\documentclass[11pt,reqno]{amsart} 
 \usepackage{amsmath,amssymb,amsthm}
 \usepackage{graphicx}
 \usepackage{cite}
 \usepackage[mathscr]{eucal}

 \theoremstyle{plain}
 \newtheorem{theorem}{Theorem}[section]  
 \newtheorem{corollary}[theorem]{Corollary}  
 \newtheorem{lemma}[theorem]{Lemma}  
 
 \newtheorem*{theorem*}{Theorem}

 \theoremstyle{plain}  
 
 \newtheorem{definition}[theorem]{Definition}  
   
 \newtheorem*{example*}{Example}

 \newtheoremstyle{citing}
   {3pt}
   {3pt}
   {\itshape}
   {}
   {\bfseries}
   {.}
   {.5em}
   {\thmnote{#3}}

 \theoremstyle{citing}


\numberwithin{equation}{section}
\allowdisplaybreaks[1]

\newlength{\intwidth}
\makeatletter
\DeclareRobustCommand{\cpvint}[2]
    {\mathop{%
       \text{%
         \settowidth{\intwidth}{%
           \ifx\ilimits@\displaylimits
             $\int_{#1}^{#2}$%
           \else
             $\int$%
           \fi}%
         \makebox[0pt][l]{\makebox[\intwidth]{$\text{C}$}}%
         $\int_{#1}^{#2}$}}}
\makeatother

\makeatletter
\DeclareRobustCommand{\cpvintsmall}[2]
    {\mathop{%
       \text{%
         \settowidth{\intwidth}{%
           \ifx\ilimits@\displaylimits
             $\int_{#1}^{#2}$%
           \else
             $\int$%
           \fi}%
         \makebox[0pt][l]{\makebox[\intwidth]{$\text{{\tiny C}}$}}%
         $\int_{#1}^{#2}$}}}
\makeatother

\newcommand{\hz}{\mathbb{H}}

\newcommand{\xpfeil}{\xrightarrow}  
  
\newcommand{\rand}{\partial} 
\newcommand{\where}{:\:}  
\newcommand{\iso}{\cong}  
  
\newcommand{\sgn}{\text{sgn}}

\newcommand{\cupl}{\mathop{\cup}\limits}  
\newcommand{\capl}{\mathop{\cap}\limits}

\newcommand{\aequi}{\Longleftrightarrow}  
  
\newcommand{\laplace}{\Delta}

\newcommand{\nz}{{\mathbb N}}

\newcommand{\rz}{{\mathbb R}}  
\newcommand{\zz}{{\mathbb Z}}  
 
\newcommand{\eps}{\varepsilon}  
\renewcommand{\phi}{\varphi} 
\newcommand{\eval}{\vert}
\newcommand{\cross}{\times}

\begin{document}
 
\title[Closed magnetic geodesics] {Closed magnetic geodesics on closed hyperbolic Riemann surfaces}
\author{Matthias Schneider}
\address{Ruprecht-Karls-Universit\"at\\
         Im Neuenheimer Feld 288\\
         69120 Heidelberg, Germany\\}
\email{mschneid@mathi.uni-heidelberg.de} 
\date{September 9, 2010}  
\keywords{prescribed geodesic curvature, periodic orbits in magnetic fields, 
closed magnetic geodesics}
\subjclass[2000]{53C42, 37J45, 58E10}

\begin{abstract}
We prove the existence of Alexandrov embedded closed magnetic geodesics on 
closed hyperbolic surfaces. Closed magnetic geodesics correspond to closed
curves with prescribed geodesic curvature.

\end{abstract}

\maketitle

\section{Introduction}
\label{sec:introduction}
Let $(M,g)$ be a compact, two dimensional, oriented manifold 
equipped with a smooth metric $g$ and
$k:M \to \rz$ a smooth positive function. We consider the following two equations
for curves $\gamma$ on $M$:
\begin{align}
\label{eq:magnetic_geodesic}
D_{t,g} \dot \gamma = k(\gamma) J_{g}(\gamma) \dot \gamma,
\end{align}
and
\begin{align}
\label{eq:1}
D_{t,g} \dot \gamma = |\dot \gamma|_{g} k(\gamma) J_{g}(\gamma) \dot \gamma,
\end{align}
where $D_{t,g}$ is the covariant derivative with respect to $g$,
and $J_g(x)$ is the rotation by $\pi/2$ in $T_xM$ measured with $g$ and 
the given orientation.\\
Equation (\ref{eq:magnetic_geodesic}) describes the motion
of a charge in a magnetic field corresponding to the magnetic form
$k dV_g$
and solutions to \eqref{eq:magnetic_geodesic}
will be called {\em ($k$-)magnetic geodesics}
(see \cite{MR890489,MR676612}).
Equation (\ref{eq:1}) corresponds to the problem of {\em prescribing geodesic curvature}, 
as its solutions $\gamma$ 
are constant speed curves with geodesic curvature $k_g(\gamma,t)$ given by $k(\gamma(t))$
(see \cite{arXiv:0808.4038}).\\
It is easy to see that a nonconstant magnetic geodesic $\gamma$ lies in a fixed energy level 
$E_c$, i.e. there is $c>0$, such that
\begin{align*}
(\gamma,\dot\gamma) \in E_c := \{(x,V) \in TM\where |V|_g=c\}.  
\end{align*}
For fixed $k$ and $c>0$ 
the equations \eqref{eq:magnetic_geodesic} and \eqref{eq:1} are equivalent in the following
sense: If $\gamma$ is a nonconstant solution of \eqref{eq:1} with $k$ replaced by
$k/c$, then the curve 
$\gamma_c(t):=\gamma(ct/|\dot \gamma|_g)$
is a $k$-magnetic geodesic in $E_c$, and a $k$-magnetic geodesic in $E_c$
solves \eqref{eq:1} with $k$ replaced by $k/c$. We emphazise that $k$-magnetic geodesics
on different energy levels are not reparameterizations of each other.\\
We study the existence of closed curves with prescribed geodesic curvature 
or equivalently the existence of periodic magnetic geodesics on prescribed energy levels 
$E_c$.\\
There is a vast literature on the existence of closed magnetic geodesics.
We limit ourselves to quote \cite{MR730159,MR1185286}
for the approach via Morse-Novikov theory
for (possibly multi-valued) variational functionals, 
\cite{MR890489,MR902290} for the
application of the theory of dynamical
systems and symplectic geometry, \cite{MR2036336} concerning Aubry-Mather's theory,
and \cite{arXiv:0808.4038}, where the theory 
of vector fields on infinite dimensional manifolds is applied to \eqref{eq:1}.
We refer to \cite{MR2036336,MR2534483,MR1432462,MR2593232} for a survey 
and additional references.\\
From the example of the horocycle flow below, closed magnetic geodesics need not
exist on a fixed energy level in general. However, from \cite{MR1185286,MR1133303,MR1432462}, 
there are always closed magnetic geodesics
in high and low energy levels, i.e. in $E_c$ with $c\ge c_0$ and $c\le (c_0)^{-1}$, 
where $c_0>0$ depends on $(M,g)$ and $k$ (in case of a flat torus and high energy levels 
$k$ is assumed
not to vanish).
If the magnetic form is exact, i.e. $[k dV_g]=0$ in $H_{dR}^2(M)$,
then there is a periodic magnetic geodesic in every energy level (see \cite{MR2036336}).
Concerning non exact magnetic forms positive functions $k$ are of special
interest, since the magnetic form is symplectic in this case. 
For $k>0$ a closed magnetic geodesic exists
in every energy level, if $(M,g)$ is a flat torus \cite{MR970068,MR786086}
or if $(M,g)$ is a sphere $S^2$ with nonnegative curvature
\cite{arXiv:0903.1128}. The (essentially) only nonexistence result 
for closed magnetic geodesics is based on an old result of
Hedlund \cite{MR1545946}. 
\begin{example*}[Horocycle flow \cite{MR1432462}]
Let $(M,g)$ be a compact hyperbolic surface of constant curvature $K_g\equiv -1$
and $k \equiv 1$.
\begin{enumerate}
\item  If $0<c<1$, then $E_c$ contains a contractible closed magnetic geodesic.
\item  There are no closed magnetic geodesics in $E_1$.
\item If $c>1$, there are no contractible closed magnetic geodesics in $E_c$,
but any non trivial free homotopy class of closed curves can be represented by one.
\end{enumerate}
\end{example*}
The existence question for closed magnetic geodesics on hyperbolic surfaces
for non constant functions $k$ is poorly understood. We shall show: If 
$(M,g)$ is a compact hyperbolic surface with Gaussian curvature $K_g \ge -1$
and $k\ge 1$ a positive function, then there is a 
contractible closed magnetic geodesics in $E_c$ for all $0<c<1$.
The example of the horocycle flow shows that this existence result is sharp.\\
We consider curves, that
are Alexandrov embedded.
\begin{definition}{(oriented Alexandrov embedded)}
\label{def:alexandrov}
Let $B \subset \rz^2$ denote the open ball of radius $1$ centered at $0\in \rz^2$.
An immersion $\gamma\in C^1(\rand B,M)$ will be called {\em oriented Alexandrov embedded},
if there is an immersion $F\in C^1(\overline{B},M)$, such that
$F\eval_{\rand B}=\gamma$ and $F$ is orientation preserving in the sense that
\begin{align*}
\langle DF\eval_{x}x, J_g(\gamma(x))\dot\gamma(x)\rangle_{T_{\gamma(x)}S^2,g}>0
\end{align*}
for all $x \in \rand B$.
\end{definition}
We shall prove
\begin{theorem}
\label{thm_existence}
Let $(M,g)$ be a smooth, compact, orientable surface with negative Euler characteristic 
and $k \in C^\infty(M)$ a positive function. Assume there is $K_0>0$ such that
$k$ and the Gaussian curvature $K_g$ of $(M,g)$ satisfy
\begin{align*}
k>(K_0)^\frac12 \text{ and } K_g\ge -K_0.  
\end{align*}
Then there is an oriented Alexandrov embedded curve $\gamma \in C^2(S^1,M)$ that
solves \eqref{eq:1} and
the number of such solutions is at least
$-\chi(M)$
provided they are all nondegenerate.
\end{theorem}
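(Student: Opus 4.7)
The plan is to realize solutions of \eqref{eq:1} as zeros of a smooth Fredholm vector field $X_k$ of index $0$ on an infinite-dimensional Banach manifold $\mathcal A$ of oriented Alexandrov embedded immersions $\gamma:S^1\to M$ modulo orientation-preserving reparameterization, following the framework the author set up in \cite{arXiv:0808.4038}. Existence and the multiplicity bound will then follow from computing a Poincaré--Hopf-type degree of $X_k$ on $\mathcal A$ and showing that, up to sign, it equals $\chi(M)$. Since $\chi(M)<0$, this invariant is nontrivial, and under nondegeneracy each zero contributes $\pm 1$, yielding at least $|\chi(M)|=-\chi(M)$ distinct solutions.

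The analytic heart of the argument is $C^2$-compactness of the zero set of $X_k$ inside $\mathcal A$, for which the hypotheses $k>K_0^{1/2}$ and $K_g\ge -K_0$ are essential. Given a solution $\gamma=F|_{\partial B}$ with filling $F\in C^1(\overline B,M)$, pulling back $g$ by $F$ and applying Gauss--Bonnet gives
\[
2\pi=\int_B K_g\!\circ\! F\,dA_{F^*g}+\int_{\partial B}k\!\circ\!\gamma\,ds\ge -K_0\,\mathrm{Area}(F)+K_0^{1/2}L(\gamma),
\]
so $K_0^{1/2}L(\gamma)\le 2\pi+K_0\,\mathrm{Area}(F)$. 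Combined with an isoperimetric-type inequality on the simply connected domain $(B,F^*g)$ (the curvature bound $K_{F^*g}\ge -K_0$ allows a Bol-type comparison with the $-K_0$-model), this bounds both $L(\gamma)$ and $\mathrm{Area}(F)$ uniformly. A lower bound on $|\dot\gamma|_g$ comes from $k>0$, and standard elliptic regularity for \eqref{eq:1} then upgrades this to $C^2$ precompactness.

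With compactness in place, I would compute the degree by homotopy. Deform $k$ through the family $k_s=(1-s)k+sk_\infty$ with $k_\infty$ a large constant; throughout the homotopy $k_s>K_0^{1/2}$, so the bounds above are uniform in $s$ and the degree of $X_{k_s}$ is invariant. For $k_\infty\gg 1$ the solutions of \eqref{eq:1} are curves of diameter $O(1/k_\infty)$ which, after rescaling, converge to round circles in tangent spaces $T_pM$; thus the zero set of $X_{k_\infty}$ forms a Morse--Bott family diffeomorphic to $M$, and its normal bundle in $\mathcal A$ can be identified with $TM$. Hence $\deg(X_k,\mathcal A)=\pm\chi(M)$, and the multiplicity bound follows from Poincaré--Hopf.

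The main obstacle I expect is controlling the boundary of $\mathcal A$ along the homotopy: Alexandrov embeddedness can fail through degeneration of $F$ (tangential self-intersection, loss of the orientation condition in Definition~\ref{def:alexandrov}, or blow-up of $\mathrm{Area}(F)$), and without such control the homotopy invariance of the degree is lost. Preventing this is where the sharp condition $k^2>K_0\ge -K_g$ enters beyond Gauss--Bonnet: it yields a pointwise convexity along the inward normal geodesics to $\gamma$ which is preserved under the deformation $s\mapsto k_s$. Verifying this persistence---so that no nondegenerate solution escapes to $\partial\mathcal A$ along the homotopy---is the delicate step; the hyperbolicity assumption $\chi(M)<0$ is used only in identifying the limiting degree with $\chi(M)$, not in the compactness part.
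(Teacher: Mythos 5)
Your overall strategy matches the paper's: interpret solutions as zeros of a vector field, prove compactness of the zero set via Gauss--Bonnet plus an isoperimetric estimate, and then compute a Poincar\'e--Hopf type degree by homotopy to a large constant $k$. However, there is a genuine gap at the crucial compactness step, namely the appeal to ``a Bol-type comparison with the $-K_0$-model'' under the hypothesis $K_{F^*g}\ge -K_0$. Bol's isoperimetric inequality $L^2\ge 4\pi A-\kappa A^2$ requires an \emph{upper} bound $K\le\kappa$ on the curvature; to get the estimate you want, $L^2\ge 4\pi A+K_0A^2$ (so that $A\le L/K_0^{1/2}$), you would need $K\le -K_0$, which is the opposite of what is assumed. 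With only $K\ge -K_0$ the inequality $L^2\gtrsim A^2$ can fail badly: the disk $(B,F^*g)$ may have large positive curvature somewhere, and an immersed filling $F$ can have arbitrarily large area compared to $L(\gamma)$. This is precisely why the paper does not apply the isoperimetric inequality to $(B,F^*g)$ directly. Instead (Lemma~\ref{lem_soln_prescribed_gauss}), it solves a prescribed Gauss curvature problem
\begin{align*}
-\laplace_{F^*g}\phi + K_{F^*g}+K_0 e^\phi = 0 \text{ in } B,\qquad \rand_\nu\phi=0 \text{ on }\rand B,
\end{align*}
by the method of upper and lower solutions (upper solution $\phi_+\equiv 0$, thanks to $K_{F^*g}+K_0\ge 0$; lower solution built from a Green's function on $M$), obtaining a conformal factor with \emph{uniform} bounds $-C_0\le\phi\le 0$. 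The conformal metric $h=e^\phi F^*g$ then has constant curvature $-K_0$, so the isoperimetric inequality applies to $(B,h)$, and the uniform bound on $\phi$ transfers the resulting length/area control back to $F^*g$. Without this conformal normalization, the chain of inequalities $2\pi\ge -K_0A + k_{\inf}L$ and $A\le L/K_0^{1/2}$ (hence $L\le 2\pi/(k_{\inf}-K_0^{1/2})$) cannot be closed.

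A second, more structural issue concerns your degree computation at large constant $k_\infty$. You keep the metric $g$ fixed and claim the zero set of $X_{k_\infty}$ is an exact Morse--Bott family diffeomorphic to $M$; for a general $g$ this is only approximately true, and the zero set need not be a smooth manifold. The paper sidesteps this by invoking uniformization to write $(M,g)=(\hz/\Gamma, e^\phi g_0)$, performing the degree computation for the hyperbolic metric $g_0$ (where the $SO(2,1)_+$-symmetry makes the zero set an honest three-dimensional critical manifold $\mathcal{Z}_M$ of $S^1$-orbits), carrying out a Lyapunov--Schmidt reduction perturbed by a Morse function $k_1$ to identify the degree with $-\chi(M)$, and only then homotoping the metric from $g_0$ to $g$ (with $k$ held at the large constant) and subsequently homotoping $k$ from the constant to the given function. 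Your parenthetical that ``the hyperbolicity assumption $\chi(M)<0$ is used only in identifying the limiting degree with $\chi(M)$'' understates its role: it also supplies the rigid model metric $g_0$ on which the critical manifold and the local degree can be computed explicitly. Finally, the control over $\partial\mathcal{A}$ you worry about is, in the paper, handled not via ``convexity along inward normal geodesics'' but by the closedness of oriented Alexandrov embeddedness under $C^2$-limits with strictly positive geodesic curvature (Lemma~\ref{lem:basis_alexandrov}), which applies because $k(\gamma)>0$ uniformly along the homotopy.
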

The equivalence between (\ref{eq:magnetic_geodesic}) and (\ref{eq:1}) leads to
\begin{corollary}
\label{cor_existence}
Let $(M,g)$ be a smooth, compact, orientable surface with negative Euler characteristic 
and $k \in C^\infty(M)$ a positive function. Assume there is $K_0>0$ such that
$k$ and the Gaussian curvature $K_g$ of $(M,g)$ satisfy
\begin{align*}
k\ge(K_0)^\frac12 \text{ and } K_g\ge -K_0.  
\end{align*}
Then every energy level $E_c$ with $0<c<1$
contains an oriented Alexandrov embedded closed magnetic geodesic
and the number of such closed magnetic geodesics in $E_c$ is at least $-\chi(M)$
provided they are all nondegenerate.
\end{corollary}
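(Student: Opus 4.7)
The plan is to deduce Corollary \ref{cor_existence} directly from Theorem \ref{thm_existence} using the equivalence between equations \eqref{eq:magnetic_geodesic} and \eqref{eq:1} recorded in the introduction. Fix $c \in (0,1)$ and consider the rescaled function $\tilde k := k/c$. I want to verify that the hypotheses of Theorem \ref{thm_existence} hold for the pair $(\tilde k, g)$ with the same constant $K_0$. The curvature bound $K_g \ge -K_0$ is unchanged, and since $c<1$ strictly, the assumption $k \ge (K_0)^{1/2}$ improves to
\begin{align*}
\tilde k \;=\; \frac{k}{c} \;\ge\; \frac{(K_0)^{1/2}}{c} \;>\; (K_0)^{1/2},
\end{align*}
so the strict lower bound required by Theorem \ref{thm_existence} is indeed available. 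This is the step where the gap between the non-strict hypothesis $k \ge (K_0)^{1/2}$ of the corollary and the strict hypothesis $k>(K_0)^{1/2}$ of the theorem gets absorbed into the factor $1/c$.

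Applying Theorem \ref{thm_existence} to $\tilde k$ produces an oriented Alexandrov embedded curve $\gamma \in C^2(S^1,M)$ solving
\begin{align*}
D_{t,g}\dot\gamma \;=\; |\dot\gamma|_g \,\tilde k(\gamma)\, J_g(\gamma)\dot\gamma.
\end{align*}
Then I pass to the reparameterization $\gamma_c(t) := \gamma(ct/|\dot\gamma|_g)$ as explained in the introduction: $\gamma_c$ is a $k$-magnetic geodesic lying in the energy level $E_c$. Since Alexandrov embeddedness depends only on the image curve together with its filling disk immersion $F$, and not on the parameterization on $\partial B$, the curve $\gamma_c$ inherits the oriented Alexandrov embedded property from $\gamma$.

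For the multiplicity statement, I use that the correspondence $\gamma \mapsto \gamma_c$ is a bijection between oriented Alexandrov embedded solutions of \eqref{eq:1} for $\tilde k$ and oriented Alexandrov embedded $k$-magnetic geodesics in $E_c$, and that this bijection is compatible with the notion of nondegeneracy (a reparameterization of the flow preserves the rank of the linearized Poincar\'e return map). Hence, if all such closed magnetic geodesics in $E_c$ are nondegenerate, the same is true for the corresponding solutions of \eqref{eq:1} with $\tilde k$, and Theorem \ref{thm_existence} yields at least $-\chi(M)$ of them.

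The main (and essentially only) point to check is the compatibility of nondegeneracy under the time reparameterization $t \mapsto ct/|\dot\gamma|_g$; this is routine since the two equations differ only by a positive constant time rescaling on each solution, so I expect no substantive obstacle here and the corollary follows from the theorem in a few lines.
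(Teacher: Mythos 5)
Your proof is correct and is precisely the argument the paper intends: the paper gives no explicit proof of the corollary, stating only that it follows from the equivalence between \eqref{eq:magnetic_geodesic} and \eqref{eq:1}, and your proof fills in exactly that deduction. The one genuinely nontrivial observation — that rescaling $k \mapsto k/c$ with $c<1$ converts the non-strict hypothesis $k\ge (K_0)^{1/2}$ into the strict bound $k/c>(K_0)^{1/2}$ required by Theorem \ref{thm_existence} — is correctly identified and handled.
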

The proof of our existence results is organized as follows.
We consider solutions to \eqref{eq:1} as zeros of 
the vector field $X_{k,g}$ defined on the Sobolev space $H^{2,2}(S^1,M)$:
For $\gamma \in H^{2,2}(S^1,M)$ we let $X_{k,g}(\gamma)$ be the unique weak solution of   
\begin{align}
\label{eq:def_vector_field}
\big(-D_{t,g}^{2} + 1\big)X_{k,g}(\gamma)= 
-D_{t,g} \dot\gamma + |\dot \gamma|_{g} k(\gamma)J_{g}(\gamma)\dot\gamma   
\end{align}
in $T_\gamma H^{2,2}(S^1,M)$.  
The uniqueness implies that any zero of $X_{k,g}$ is a weak solution of \eqref{eq:1}
which is a classical solution in $C^{2}(S^1,M)$ applying standard regularity
theory.\\ 
After setting up notation in Section \ref{s:preliminiaries} and introducing the classes of maps
and spaces needed for our analysis 
we recall in Section \ref{sec:s1_euler}
the definition and properties of the $S^1$-equivariant Poincar\'{e}-Hopf index
defined in \cite{arXiv:0808.4038}, 
$$\chi_{S^1}(X_{k,g},\mathscr{M}_A) \in \zz,$$ 
where
$\mathscr{M}_A$ is the set of oriented Alexandrov embedded regular curves
in $H^{2,2}(S^1,M)$.\\
From the uniformization theorem 
$(M,g)$
is isometric to $(\hz/\Gamma,e^\phi g_0)$,
where $\Gamma$ is a group
of isometries of the standard hyperbolic plane $(\hz,g_0)$ acting
freely and properly discontinuously and $\phi$ is a function in
$C^\infty(\hz/\Gamma,\rz)$.
Since the problem of prescribing
geodesic curvature is invariant under isometries we may assume
without loss of generality that
\begin{align*}
(M,g) = (\hz/\Gamma,e^\phi g_0). 
\end{align*}
In Section \ref{sec:unperturbed} we analyze the {\em unperturbed problem} 
with $k\equiv k_0>0$ and $g=g_0$:
We compute the set of oriented Alexandrov embedded zeros of $X_{k_0,g_0}$
and the image and kernel of the corresponding linearizations.
The perturbative analysis in Section \ref{sec:perturbed}, which carries
over from \cite{arXiv:0808.4038}, is used to compute the degree
of the unperturbed problem in Section \ref{sec:computation_degree}:   
For large positive constants $k_0$ and the standard metric $g_0$
we shall show that
\begin{align*}
\chi_{S^1}(X_{k_0,g_0},\mathscr{M}_A)=-\chi(M),  
\end{align*}
where $\chi(M)$ denotes the Euler characteristic of $M$.\\
Section \ref{sec:apriori-estimate} contains the apriori estimate which implies
that under the assumptions of Theorem \ref{thm_existence}
the set of solutions to \eqref{eq:1} is compact in $\mathscr{M}_A$.
The homotopy invariance of the $S^{1}$-equivariant Poincar\'{e}-Hopf index then leads
to the identity
\begin{align*}
\chi_{S^1}(X_{k,g},\mathscr{M}_A)=\chi_{S^1}(X_{k_0,g_{can}},\mathscr{M}_A)=-\chi(M).  
\end{align*}
The resulting proof of Theorem \ref{thm_existence} is given in Section \ref{sec:existence}.

\section{Preliminaries}
\label{s:preliminiaries}
It is convenient for the functional analytic setting to assume that $M$ is embedded
in some $\rz^{q_M}$.
We consider for $m\in \nz_0$ the set of Sobolev functions
\begin{align*}
H^{m,2}(S^{1},M) := \{\gamma \in H^{m,2}(S^{1},\rz^{q_M}) \where \gamma(t) \in M 
\text{ for a.e. } t \in S^{1}.\}   
\end{align*}
For $m \ge 1$ the set
$H^{m,2}(S^{1},M)$ is a sub-manifold of the Hilbert space $H^{m,2}(S^{1},\rz^{q_M})$
and is contained in $C^{m-1}(S^{1},\rz^{q_M})$.
Hence, if $m\ge 1$ then $\gamma \in H^{m,2}(S^{1}, M)$ 
satisfies 
$\gamma(t) \in M$ for all $t\in S^{1}$. 
In this case the tangent space 
at $\gamma \in H^{m,2}(S^{1}, M)$ is given by
\begin{align*}
T_\gamma H^{m,2}(S^{1}, M) := 
\{V \in H^{m,2}(S^{1},\rz^{q_M})\where V(t) \in T_{\gamma(t)}M \text{ for all }t \in S^{1}\}.  
\end{align*}
For $m=0$ the set $H^{0,2}(S^{1},M)=L^{2}(S^{1},M)$ fails to be a manifold. 
We define for $\gamma \in H^{1,2}(S^{1},M)$ the space 
$T_\gamma L^{2}(S^{1},M)$ by
\begin{align*}
T_\gamma L^{2}(S^{1},M):= 
\{V \in L^{2}(S^{1},\rz^{q_M})\where V(t) \in T_{\gamma(t)}M \text{ for a.e. }t \in S^{1}\}.  
\end{align*}
A metric $g$ on $M$ induces a metric on $H^{m,2}(S^{1},M)$ for $m\ge 1$ 
by setting for $\gamma \in H^{m,2}(S^{1},M)$
and $V,\,W \in T_\gamma H^{m,2}(S^{1}, S^{2})$
\begin{align*}
\langle W,V\rangle_{T_\gamma H^{m,2}(S^{1}, S^{2}),g} 
:= \int_{S^{1}}\Big\langle &\big((-1)^{\llcorner \frac{m}{2}\lrcorner}(D_{t,g})^{m}+1\big)V(t),\\
&\big((-1)^{\llcorner \frac{m}{2}\lrcorner}(D_{t,g})^{m}+1\big)W(t)\Big\rangle_{\gamma(t),g}\, dt,   
\end{align*}
where $\llcorner m/2\lrcorner$ denotes the largest integer that does not exceed $m/2$.\\
Since $g$ and $k$ are smooth, $X_{k,g}$ is a smooth vector field (see
\cite[Sec. 6]{arXiv:0808.4038,MR0464304}) 
on the set $H^{2,2}_{reg}(S^{1},M)$ of regular curves,
\begin{align*}
H^{2,2}_{reg}(S^{1},M) 
:= \{\gamma \in H^{2,2}(S^{1},M)\where \dot\gamma(t) \neq 0 \text{ for all }t\in S^{1}\}. 
\end{align*}
From \cite{arXiv:0808.4038} there holds
\begin{align}
\label{eq:dg_x_g_formula}
\big(-&D_{t,g}^2+1\big)D_{g} X_{k,g}\eval_\gamma(V) \notag\\
&= -D_{t,g}^2 V - R_g\big(V,\dot\gamma \big)\dot\gamma
+ |\dot\gamma|_g^{-1}\langle D_{t,g}V,\dot\gamma\rangle_g k(\gamma)J_{g}(\gamma)\dot\gamma
 \notag \\
&\quad +|\dot\gamma|_g \big(k'(\gamma)V\big)J_{g}(\gamma)\dot\gamma
+ |\dot\gamma|_g k(\gamma) 
\Big(\big(D_gJ_{g}\eval_{\gamma}V\big)\dot\gamma
+J_{g}(\gamma) D_{t,g}V\Big). 
\end{align}
We note that (see also \cite[Thm. 6.1]{MR493919})
\begin{align*}
\big(-D_{t,g}^2+1\big)D_{g} X_{k,g}\eval_\gamma(V) = (-D_{t,g}^2 +1)V +T(V),   
\end{align*}
where $T$ is a linear map from $T_\gamma H^{2,2}(S^{1},M)$ 
to $T_\gamma L^{2}(S^{1},M)$ that
depends only on the first derivatives of $V$ and is therefore compact.
Taking the inverse $(-D_{t,g}^2+1)^{-1}$ we deduce that $D_{g} X_{k,g}\eval_\gamma$
is the form $identity + compact$ and thus a Rothe map (see \cite{arXiv:0808.4038}).\\
The vector field $X_{k,g}$ as well as the set of solutions to \eqref{eq:1}
is invariant under a circle action:
For $\theta \in S^{1}=\rz/\zz$ and $\gamma \in H^{2,2}(S^1,M)$
we define $\theta*\gamma \in H^{2,2}(S^1,M)$
by 
\begin{align*}
\theta*\gamma(t) = \gamma(t+\theta).  
\end{align*}
Moreover, for $V \in T_\gamma H^{2,2}(S^1,M)$ we let
\begin{align*}
\theta*V := V(\cdot+\theta) \in T_{\theta*\gamma} H^{2,2}(S^1,M).  
\end{align*}
Then $X_{k,g}(\theta * \gamma) = \theta*X_{k,g}(\gamma)$ for any $\gamma \in H^{2,2}(S^1,M)$
and $\theta\in S^{1}$. Thus, any zero gives rise to a $S^{1}$-orbit of zeros.
We call $\gamma$ a {\em prime} curve, if
the isotropy group $\{\theta \in S^1\where \theta*\gamma=\gamma\}$ of $\gamma$ is trivial.\\
For $m\ge 1$ the exponential map $Exp_{g}: TH^{m,2}(S^{1},M) \to H^{m,2}(S^{1},M)$
is defined for $\gamma \in H^{m,2}(S^{1},M)$ and $V \in T_\gamma H^{m,2}(S^{1},M)$ by
\begin{align*}
Exp_{\gamma,g}(V)(t) := Exp_{\gamma(t),g}(V(t)),  
\end{align*}
where $Exp_{z,g}$ denotes the exponential map on $(M,g)$ at $z \in M$. 
Due to its pointwise definition
\begin{align*}
\theta*Exp_{\gamma,g}(V)(t) = Exp_{\theta*\gamma,g}(\theta*V)(t).    
\end{align*}
We shall find solutions to \eqref{eq:1} in the class of oriented Alexandrov
embedded curves. Let $\gamma \in H^{2,2}(S^1,M)$ be an oriented Alexandrov
embedded curves with corresponding oriented immersion $F$ from $B$ to $M$.
If we equip $B$ with the metric $F^*g$ induced by $F$, then the outer normal $N_B(x)$
at $x \in \rand B$ with respect to $F^*g$ satisfies
\begin{align*}
DF\eval_x N_B(x) = N_\gamma(x)
\end{align*}
where $N_\gamma(x)$ denotes the normal to the curve $\gamma$ at $x \in \rand B$
defined by
\begin{align*}
N_\gamma(x):= |\dot\gamma(x)|^{-1}J_g(\gamma(x))\dot\gamma(x).  
\end{align*}
In \cite{arXiv:0903.1128} the following two basic properties
of oriented Alexandrov embedded curves are shown.
\begin{lemma}
\label{lem:basis_alexandrov}
$ $
\begin{enumerate}
\item Let $(\gamma_n)$ in $C^2(\rand B,M)$ be a sequence of immersions,
which are oriented Alexandrov embedded, such that $(\gamma_n)$ converges 
to an immersion $\gamma_0$ in $C^2(\rand B, M)$ with strictly positive
geodesic curvature.
Then $\gamma_0$ is oriented Alexandrov embedded.
\item The set of regular, oriented Alexandrov embedded curves is open in
$H^{2,2}(S^1,M)$.   
\end{enumerate}
\end{lemma}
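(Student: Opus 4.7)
The plan is to treat part (2) first, as it reduces to a straightforward perturbative construction, and then to attack part (1) by a compactness argument for the filling immersions.

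For part (2) I would rely on the continuous Sobolev embedding $H^{2,2}(S^1,M)\hookrightarrow C^1(S^1,M)$ in order to upgrade $H^{2,2}$-closeness to $C^1$-closeness. Let $\gamma_0$ be a given regular oriented Alexandrov embedded curve with filling immersion $F_0\in C^1(\overline B,M)$ as in Definition \ref{def:alexandrov}, and let $\gamma$ be close to $\gamma_0$ in $H^{2,2}$. Setting $W(x):=Exp_{\gamma_0(x),g}^{-1}(\gamma(x))$ for $x\in\rand B$ gives a vector field along $\gamma_0$ which is small in $C^1$. I would then extend $W$ to a small $C^1$ section $\widetilde W$ of $F_0^{*}TM$ over $\overline B$ (trivializing the bundle and taking, say, a harmonic extension of the trivialized boundary data), and define
\begin{align*}
F_\gamma(x) := Exp_{F_0(x),g}(\widetilde W(x)),\qquad x\in\overline B.
\end{align*}
By construction $F_\gamma\in C^1(\overline B,M)$ and $F_\gamma|_{\rand B}=\gamma$. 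Because $F_0$ is an immersion on the compact set $\overline B$, its differential is uniformly of full rank; since $DF_\gamma$ differs from $DF_0$ by an amount controlled by $\|\widetilde W\|_{C^1}$, $F_\gamma$ remains an immersion for sufficiently small $\widetilde W$. The strict inequality $\langle DF_0|_x x,\, J_g(\gamma_0(x))\dot\gamma_0(x)\rangle>0$ is likewise preserved under small $C^1$ perturbations, which is exactly the openness assertion.

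For part (1) I would build the required filling $F_0$ of $\gamma_0$ as a limit of the $F_n$. The boundary values $\gamma_n$ converge in $C^2$, giving uniform $C^1$-control of $F_n$ on $\rand B$; the compactness of $M$ makes the $F_n$ uniformly bounded. To propagate control into the interior, I would analyze the pulled-back metrics $F_n^{*}g$ on $\overline B$: these have Gaussian curvatures uniformly bounded (inherited from $(M,g)$) and boundary geodesic curvatures along $\rand B$ equal to $k_g(\gamma_n,\cdot)$, converging to the strictly positive function $k_g(\gamma_0,\cdot)$. Uniformizing $(\overline B, F_n^{*}g)$ as $(\overline B, e^{\omega_n}g_{eucl})$ produces reparameterizations $\widetilde F_n$ that are conformal immersions whose log-conformal factors $\omega_n$ satisfy a Liouville-type elliptic equation with uniformly controlled source and boundary data. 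Standard Schauder estimates together with Arzel\`a--Ascoli would then yield a $C^{1,\alpha}$-convergent subsequence $\widetilde F_n\to\widetilde F_0$; the strict positivity of the boundary curvature rules out rank collapse up to $\rand B$, so the limit stays an immersion, and the orientation inequality of Definition \ref{def:alexandrov} is closed under $C^1$-convergence.

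The main obstacle is part (1): a priori a $C^2$-converging sequence of immersed boundary curves can be filled by immersions $F_n$ with arbitrarily wild interior behavior (many folds, unbounded area, or a rank drop in the limit), and there is no automatic compactness of $F_n$. The strict positivity of $k_g(\gamma_0,\cdot)$, combined with the curvature bound on $(M,g)$, is precisely the input that feeds into the uniformization step and keeps the pulled-back geometry and its reparameterizations controllable; pushing that compactness argument through, especially up to the boundary where the conformal factor has to be controlled by the boundary curvature, is where the bulk of the technical work would sit.
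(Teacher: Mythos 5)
The paper itself does not prove this lemma: it is quoted from \cite{arXiv:0903.1128} (the author's earlier paper on $S^2$), with the remark that the arguments there are carried out in tubular neighborhoods of the boundary curve and hence transfer verbatim to a general surface $M$. So the comparison should be with that reference's argument, about which the present paper only tells us that it is local near $\rand B$.

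Your part (2) is sound and is essentially the standard openness argument. The ingredients you need --- $H^{2,2}(S^1,M)\hookrightarrow C^{1,1/2}(S^1,M)$, $C^1$-triviality of $F_0^{*}TM$ over the contractible disk (e.g.\ by parallel transport along rays), Schauder control of the harmonic extension, and the fact that \emph{immersion} and the strict orientation inequality of Definition~\ref{def:alexandrov} are $C^1$-open --- are all in place, and $F_\gamma=Exp_{F_0,g}(\widetilde W)$ is a legitimate filling of $\gamma$ for $\|\gamma-\gamma_0\|_{H^{2,2}}$ small.

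Your part (1), by contrast, attempts something substantially stronger and more global than what the lemma asks for, and the central difficulty is left open. You propose to extract a convergent subsequence of the fillings $F_n$ themselves, via the pullback metrics and uniformization. You correctly flag that there is no a priori compactness, but then assert without justification that positivity of $k_g(\gamma_0)$ together with the ambient curvature bound resolves it. That is where the argument actually has to happen, and the proposed route does not obviously close. Even granting the area bound that one \emph{can} extract (since an immersion of the disk has constant Jacobian sign, Stokes' theorem on a lift to $\widetilde M$ shows $\text{Area}(B,F_n^{*}g)$ is determined by $\widetilde\gamma_n$ alone, hence bounded along the sequence), what you then need is uniform control of the log-conformal factor $\omega_n$ on $\overline B$, up to the boundary. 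The Liouville equation $-\laplace\omega_n = K_n e^{2\omega_n}$ with $K_n$ merely bounded and $\int e^{2\omega_n}$ bounded is exactly the critical setting where concentration is possible, and strict positivity of the \emph{boundary} geodesic curvature gives no information in the interior; the bubbling analysis you would need is not sketched, and the reparametrizations $\widetilde F_n$ must also be shown to converge in a way compatible with the fixed boundary parametrization so that the orientation inequality survives. None of this is addressed.

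The localization hint in the paper points to a much more economical argument, which is really part (2) run in reverse: fix a single $n$ large, so that $\gamma_0$ is $C^2$-close to $\gamma_n$, and \emph{modify $F_n$ only in a tubular neighborhood of $\rand B$} to produce a filling of $\gamma_0$. No limit of the $F_n$'s is taken and nothing about the interior of the $F_n$ is needed. What must be controlled is the behavior of $F_n$ near $\rand B$ (so that the small boundary perturbation does not destroy the immersion property there), and this is where the strict positivity of $k_g(\gamma_0)$ enters --- not as an input to a global uniformization estimate, as you suggest, but as a lower bound preventing degeneration of the boundary collar of a filling. That is a genuinely different mechanism from the one you propose, and the gap in your proposal is precisely the global compactness you would have to supply to avoid it.
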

Property $(1)$ and $(2)$ are given in \cite{arXiv:0903.1128} for closed
curves in $S^2$. Since the analysis in the 
proof of $(1)$ and $(2)$ is done in tubular neighborhoods
of closed curves, properties $(1)$ and $(2)$ continue to hold
if $S^2$ is replaced by a general surface $M$.  

\section{The $S^{1}$-Poincar\'{e}-Hopf index}
\label{sec:s1_euler}
In \cite{arXiv:0808.4038} a $S^1$-equivariant Poincar\'{e}-Hopf index
or $S^{1}$-degree is introduced for equivariant vector fields on subsets
of $H^{2,2}(S^{1},S^{2})$. The $S^{1}$-degree is based on an
equivariant version of the Sard-Smale lemma \cite[Lem 3.9]{arXiv:0808.4038},
which depends on an appropriate change of a vector field locally around its critical orbits.
It's merely a matter of form to extend this local argument, when $S^{2}$ is replaced
by a general surface $M$. We give a short account of the definition
and properties of the $S^{1}$-degree for equivariant vector fields on subsets
of $H^{2,2}(S^{1},M)$.\\
We define a $C^2$ equivariant vector field $W_g$ on $H^{2,2}(S^1,M)$ by
\begin{align*}
W_{g}(\gamma) = (-(D_{t,g})^2 + 1)^{-1} \dot \gamma, \text{ for } \gamma \in H^{2,2}(S^1,M).  
\end{align*}
We will compute the $S^1$-Poincar\'{e}-Hopf index for the following class of vector fields.
\begin{definition}
Let $\mathscr{M}$ be an open 
$S^1$-invariant subset of prime curves in $H^{2,2}(S^{1},M)$.
A $C^2$ vector field $X$ on $\mathscr{M}$ is called $(\mathscr{M},g,S^{1})$-admissible, if
\begin{itemize}
\item[(1)] $X$ is $S^1$-equivariant, i.e. $X(\theta*\gamma)=\theta*X(\gamma)$ for all
$(\theta,\gamma)\in S^1\times \mathscr{M}$.
\item[(2)] $X$ is proper in $\mathscr{M}$, i.e. the set 
$\{\gamma \in \mathscr{M} \where X(\gamma)=0 \}$ is compact,
\item[(3)] $X$ is orthogonal to $W_{g}$, i.e. 
$\langle X(\gamma), W_g(\gamma)\rangle_{T_\gamma H^{2,2}(S^1,M)}=0$ 
for all $\gamma \in \mathscr{M}$.
\item[(4)] $X$ is a Rothe field, i.e. 
if $X(S^{1}*\gamma)=0$ then $D_{g}X\eval_\gamma$ and 
$\text{Proj}_{\langle W_g(\gamma)\rangle^\perp} \circ D_{g}X\eval_\gamma$
are Rothe maps in $\mathscr{L}(T_\gamma H^{2,2}(S^1,M))$ and
$\mathscr{L}(\langle W_g(\gamma)\rangle^\perp)$, respectively.
\item[(5)] $X$ is elliptic, i.e. there is $\eps>0$ such that  
for all finite sets of charts 
\begin{align*}
\{(Exp_{\gamma_i,g},B_{2\delta_i}(0))\where \gamma_i\in H^{4,2}(S^1,M)
\text{ for }1\le i\le n\},
\end{align*}
and finite sets
\begin{align*}
\{W_i \in T_{\gamma_i}H^{4,2}(S^1,M)\where \|W_i\|_{T_{\gamma_i}H^{4,2}(S^1,M)}<\eps 
\text{ for }1\le i\le n\}, 
\end{align*}
there holds: If
$\alpha \in \capl_{i=1}^n Exp_{\gamma_i,g}(B_{\delta_i}(0))\subset H^{2,2}(S^1,M)$
satisfies
\begin{align*}
X(\alpha) = \sum_{i=1}^n \text{Proj}_{\langle W_g(\alpha)\rangle^\perp}\circ
D Exp_{\gamma_i,g}\eval_{Exp_{\gamma_i,g}^{-1}(\alpha)}(W_i)  
\end{align*}
then $\alpha$ is in $H^{4,2}(S^1,M)$.
\end{itemize}
\end{definition}
It is shown in \cite{arXiv:0808.4038} that $X_{k,g}$ satisfies properties
$(3)-(4)$. Hence, $X_{k,g}$ is $(\mathscr{M},g,S^{1})$-admissible if and only
if $X_{k,g}$ is proper in $\mathscr{M}$.
Note that the regularity property $(5)$, taking $W_i=0$, shows that
any zero of $X$ belongs to $H^{4,2}(S^1,M)$. 
Furthermore, for $\gamma \in H^{4,2}(S^1,M)$ the map
$\theta \mapsto \theta*\gamma$ is $C^2$ from $S^1$ to $H^{2,2}(S^{1},M)$. Hence, if
$X(\gamma)=0$ then
\begin{align*}
0 &= D_\theta(X(\theta*\gamma))\eval_{\theta=0} = D_{g}X\eval_\gamma (\dot\gamma),
\end{align*}
such that the kernel of $D_{g}X\eval_\gamma$ is nontrivial.
If $X$ is a vector field orthogonal to $W_{g}$ 
and $X(\gamma)=0$, then
\begin{align*}
0 &= D\big(\langle X(\alpha),W_g(\alpha)\rangle_{T_\alpha H^{2,2}(S^{1},M),g}\big)\eval_\gamma
= \langle D_{g}X\eval_\gamma,W_g(\gamma)\rangle_{T_\gamma H^{2,2}(S^{1},M),g}
\end{align*}
where the various curvature terms and terms containing derivatives 
of $W_g$ vanish as $X(\gamma)=0$.  
Thus, $X(\gamma)=0$ implies
\begin{align}
\label{eq:dx_to_wg_perp}
D_{g}X\eval_\gamma:\:T_\gamma H^{2,2}(S^{1},M) \to \langle W_g(\gamma)\rangle^{\perp},
\end{align}
and the projection $\text{Proj}_{\langle W_g(\gamma)\rangle^{\perp}}$ in $(4)$ is unnecessary.
\begin{definition}
\label{d:critical_orbit}
Let $\mathscr{M}$ be an open 
$S^1$-invariant subset of prime curves in $H^{2,2}(S^{1},M)$, $S^{1}*\gamma \subset \mathscr{M}$,
and $X$ a $(\mathscr{M},g,S^{1})$-admissible vector field on $\mathscr{M}$.\\  
The orbit $S^{1}*\gamma$ is called a {\em critical
orbit} of $X$, if $X(\gamma)=0$.\\ 
The orbit $S^{1}*\gamma$ is called a {\em nondegenerate critical
orbit} of $X$, if $X(\gamma)=0$ and 
\begin{align*}
D_{g}X\eval_\gamma:\: \langle W_{g}(\gamma)\rangle^{\perp} 
\xpfeil{}{} \langle W_{g}(\gamma)\rangle^{\perp}   
\end{align*}
is an isomorphism.
\end{definition}
Note that if $\gamma \in H^{4,2}(S^1,M)\subset H^{2,2}(S^1,M) $ then 
$\dot\gamma \not\in \langle W_{g}(\gamma)\rangle^{\perp}$. 
\begin{definition}
\label{d:M_g_t_s_1_homotopy}  
Let $\{g_t \where t \in [0,1]\}$ be a family of smooth metrics on $M$, which induces
a corresponding family of metrics on $H^{2,2}(S^1,M)$, still denoted by $g_t$. 
Let $\mathscr{M}$ be an open 
$S^1$-invariant subset of prime curves in $H^{2,2}(S^{1},M)$ and $X_0$, $X_1$ 
two vector-fields on $\mathscr{M}$ such that
$X_i$ is $(\mathscr{M},g_i,S^1)$-admissible for $i=0,1$.
A $C^2$ family of vector-fields $X(t,\cdot)$ on $\mathscr{M}$ for $t\in [0,1]$ is called 
a $(\mathscr{M},g_t,S^1)$-homotopy between $X_0$ and $X_1$, if
\begin{itemize}
\item $X(0,\cdot)=X_0$ and $X(1,\cdot)=X_1$,
\item $\{(t,\gamma)\in[0,1]\times \mathscr{M} \where X(t,\gamma)=0\}$ is compact,
\item $X_t:= X(t,\cdot)$ is $(\mathscr{M},g_t,S^1)$-admissible for all $t\in [0,1]$.
\end{itemize}
We write $(\mathscr{M},g,S^{1})$-homotopy, 
if the family of metrics $\{g_t\}$ is constant.
\end{definition}
Note that,
if $\{k_t \in C^\infty(M,\rz)\where t \in [0,1]\}$ is a $C^2$ family of smooth function,
then $t \mapsto X_{k_t,g_t}$ is a $(\mathscr{M},g_t,S^1)$-homotopy, if and only if
the set 
\begin{align*}
\{(t,\gamma)\in[0,1]\times \mathscr{M} \where X_{k_t,g_t}(\gamma)=0\}
\end{align*}
is compact.\\
We let $\mathscr{M}$ be an open 
$S^1$-invariant subset of prime curves in $H^{2,2}(S^{1},M)$ and $X$
a $(\mathscr{M},g,S^{1})$-admissible vector field on $\mathscr{M}$.
The local $S^1$-degree of an isolated, nondegenerate critical orbit $S^1*\gamma_0$
is defined by
\begin{align*}
\deg_{loc,S^1}(X,S^1*\gamma_0):=\sgn D_g X\eval_{\gamma_0},
\end{align*}
where $\sgn D_g X\eval_{\gamma_0}$ is the sign of the Rothe map $D_g X\eval_{\gamma_0}$
in $\mathscr{L}(\langle W_{g}(\gamma)\rangle^{\perp})$. Since $D_g X_{k,g}\eval_{\gamma_0}$
is of the form $identity+compact$, in the above situation $\sgn D_g X_{k,g}\eval_{\gamma_0}$
is given by the usual Leray-Schauder degree.\\
Using an equivariant version of the Sard-Smale lemma a $S^1$-equivariant Poincar\'{e}-Hopf index
\begin{align*}
\chi(X,\mathscr{M}) \in \zz  
\end{align*}
is defined in \cite{arXiv:0808.4038} with the following properties.
\begin{lemma}
\label{lem:degree_properties}
$ $
\begin{enumerate}
\item If $X$ is $(\mathscr{M},g,S^{1})$-admissible with only finitely many critical orbits,
that are all nondegenerate, then
\begin{align*}
\chi_{S^1}(X,\mathscr{M}) := \sum_{\{S^{1}*\gamma\subset \mathscr{M} \where X(S^{1}*\gamma)=0\}}
\deg_{loc,S^1}(X,S^{1}*\gamma).  
\end{align*}
\item If $X_0$ and $X_1$ are $(\mathscr{M},g_t,S^1)$-homotop, then
$\chi(X_0,\mathscr{M})=\chi(X_1,\mathscr{M})$.
\end{enumerate}  
\end{lemma}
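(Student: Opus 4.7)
The plan is to reduce both statements to standard properties of the Rothe/Leray--Schauder degree on the slice transverse to the $S^1$-orbits, via the equivariant Sard--Smale perturbation argument of \cite[Lem.~3.9]{arXiv:0808.4038}. Since that construction is carried out in local charts $Exp_{\gamma,g}$ around critical $S^1$-orbits and the target manifold enters only through the pointwise data on tubular neighbourhoods, I expect the argument to transfer verbatim to a general compact orientable $M$ once the five admissibility properties are in place on $H^{2,2}(S^1,M)$ as above.

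For item (1), suppose all critical orbits are finite in number and nondegenerate. By property (5) any zero $\gamma$ lies in $H^{4,2}(S^1,M)$, hence $\dot\gamma \in T_\gamma H^{2,2}(S^1,M)$ and $\dot\gamma \notin \langle W_g(\gamma)\rangle^\perp$; by \eqref{eq:dx_to_wg_perp} the operator $D_gX\eval_\gamma$ lands in $\langle W_g(\gamma)\rangle^\perp$. Nondegeneracy then forces $\ker D_gX\eval_\gamma=\langle\dot\gamma\rangle$, so the restriction $D_gX\eval_\gamma\colon\langle W_g(\gamma)\rangle^\perp\to\langle W_g(\gamma)\rangle^\perp$ is an isomorphism of the form $\text{identity}+\text{compact}$, whose Leray--Schauder sign is the well defined element $\sgn D_gX\eval_\gamma\in\{\pm 1\}$. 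I would then take the displayed formula as the \emph{definition} of $\chi_{S^1}(X,\mathscr{M})$ in the nondegenerate case, and postpone the independence from small perturbations to~(2).

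For item (2), let $X(t,\cdot)$ be a $(\mathscr{M},g_t,S^1)$-homotopy. By admissibility the set
\begin{align*}
Z:=\{(t,\gamma)\in[0,1]\times\mathscr{M}\where X(t,\gamma)=0\}/S^1
\end{align*}
is compact. First I would apply the equivariant Sard--Smale lemma to the parameter family $(t,\gamma)\mapsto X(t,\gamma)$, perturbing it $S^1$-equivariantly and $W_{g_t}$-orthogonally (modifying $X_0,X_1$ only into nearby nondegenerate fields with the same local degrees) to obtain a new homotopy whose zero set $\widetilde Z$ is a compact, oriented $C^1$ one-manifold with boundary in $(\{0\}\sqcup\{1\})\times\mathscr{M}/S^1$. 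Its components are either embedded circles or arcs, and the orientation of $\widetilde Z$ is arranged so that each boundary point contributes exactly $\deg_{loc,S^1}$. Summing over components, circles contribute nothing and the two endpoints of each arc cancel in sign, so $\chi_{S^1}(X_0,\mathscr{M})=\chi_{S^1}(X_1,\mathscr{M})$. Specialising the same cobordism to the constant homotopy between two generic perturbations of a single $X$ simultaneously shows that the sum in~(1) does not depend on the choice of perturbation, closing the loop.

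The main obstacle is the equivariant Sard--Smale step itself: one must construct, near each critical orbit $S^1*\gamma$, a finite-dimensional family of $S^1$-equivariant perturbations tangent to $\mathscr{M}$, orthogonal to $W_{g_t}$, and whose values at $\gamma$ span $\langle W_g(\gamma)\rangle^\perp/\langle\dot\gamma\rangle$. The ellipticity property~(5) is indispensable here, since it forces zeros of the perturbed equation to lie in $H^{4,2}(S^1,M)$ so that $\theta\mapsto\theta*\gamma$ is $C^2$ and the quotient by $S^1$ inherits a manifold structure near the orbit. Because the entire construction is local in $Exp_{\gamma,g}$-charts and no feature specific to $S^2$ ever enters, the extension of the result of \cite{arXiv:0808.4038} to a general $(M,g)$ is ultimately a matter of bookkeeping.
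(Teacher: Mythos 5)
The paper does not actually prove this lemma: it defers entirely to \cite[Lem.~3.9]{arXiv:0808.4038} for the construction of $\chi_{S^1}$ and its properties, and merely observes that the equivariant Sard--Smale perturbation and the local modifications near critical orbits are carried out in $Exp_{\gamma,g}$-charts and tubular neighbourhoods, so that nothing in the argument is specific to $S^2$ and it transfers verbatim to a general compact orientable surface $M$. Your proposal takes essentially the same route (defer to \cite{arXiv:0808.4038}, note locality of the construction) and your cobordism sketch is a faithful outline of the argument in that reference. One small imprecision to flag: for a \emph{general} $(\mathscr{M},g,S^1)$-admissible vector field, property $(4)$ only requires $D_gX\eval_\gamma$ to be a Rothe map, not necessarily of the form identity plus compact; the latter, and the Leray--Schauder reading of the sign, are specific to $X_{k,g}$ as noted in the paper. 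Since the sign of a Rothe map is well defined in any case, this does not affect your argument.
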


\section{The Unperturbed Problem}
\label{sec:unperturbed}
Let $\hz \subset \rz^{3}$ be the standard hyperbolic plane
\begin{align*}
\hz:= \{(\xi^1,\xi^2,\tau) \in \rz^3 \where \tau^2-|\xi|^2=1 \text{ and }\tau >0\}  
\end{align*}
with metric $g_{0}$ induced by the Minkowski metric $g_m$,
\begin{align*}
g_m := (d\xi^1)^2+(d\xi^2)^2-(d\tau)^2=\langle \cdot,\cdot\rangle_m.
\end{align*}
We choose the orientation on $\hz$ 
such that $J_{g_0}(y)$ is given for $y \in \hz$ by
\begin{align*}
J_{g_0}(y)(v) := y \cross_m v \text{ for all }v \in T_y \hz,   
\end{align*}
where $\cross_m$ denotes the twisted cross product in $\rz^{3}$,
\begin{align*}
\begin{pmatrix}
v^1\\v^2\\v^3    
\end{pmatrix}
\cross_m
\begin{pmatrix}
w^1\\w^2\\w^3    
\end{pmatrix}
:=
\begin{pmatrix}
v^3w^2-v^2w^3\\v^1w^3-v^3w^1\\v^1w^2-v^2w^1    
\end{pmatrix}.
\end{align*}
The twisted cross product $\cross_m$ is related 
to the usual cross product $\cross$ in $\rz^3$ by  
$v \cross_m w = I_{2,1}v \cross I_{2,1}w$, where 
$I_{2,1}$ is given by
\begin{align*}
I_{2,1}:= 
\begin{pmatrix}
1 &0 &0\\
0 &1 &0\\
0&0&-1    
\end{pmatrix},
\end{align*}
and satisfies for $a,\,b,\,c,\,d \in \rz^3$ 
\begin{align*}
\langle (a\cross_m b),a\rangle_m=0=\langle (a\cross_m b),b\rangle_m,\\
a\cross_m(b\cross_m c)= -b\langle a,c\rangle_m + c\langle a,b\rangle_m,\\
\langle (a\cross_m b),(c \cross_m d)\rangle_m
= -\langle a,c\rangle_m \langle b,d\rangle_m +
\langle b,c\rangle_m\langle a,d\rangle_m   
\end{align*}
We fix a compact, orientable Riemannian surface $(M,g_0)$,
\begin{align*}
M := \hz/\Gamma,  
\end{align*}
where $\Gamma \subset SO(2,1)_+$ is a group
of oriented isometries acting
freely and properly discontinuously on $\hz$. Concerning the metric we will
be sloppy and denote by $g_0$ the
metric on $\hz$ as well as the induced metric on $\hz/\Gamma$.
The unperturbed problem on $M$ is given by
\begin{align}
\label{eq:unperturbed_M}
D_{t,g_0} \dot \gamma = |\dot \gamma|_{g_0} k_0 J_{g_0}(\gamma) \dot \gamma,  
\end{align}
where $k_0$ is a positive constant.\\
We shall compute the $S^1$-degree of the unperturbed equation \eqref{eq:unperturbed_M}
in three steps. {\em Step 1:} We compute explicitly the set $\mathcal{Z}_M$
of Alexandrov embedded solutions in $H^{2,2}(S^1,M)$ to \eqref{eq:unperturbed_M}
and show that $\mathcal{Z}_M$ is a finite dimensional, nondegenerate manifold, 
in the sense that we have for all $\tilde{\alpha} \in \mathcal{Z}_M$
\begin{align*}
T_{\tilde{\alpha}} \mathcal{Z}_M = \text{kernel}(D_{g_0}X_{k_0,g_0}\eval_{\tilde{\alpha}}),\\
T_{\tilde{\alpha}} H^{2,2}(S^1,M) = 
T_{\tilde{\alpha}} \mathcal{Z}_M \oplus R(D_{g_0}X_{k_0,g_0}\eval_{\tilde{\alpha}}).  
\end{align*}
{\em Step 2:} In Section \ref{sec:perturbed} 
we perform a finite dimensional reduction of a slightly perturbed problem:
We consider for $k_1\in C^\infty(M,\rz)$, which will be chosen later,
and $\eps \in \rz$, which is assumed to be very small,
the perturbed vector field $X_{g_0,\eps}$ defined by
\begin{align*}
X_{g_0,\eps}(\gamma)&:=
(-D_{t,g_0}^{2} + 1)^{-1} 
\big(-D_{t,g_0} \dot\gamma + |\dot \gamma|_{g_0} 
(k_0+\eps k_1(\gamma))J_{g_0}(\gamma)\dot\gamma\big)\\
&= X_{k_0,g_0}(\gamma)+\eps K_1(\gamma),
\end{align*}
where the vector field $K_1$ is given by
\begin{align*}
K_1(\gamma):= (-D_{t,g_0}^{2} + 1)^{-1} |\dot \gamma|_{g_0} 
\big(k_1(\gamma)J_{g_0}(\gamma)\dot\gamma \big).  
\end{align*}
We show that if $S^1*\tilde{\alpha}_0 \subset \mathcal{Z}_M$ is a nondegenerate
critical orbit 
of the vector field $\tilde{\alpha} \mapsto P_1(\tilde{\alpha}) \circ K_1(\tilde{\alpha})$
on $\mathcal{Z}_M$, 
where $P_1(\tilde{\alpha})$ is a projection onto $T_{\tilde{\alpha}} \mathcal{Z}_M$ defined below,
then for any $0<\eps <<1$ 
there is a unique nondegenerate critical orbit $S^1*\tilde{\gamma}(\eps)$
of $X_{g_0,\eps}$ 
 such that $\tilde{\gamma}(\eps)$ converges to $\tilde{\alpha}_0$ as $\eps \to 0^+$
and
\begin{align*}
\deg_{loc, S^{1}}(X_{g_0,\eps},S^1*\gamma(\eps)) 
&= -\deg_{loc}(P_1(\cdot)\circ K_1(\cdot),S^1*\tilde{\alpha}_0).  
\end{align*}  
{\em Step 3:} In Section \ref{sec:computation_degree}
we choose a Morse function $k_1 \in C^\infty(M,\rz)$ with critical points
\begin{align*}
\{\tilde{w}_i \in M \where 1 \le i \le n\}.
\end{align*}
We show that if $k_0 >>1 $ is large, then $P_1(\cdot)\circ K_1(\cdot)$ has exactly 
$n$ critical orbits $\{S^1*\tilde{\alpha}_{i,k_0} \where 1 \le i \le n\}$ such that
for $1 \le i \le n$
\begin{align*}
\deg_{loc}(P_1(\cdot)\circ K_1(\cdot),S^1*\tilde{\alpha}_{i,k_0})= 
\deg_{loc}(\nabla k_1, \tilde{w}_i). 
\end{align*}
This yields the formula
$\chi_{S^1}(X_{k_0,g_0},\mathscr{M}_A)=-\chi(M)$, where $\mathscr{M}_A$ is the subset of 
$H^{2,2}(S^{1},M)$ consisting of Alexandrov embedded, regular curves.\\
{\em Step 1:}
The prescribed geodesic curvature equation with $k\equiv k_0$ on $(\hz,g_0)$ is given by
\begin{align}
\label{eq:unpert:1}
Proj_{\gamma^\perp,g_m}\ddot \gamma = |\dot \gamma|_m k_0 \gamma \cross_m \dot \gamma,  
\end{align}
where $\gamma \in H^{2,2}(S^1,\hz)$, $\dot \gamma$ and $\ddot \gamma$ are the usual derivatives
of $\gamma$ considered as a curve in $\rz^3$, $|\dot \gamma|_m$ is the Minkowski norm 
of $\dot \gamma$
in $(\rz^3,g_m)$.\\
If $k_0>1$ then there is a unique $r=r(k_0)>0$ such that
\begin{align*}
k_0= \frac{\sqrt{1+r^2}}{r}.  
\end{align*}
We call a triple of vectors $\{v_0,v_1,w\}$ in $\rz^3$ a positive oriented orthonormal system
with respect to $g_m$, if
\begin{align*}
\langle v_0,v_1\rangle_m = \langle v_0,w\rangle_m= \langle v_1,w\rangle_m=0,\\
\langle v_0,v_0\rangle_m =\langle v_1,v_1\rangle_m = -\langle w,w\rangle_m=1,\\
v_0 \cross_m v_1 = w.
\end{align*}
We define for $\lambda>0$ and a positive oriented orthonormal system $\{v_0,v_1,w\}$
the function $\alpha \in C^\infty(\rz,\hz)$ by
\begin{align}
\label{eq:soln_alpha}
\alpha(t,\lambda,v_0,v_1,w) := \sqrt{1+r^2}w+r\cos(\lambda r^{-1}t)v_1+r\sin(\lambda r^{-1}t)v_0  
\end{align}
A direct calculation shows that $\alpha(\cdot,\lambda,v_0,v_1,w)$ solves (\ref{eq:unpert:1}).
We fix $(\gamma_0,\tilde{v}_0) \in T\hz$ with $\tilde{v}_0 \neq 0$
and define
the parameter $\lambda := |\tilde{v}_0|_m$ and 
the positive oriented orthonormal system $(v_0,v_1,w)$ by
\begin{align*}
v_0 &:= \lambda^{-1} \tilde{v}_0,\, v_1 := -r\gamma_0-\sqrt{1+r^2}(v_0\cross_m \gamma_0),\,
w := v_0\cross_m v_1.
\end{align*}
Then $\alpha(\cdot,\lambda,v_0,v_1,w)$ satisfies the initial conditions
\begin{align*}
\alpha(0,\lambda,v_0,v_1,w)=\gamma_0,\; \dot\alpha(0,\lambda,v_0,v_1,w)= \tilde{v}_0,  
\end{align*}
and we deduce that all non constant solutions of \eqref{eq:unpert:1} are obtained in this way.
Since we are only interested in solutions in $H^{2,2}(S^1,\hz)$ we get an extra condition
on $\lambda$, i.e. the $1$-periodicity leads to
\begin{align*}
\lambda \in 2\pi \nz r.  
\end{align*}
\begin{lemma}
\label{l:alexandrov_soln_hz}
The oriented Alexandrov embedded solutions in 
$H^{2,2}(S^1,\hz)$ of equation (\ref{eq:unpert:1}) are
given by the set of simple solutions
\begin{align*}
\mathcal{Z}_{\hz} :=
\big\{&\alpha(\cdot, 2\pi r,v_0,v_1,w )\where \\
&\{v_0,v_1,w\} 
\text{ is a pos. orth. system in }(\rz^3,g_m)\big\}.  
\end{align*} 
\end{lemma}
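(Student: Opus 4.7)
The plan is to combine the explicit classification of all $H^{2,2}(S^{1},\hz)$-solutions already obtained—every nonconstant solution of (\ref{eq:unpert:1}) equals $\alpha(\cdot,2\pi n r,v_0,v_1,w)$ for some $n\in\nz$ and some positively oriented orthonormal system, and traces a geodesic circle $C$ of curvature $k_0$ in $\hz$ exactly $n$ times—with a topological argument that singles out $n=1$.

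For the inclusion $\mathcal{Z}_{\hz}\subseteq$ (oriented Alexandrov embedded solutions): given $\alpha\in\mathcal{Z}_{\hz}$, the image $C$ is a simple geodesic circle bounding a hyperbolic disk $D\subset\hz$. Mimicking the formula for $\alpha$ itself, I would define an explicit immersion $F\in C^{\infty}(\overline B,\overline D)$ by
\[
F(u,v):=\sqrt{1+r^{2}(u^{2}+v^{2})}\,w+ru\,v_{1}+rv\,v_{0},
\]
post-composed if necessary with a reflection of $\overline B$ so that $F\eval_{\rand B}$ agrees with the given parametrization of $\alpha$. Checking that $F$ takes values in $\hz$ is immediate from $\langle v_i,v_j\rangle_m=\delta_{ij}$ and $\langle w,w\rangle_m=-1$, and verifying the orientation inequality of Definition~\ref{def:alexandrov} on $\rand B$ reduces to a direct Minkowski-algebra computation with $J_{g_{0}}(y)\xi=y\cross_m\xi$ and the identities listed after (\ref{eq:soln_alpha}).

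For the reverse inclusion (Alexandrov embedded $\Longrightarrow n=1$): suppose $\alpha_n:=\alpha(\cdot,2\pi n r,v_0,v_1,w)$ with $n\ge 2$ were oriented Alexandrov embedded via some immersion $F\in C^{1}(\overline B,\hz)$. The orientation inequality of Definition~\ref{def:alexandrov} is equivalent, at each boundary point $x$, to $(DF\eval_x x,DF\eval_x \tau(x))$ being a positively oriented frame at $F(x)\in\hz$ (here $\tau$ is the tangent to $\rand B$ in the convention used for $\dot\gamma$), because $J_{g_0}\dot\gamma$ is orthogonal to $\dot\gamma$ and on the positive side. Thus $F$ is orientation-preserving at every point of $\rand B$; since $F$ is an immersion and $\overline B$ is connected, the sign of the non-vanishing Jacobian is constant, and $F$ is globally orientation-preserving. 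The standard Brouwer degree identity then gives, for every regular value $p\in\hz\setminus F(\rand B)$, that $\#F^{-1}(p)$ equals the winding number of $F\eval_{\rand B}=\alpha_{n}$ around $p$. Since $\alpha_{n}$ is the $n$-fold cover of $C$, this winding number equals $n$ for $p\in D$ and $0$ for $p\in\hz\setminus\overline D$, so $F(\overline B)\subseteq\overline D$; moreover if some interior point $x_{0}\in B$ satisfied $F(x_{0})\in C$, the local diffeomorphism property of $F$ at $x_{0}$ would force $F$ to send a neighborhood of $x_{0}$ onto a neighborhood of $F(x_{0})$ in $\hz$ meeting $\hz\setminus\overline D$, contradicting $F(\overline B)\subseteq\overline D$. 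Hence $F^{-1}(C)=\rand B$.

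It follows that $F\eval_{B}\colon B\to D$ is a proper local diffeomorphism between two connected, simply-connected open surfaces, hence a covering map. Since $D$ is simply-connected, the only such cover is of degree $1$, so $F\eval_{B}$ is a diffeomorphism; passing to the closures via compactness, $F\eval_{\rand B}\colon\rand B\to C$ is a homeomorphism as well, contradicting the assumption that it is the $n$-fold cover for some $n\ge 2$. This forces $n=1$ and completes the characterization. The main technical step is the identification of the pointwise analytic inequality of Definition~\ref{def:alexandrov} with the global orientation-preserving property of $F$: once this is in hand, the degree/winding-number identity $\#F^{-1}(p)=\mathrm{wind}(\alpha_{n},p)$ makes the covering-space argument elementary.
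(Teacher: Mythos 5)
Your proof is correct, but the route through the crucial "only simple solutions are Alexandrov embedded" step is genuinely different from the paper's. The paper identifies $\hz$ diffeomorphically with the plane $(\rz^2,\delta)$ and applies the Gauss--Bonnet formula to $(B,F_n^*\delta)$: since $F_n^*\delta$ is flat, the boundary integral of geodesic curvature equals $2\pi$, while on the other hand it equals $n$ times the total turning of the simple curve $\gamma_1$, i.e. $2\pi n$, forcing $n=1$ in one line. You instead run a degree-theoretic and covering-space argument: the orientation inequality forces the Jacobian of $F$ to have constant sign, whence $\#F^{-1}(p)=|\mathrm{wind}(\alpha_n,p)|=n$ for regular $p$ in the disk $D$ bounded by the geodesic circle, then $F^{-1}(C)=\rand B$, $F\eval_B$ is a proper local diffeomorphism $B\to D$, and simple connectivity forces the covering degree to be $1$. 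The two proofs trade one global tool for another (Gauss--Bonnet versus Brouwer degree plus the covering criterion for proper local homeomorphisms) and are comparably long once the details are filled in.

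Two small points worth tightening. First, your assertion that the inequality in Definition~\ref{def:alexandrov} is \emph{equivalent} to $F$ being orientation-preserving at the boundary point should be verified with the paper's actual sign conventions for $J_{g_0}$, the outward normal $x$, and the parametrization of $\rand B$; what matters for the argument is only that the inequality pins down one consistent sign of the Jacobian along $\rand B$, and then constancy of the Jacobian sign on the connected set $\overline B$ does the rest, so the possible global sign flip is harmless (it only reverses both the degree and the winding number). Second, the final step "passing to the closures via compactness, $F\eval_{\rand B}$ is a homeomorphism" is compressed: injectivity of $F$ on $\rand B$ does not follow from continuity and compactness alone but needs the local-diffeomorphism property of $F$ at boundary points (two distinct boundary preimages of the same point of $C$ would have overlapping images of their interior half-disks, contradicting injectivity of $F\eval_B$). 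In fact you can omit this step altogether: once $F\eval_B\colon B\to D$ is a degree-one covering, the regular value count $\#F^{-1}(p)=1$ contradicts the earlier count $\#F^{-1}(p)=n$ unless $n=1$, which finishes the proof without ever discussing $F\eval_{\rand B}$.

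Finally, for the forward inclusion, the explicit immersion you write down is, up to the change from polar to Cartesian coordinates, exactly the one the paper uses; checking the orientation inequality is indeed just a Minkowski cross-product computation, though if the sign comes out wrong with the stated conventions one replaces $v\mapsto -v$ (your "reflection"), as you anticipate.
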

\begin{proof}
From the analysis above the periodic solutions to (\ref{eq:unpert:1}) are given by
\begin{align*}
\big\{\alpha(\cdot, 2\pi n r,v_0,v_1,w )\where 
&n \in \nz \text{ and }\\
&\{v_0,v_1,w\} 
\text{ is a pos. orth. system in }(\rz^3,g_m)\big\}.  
\end{align*} 
We fix $n \in \nz$ and a positive orthonormal system $\{v_0,v_1,w\}$ and write
\begin{align*}
\gamma_n:= \alpha(\cdot, 2\pi n r,v_0,v_1,w).  
\end{align*}
Assume $\gamma_n$ is oriented Alexandrov embedded and let $F_n$ be the 
corresponding immersion.
Since $\hz$ is diffeomorphic to $\rz^2$
we may assume 
that $\gamma_1$ is a simple curve in the plane $(\rz^2,\delta)$ with standard metric $\delta$.
If we apply the Gau{\ss}-Bonnet formula to $(B,F_n^*\delta)$ and the embedded
curve $\gamma_1$ in the plane, we obtain
\begin{align*}
2\pi &= \int_{\rand B} k_{F_n^*\delta}\, dS_{F_n^*\delta} + \int_B K_{F_n^*\delta}\, dA_{F_n^*\delta}\\ 
&= \int_{\gamma_n} k_{\delta} \, dS_{\delta}
= n\int_{\gamma_1} k_{\delta} \, dS_{\delta} = n2\pi,
\end{align*}
which is only possible for $n=1$.\\
The curve $\gamma_1$ is oriented Alexandrov embedded using polar coordinates and
\begin{align*}
[0,2\pi]\times [0,1] \ni (t,s) \mapsto 
\sqrt{1+s^2r^2}w+sr\cos(t)v_1+sr\sin(t)v_0.
\end{align*}
\end{proof}
The Lorentz transformations 
$S0(2,1)_+$ of $(\rz^3,g_m)$,
\begin{align*}
SO(2,1)_+ := \{A \in O(2,1) \where A(\hz)\subset \hz \text{ and } \det A = 1\},  
\end{align*}
correspond to the oriented isometries of $(\hz,g_0)$ and act on solutions: 
if $\gamma$ solves (\ref{eq:unpert:1}) so does $A\circ\gamma$
for any $A\in SO(2,1)_+$. We have
\begin{align*}
A\circ \alpha(\cdot, \lambda,v_0,v_1,w ) = \alpha(\cdot, \lambda,A(v_0),A(v_1),A(w)).
\end{align*}
Moreover, there holds,
\begin{align}
\label{eq:param_crit_orbits_hz}
\alpha(\cdot, 2\pi r,v_0,v_1,w ) = \theta*\alpha(\cdot, 2\pi r,v_0',v_1',w')
\end{align}
for some $\theta \in S^1$ if and only if $w=w'$. 
Consequently, the critical orbits of \eqref{eq:unpert:1} in $\hz$,
$
\{S^{1}*\gamma \where \gamma \in \mathcal{Z}_{\hz}\},  
$
are parametrized by $w \in \hz$ and correspond to ``circles'' with radius $r$
around the center $w$ in $\hz$.\\
We let $\pi_M$ be the natural projection, $\pi_M: \hz \to \hz/\Gamma$.
Any point $z \in \hz$ admits a neighborhood $U=B_{\delta}(z)$ 
such that $\pi_M\eval_U: U \to  \pi_M(U)$
is an isometry. From \eqref{eq:soln_alpha} there is $C_{k_0}>1$ such that
if $k_0\ge C_{k_0}$ then any solution to \eqref{eq:unpert:1} on $\hz$ passing through $z$
remains in $U$. For $M$ is compact $C_{k_0}= C_{k_0}(\Gamma)$ and $\delta>0$ may be chosen 
independently of $z$. Equation \eqref{eq:1} is invariant under isometries,
hence the set of solutions to \eqref{eq:unperturbed_M} with $k_0 \ge C_{k_0}$
is given by
\begin{align*}
\big\{&\pi_M\circ \alpha(\cdot, 2\pi r,v_0,v_1,w )\where \\
&\{v_0,v_1,w\} 
\text{ is a pos. orth. system in }(\rz^3,g_m)\big\}.   
\end{align*}
Moreover, we have
\begin{lemma}
\label{l:alexandrov_soln_M}
If $k_0 \ge C_{k_0}$, then
the oriented Alexandrov embedded solutions in 
$H^{2,2}(S^1,M)$ of equation \eqref{eq:unperturbed_M} are
given by the set of simple solutions
\begin{align*}
\mathcal{Z}_{M} :=
\big\{&\tilde{\alpha}=\pi_M \circ \alpha(\cdot, 2\pi r,v_0,v_1,w )\where \\
&\{v_0,v_1,w\} 
\text{ is a pos. orth. system in }(\rz^3,g_m)\big\}.  
\end{align*}  
\end{lemma}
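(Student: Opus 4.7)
The plan is to reduce the statement to Lemma \ref{l:alexandrov_soln_hz} via the covering projection $\pi_M: \hz \to M$. Since $\Gamma \subset SO(2,1)_+$ acts by orientation-preserving isometries, $\pi_M$ is an orientation-preserving local isometry, so equations \eqref{eq:unpert:1} and \eqref{eq:unperturbed_M} correspond under pushforward/pullback by $\pi_M$. Thus it suffices to establish two inclusions: every element on the right-hand side is oriented Alexandrov embedded on $M$, and every oriented Alexandrov embedded solution on $M$ lifts to one in $\mathcal{Z}_{\hz}$.

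For the inclusion ``$\supseteq$'', fix a positive orthonormal system $\{v_0,v_1,w\}$ and consider $\alpha_{\hz} := \alpha(\cdot, 2\pi r, v_0, v_1, w) \in \mathcal{Z}_{\hz}$. The proof of Lemma \ref{l:alexandrov_soln_hz} provides the explicit immersion
\begin{align*}
F_{\hz}(t,s) := \sqrt{1+s^2r^2}\,w + sr\cos(t)v_1 + sr\sin(t)v_0
\end{align*}
of $\overline{B}$ into $\hz$ filling $\alpha_{\hz}$. Under the assumption $k_0 \ge C_{k_0}$, the radius $r = r(k_0)$ is so small that $F_{\hz}(\overline{B})$ lies inside a geodesic ball $U = B_\delta(z)$ on which $\pi_M\eval_U$ is an isometric diffeomorphism; this is exactly the content of the paragraph preceding the lemma statement. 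Hence $\pi_M \circ F_{\hz}: \overline{B} \to M$ is an immersion filling $\pi_M \circ \alpha_{\hz}$, and since $\pi_M$ is a local isometry preserving orientations, the positivity condition from Definition \ref{def:alexandrov} transfers from $(F_{\hz},\alpha_{\hz})$ to $(\pi_M\circ F_{\hz}, \pi_M \circ \alpha_{\hz})$.

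For ``$\subseteq$'', let $\tilde\gamma \in H^{2,2}(S^1,M)$ be an oriented Alexandrov embedded solution with filling immersion $F \in C^1(\overline{B},M)$. Because $\overline{B}$ is simply connected and $\pi_M$ is a covering, standard covering-space theory produces a $C^1$ lift $\tilde F: \overline{B} \to \hz$ with $\pi_M \circ \tilde F = F$, and $\tilde F$ is again an immersion since $\pi_M$ is a local diffeomorphism. Its boundary restriction $\tilde\alpha := \tilde F\eval_{\rand B}$ is then a closed immersed curve in $\hz$ which, because $\pi_M$ is an orientation-preserving local isometry, is itself oriented Alexandrov embedded (with filling $\tilde F$) and solves \eqref{eq:unpert:1}. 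Applying Lemma \ref{l:alexandrov_soln_hz}, $\tilde\alpha = \alpha(\cdot, 2\pi r, v_0, v_1, w)$ for some positive orthonormal system, so $\tilde\gamma = \pi_M \circ \tilde\alpha$ belongs to $\mathcal{Z}_M$.

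The main delicate point is the lifting step: one must verify that the Alexandrov disk lifts coherently and that both the immersion property and the orientation inequality descend/ascend through $\pi_M$. This hinges on $\Gamma \subset SO(2,1)_+$ acting by orientation-preserving isometries together with the smallness estimate for $r(k_0)$ that places the lifted disk within a fundamental neighborhood. Once these are in place, both inclusions reduce directly to the already-established Lemma \ref{l:alexandrov_soln_hz}.
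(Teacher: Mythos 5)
Your proof is correct and follows essentially the same strategy as the paper: for one inclusion, push the explicit filling immersion from Lemma~\ref{l:alexandrov_soln_hz} forward by $\pi_M$; for the other, lift the filling disk through the covering $\pi_M:\hz\to M$ (possible since $\overline B$ is simply connected) and reduce to Lemma~\ref{l:alexandrov_soln_hz}. The only cosmetic difference is that the paper first enumerates all periodic solutions on $M$ as the multiple covers $\gamma_n$ and then rules out $n\geq 2$ by lifting, whereas you lift directly without that intermediate reduction; your remark that the lifted disk lies inside a single evenly-covered ball $U$ is not actually needed in the $\supseteq$ direction, since $\pi_M\circ F_{\hz}$ is an orientation-preserving immersion regardless of whether $F_{\hz}(\overline B)\subset U$.
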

\begin{proof}
We fix $n \in \nz$ and a positive orthonormal system $\{v_0,v_1,w\}$ and write
\begin{align*}
\gamma_n:= \pi_M \circ \alpha(\cdot, 2\pi n r,v_0,v_1,w)  
\end{align*}
From the above analysis any periodic solution to \eqref{eq:unperturbed_M} on $(M,g_0)$ is
of this form. 
Hence, it is enough 
to show that 
$\gamma_n$ is oriented Alexandrov embedded, if and only if $n=1$.\\
Concatenating the immersion in the proof of Lemma \ref{l:alexandrov_soln_hz}
with $\pi_M$ we deduce that $\gamma_1$ is oriented Alexandrov embedded.
Suppose $\gamma_n$ is oriented Alexandrov embedded with an immersion $F_n:B \to M$.
From the homotopy lifting property of the covering $\pi_M:\hz \to M$ we may lift $F_n$
to see that $\alpha(\cdot, 2\pi n r,v_0,v_1,w)$ is oriented Alexandrov embedded
in $\hz$. From Lemma \ref{l:alexandrov_soln_hz} this is only possible for $n=1$. 
\end{proof}
From \eqref{eq:param_crit_orbits_hz} we find
\begin{align*}
\pi_M\circ \alpha(\cdot, 2\pi r,v_0,v_1,w ) = \theta* \pi_M\circ \alpha(\cdot, 2\pi r,v_0',v_1',w')
\end{align*}
for some $\theta \in S^1$ if and only if $\pi_M(w)=\pi_M(w')$, 
such that the critical orbits of \eqref{eq:unpert:1} in $M$
are parametrized by $w \in M$
and correspond to projections on $M$ of ``circles'' in $\hz$.\\
In the following we always assume that 
\begin{align*}
k_0 \ge C_{k_0}.  
\end{align*}
We denote by $X_{k_0,g_0,\hz}$ the vector field on $H^{2,2}(S^1,\hz)$ corresponding
to equation \eqref{eq:unpert:1}. 
We fix a solution $\alpha=\alpha(\cdot,2\pi r,v_0,v_1,w)$ of \eqref{eq:unpert:1}  
and note that for $V \in  T_{\alpha}H^{2,2}(S^1,\hz)$
\begin{align*}
R_{g_0}(V,\dot\alpha)\dot\alpha = -V|\dot \alpha|_m^{2}+\langle V,\dot\alpha\rangle_m \dot\alpha.  
\end{align*}
By \eqref{eq:dg_x_g_formula} a vector field $W$ is contained in
the kernel of $D_{g_0}X_{k_0,g_0,\hz}\eval_{\alpha}$ if and only if $W$ is a periodic solution
of
\begin{align}
\label{eq:linearization}
0 &= 
-D_{t,g_0}^2W + W|\dot \alpha|_m^{2}-\langle W,\dot\alpha\rangle_m \dot\alpha \notag \\
&\quad + |\dot\alpha|_m^{-1}\langle D_{t,g_0}W,\dot\alpha\rangle_m k_0 (\alpha\cross_m \dot\alpha)
+|\dot\alpha|_m  k_0 (\alpha \cross_m D_{t,g_0}W).  
\end{align}
Due to the geometric origin of equation (\ref{eq:unpert:1}) and the
$SO(2,1)_+$ invariance we find that
\begin{align}
\label{eq:12}
W_0(t,v_0,v_1,w) &:= t\dot\alpha,\\
W_1(t,v_0,v_1,w) &:=  \dot\alpha= 2\pi r(-\sin(2\pi t)v_1 + \cos(2\pi t)v_0), \notag \\
W_2(t,v_0,v_1,w) &:=  (1+r^2)^{\frac12} v_1+r\cos(2\pi t)w, \notag \\
W_3(t,v_0,v_1,w) &:= (1+r^2)^{\frac12} v_0+r\sin(2\pi t)w,\notag
\end{align}
solve \eqref{eq:linearization}. In the sequel,
we will omit the dependence of $W_i$ on $(v_0,v_1,w)$, if there is no possibility of confusion.
The initial values of $W_0,\dots, W_3$ 
\begin{align*}
W_0(0,v_0,v_1,w)& =0 ,\, 
D_{t,g_0} W_0(0,v_0,v_1,w)= 2\pi r v_0,\\
W_1(0,v_0,v_1,w) &=2\pi r v_0,\, D_{t,g_0} W_1(0,v_0,v_1,w)= 
-4\pi^2 r^3k_0(k_0v_1+w),\\
W_2(0,v_0,v_1,w) &= rk_0 v_1+rw,\, D_{t,g_0} W_2(0,v_0,v_1,w)= 0,\notag\\  
W_3(0,v_0,v_1,w) &= r k_0 v_0,\, D_{t,g_0} W_3(0,v_0,v_1,w)= -2\pi r^{3}(k_0v_1+w).
\end{align*}
are a basis of $\big(T_{\alpha(0)}\hz\big)^2$, such that any solution to \eqref{eq:linearization}
is a linear combination of $W_0,\dots, W_3$.
As only $W_1,\dots, W_3$ are periodic, we obtain
\begin{align}
\label{eq:kernel_DX_0}
\text{kernel}(D_{g_0}X_{k_0,g_0,\hz}\eval_{\alpha})=
\langle W_1,\,W_2,\,W_3 \rangle.  
\end{align}
We fix a neighborhood $U$ of $\alpha(0)$ as above, where $\pi_M:\: U \to \pi_M(U)$
is an isometry. Then $\alpha \in H^{2,2}(S^1,U)$ and $\pi_M$ induces isomorphisms
\begin{align*}
\pi_M:\: H^{2,2}(S^1,U) \to H^{2,2}(S^1,\pi_M(U)), \, \alpha \mapsto \pi_M \circ \alpha,\\
(\pi_M)_*:\: T_\alpha H^{2,2}(S^1,\hz) \to T_{\pi_M \circ \alpha} H^{2,2}(S^1,M), 
\, V \mapsto d\pi_M\eval_\alpha V,   
\end{align*}
where $(\pi_M)_*$ is an isometry. Moreover, there holds on $H^{2,2}(S^1,U)$ 
\begin{align}
\label{eq:3}
(\pi_M)_* \circ X_{g_0,k_0,\hz} = X_{g_0,k_0}\circ \pi_M, \notag \\
(\pi_M)_* \circ D_{g_0}X_{g_0,k_0,\hz}\eval_\alpha 
=D_{g_0} X_{g_0,k_0}\eval_{\pi_M \circ \alpha} \circ (\pi_M)_*. 
\end{align}
Since
$\mathcal{Z}_{\hz}$ 
and $\mathcal{Z}_{M}$ are three dimensional submanifolds of $H^{2,2}(S^{1},\hz)$
and $H^{2,2}(S^{1},M)$, respectively, we have for $\alpha \in \mathcal{Z}_{\hz}$
and $\tilde{\alpha}=\pi_M \circ \alpha \in \mathcal{Z}_{M}$ 
\begin{align*}
T_\alpha \mathcal{Z}_{\hz} &= 
\text{kernel}(D_{g_0}X_{k_0,g_0,\hz}\eval_{\alpha})=\langle W_1,\,W_2,\,W_3 \rangle,\\
T_{\tilde{\alpha}} \mathcal{Z}_{M} &= 
\text{kernel}(D_{g_0}X_{k_0,g_0}\eval_{\tilde{\alpha}})\\
&=\langle \tilde{W}_i:= (\pi_M)_* \circ W_i \where 1 \le i \le 3 \rangle.
\end{align*} 
To compute the image of $D_{g_0}X_{k_0,g_0,\hz}\eval_{\alpha}$ we note 
that
$\{\dot\alpha,\alpha\cross_m\dot\alpha\}$
is an orthogonal system in $T_{\alpha}\hz$ for any $t\in S^{1}$. 
Thus any $V \in T_\alpha H^{2,2}(S^1,\hz)$ may be written
as
\begin{align*}
V= \lambda_1 \dot\alpha + \lambda_2 (\alpha\cross_m \dot\alpha)   
\end{align*}
for some functions $\lambda_1,\,\lambda_2 \in H^{2,2}(S^{1},\rz)$.
Using the fact that
\begin{align*}
D_{t,g_0} \dot\alpha &= |\dot \alpha|_m k_0(\alpha \cross_m \dot\alpha) \text{ and }
D_{t,g_0} (\alpha \cross_m \dot\alpha) = -|\dot \alpha|_m k_0 \dot\alpha,    
\end{align*}
we obtain
\begin{align}
\label{eq:d_x_g0_alpha}
D_{g_0}X_{k_0,g_0,\hz}\eval_{\alpha}(V) &= (-D_{t,g_0}^2+1)^{-1}
\big((-\lambda_1''+2\pi\sqrt{1+r^{2}} \lambda_2') \dot\alpha \notag\\
&\qquad +(-\lambda_2''-(2\pi)^{2}\lambda_2)(\alpha\cross_m \dot\alpha)\big). 
\end{align}
Concerning $W_1,\dots, W_3$ and $W_{g_0}$ we find 
\begin{align}
W_1(t) &= \dot\alpha(t),\notag \\
W_2(t) &= -\frac{1}{2\pi r}\big(\sqrt{1+r^{2}} \sin(2\pi t)\dot\alpha(t) + \cos(2\pi t)(\alpha\cross_m
\dot\alpha)\big),\notag \\
W_3(t) &= -\frac{1}{2\pi r}\big(-\sqrt{1+r^{2}} \cos(2\pi t)\dot\alpha(t) 
+ \sin(2\pi t)(\alpha\cross_m \dot\alpha)\big)\notag \\
\label{eq:8}
W_{g_0}(\alpha) &= (1+|\dot\alpha|_m^{2}k_0^{2})^{-1}\dot\alpha 
=  (1+4\pi^2(1+r^2))^{-1} W_1. 
\end{align}

\begin{lemma}
\label{l:kernel_not_in_range}
If $r\neq (2\pi)^{-1}$, then we have
for $\alpha \in \mathcal{Z}_{\hz}$
\begin{align*}
\{0\}&=\langle W_1,W_2,W_3\rangle \cap R\big(D_{g_0}X_{k_0,g_0,\hz}\eval_{\alpha}\big) ,\\
\langle W_1\rangle^{\perp}&=  
\langle W_2,W_3\rangle \oplus R\big(D_{g_0}X_{k_0,g_0,\hz}\eval_{\alpha}\big)
\end{align*}
\end{lemma}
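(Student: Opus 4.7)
The plan is to exploit the explicit formula \eqref{eq:d_x_g0_alpha} for the linearization together with a Fredholm count. By \eqref{eq:kernel_DX_0}, the kernel of $D_{g_0}X_{k_0,g_0,\hz}\eval_\alpha$ is three-dimensional, spanned by $W_1, W_2, W_3$. Because this operator is of the form identity$\,+\,$compact, it is Fredholm of index $0$, so its range has codimension $3$ in $T_\alpha H^{2,2}(S^1,\hz)$. From \eqref{eq:dx_to_wg_perp} and \eqref{eq:8}, $R\bigl(D_{g_0}X_{k_0,g_0,\hz}\eval_\alpha\bigr) \subset \langle W_{g_0}(\alpha)\rangle^\perp = \langle W_1\rangle^\perp$, so the range has codimension $2$ inside $\langle W_1\rangle^\perp$.

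First I would verify that $W_2, W_3 \in \langle W_1\rangle^\perp$. Working in the $g_0$-orthogonal frame $\{\dot\alpha, \alpha\cross_m\dot\alpha\}$ from \eqref{eq:8} and using the parallel-transport identities $D_{t,g_0}\dot\alpha = 2\pi\sqrt{1+r^2}(\alpha\cross_m\dot\alpha)$ and $D_{t,g_0}(\alpha\cross_m\dot\alpha) = -2\pi\sqrt{1+r^2}\dot\alpha$, one computes that the coefficients of $(-D_{t,g_0}^2+1)W_2$ and $(-D_{t,g_0}^2+1)W_3$ in this frame are pure trigonometric functions in $\cos(2\pi t)$ and $\sin(2\pi t)$, whereas $(-D_{t,g_0}^2+1)W_1 = (4\pi^2(1+r^2)+1)\dot\alpha$ is a constant multiple of $\dot\alpha$. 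The $H^{2,2}$-inner products $\langle W_2, W_1\rangle$ and $\langle W_3, W_1\rangle$ therefore reduce to integrals of $\sin$ and $\cos$ over $S^1$ and vanish.

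The core step will be to prove $\langle W_2, W_3\rangle \cap R\bigl(D_{g_0}X_{k_0,g_0,\hz}\eval_\alpha\bigr) = \{0\}$. Writing any $V = \lambda_1 \dot\alpha + \lambda_2 (\alpha\cross_m\dot\alpha)$, formula \eqref{eq:d_x_g0_alpha} shows that $Y$ lies in the range iff there exist $\lambda_1, \lambda_2 \in H^{2,2}(S^1,\rz)$ with
\[
(-D_{t,g_0}^2+1)Y = (-\lambda_1'' + 2\pi\sqrt{1+r^2}\,\lambda_2')\dot\alpha + (-\lambda_2'' - (2\pi)^2\lambda_2)(\alpha\cross_m\dot\alpha).
\]
Fourier-expanding $\lambda_1, \lambda_2$ on $S^1$ shows that the right-hand side realizes exactly the pairs of coefficient functions $(u_1, u_2)$ with $u_1$ of zero mean and with the Fourier coefficients of $u_2$ at $n = \pm 1$ vanishing (for other frequencies the system is uniquely solvable). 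A direct parallel-transport computation then gives the $(\alpha\cross_m\dot\alpha)$-component of $(-D_{t,g_0}^2+1)W_2$ equal to $\frac{4\pi^2 r^2-1}{2\pi r}\cos(2\pi t)$, and the analogous component for $W_3$ equals $\frac{4\pi^2 r^2-1}{2\pi r}\sin(2\pi t)$. Under the hypothesis $r \neq (2\pi)^{-1}$ the factor $4\pi^2 r^2 - 1$ is nonzero, so if $bW_2 + cW_3 \in R$ then $b\cos(2\pi t) + c\sin(2\pi t) \equiv 0$, forcing $b = c = 0$.

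The lemma then follows quickly. For the first claim, an element $aW_1 + bW_2 + cW_3 \in R \subset \langle W_1\rangle^\perp$ combined with $W_2, W_3 \in \langle W_1\rangle^\perp$ yields $aW_1 \in \langle W_1\rangle^\perp$, hence $a = 0$, and the Fourier argument then gives $b = c = 0$. For the second claim, $\langle W_2, W_3\rangle$ is a two-dimensional subspace of $\langle W_1\rangle^\perp$ meeting $R$ only in $\{0\}$, which matches the codimension of $R$ inside $\langle W_1\rangle^\perp$, so the two subspaces are complementary. The main obstacle I anticipate is the bookkeeping in the parallel-transport expansion that isolates the resonance factor $4\pi^2 r^2 - 1$; everything else is either linear algebra or a standard Fourier inversion.
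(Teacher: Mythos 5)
Your proposal is correct and follows essentially the same route as the paper's own proof: both rely on the explicit computation of $(-D_{t,g_0}^2+1)W_i$ in the orthogonal frame $\{\dot\alpha,\alpha\cross_m\dot\alpha\}$, characterize the image of the linearization by noting that $-\lambda_2''-(2\pi)^2\lambda_2$ is $L^2$-orthogonal to $\langle\cos(2\pi t),\sin(2\pi t)\rangle$ while the $\dot\alpha$-coefficient has zero mean, use the hypothesis $r\neq(2\pi)^{-1}$ to isolate the resonance factor $4\pi^2r^2-1$, and conclude via the Fredholm index count. The only superficial difference is that you state the index/codimension bookkeeping up front and then verify the intersection is trivial, whereas the paper proves the trivial intersection first and invokes the Fredholm index $0$ at the end; the substance is identical, and your computed coefficients for the $(\alpha\cross_m\dot\alpha)$-components of $(-D_{t,g_0}^2+1)W_2$ and $(-D_{t,g_0}^2+1)W_3$ agree with equations \eqref{eq:D2_1_w2}--\eqref{eq:D2_1_w3}.
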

\begin{proof}
For $\lambda_1,\lambda_2 \in H^{2,2}(S^{1},\rz)$ we have
\begin{align*}
 (-D_{t,g_0}^2+1)&\big(\lambda_1 \dot\alpha + \lambda_2(\alpha \cross_m \dot\alpha)\big)\\
&= \big(-\lambda_1''+4\pi\sqrt{1+r^{2}} \lambda_2' +(4\pi^{2}(1+r^{2})+1) \lambda_1\big)\dot\alpha\\
&\quad +\big(-\lambda_2''-4\pi\sqrt{1+r^{2}} \lambda_1' +(4\pi^{2}(1+r^{2})+1) \lambda_2\big)
\alpha \cross_m \dot\alpha
\end{align*}
Hence we get by direct calculations
\begin{align}
(-D_{t,g_0}^2+1)(W_1) &= (4\pi^{2}(1+r^{2})+1) \dot\alpha,\notag \\
(-D_{t,g_0}^2+1)(-2\pi r W_2) &= \sqrt{1+r^{2}}(4\pi^{2}r^{2} +1) \sin(2\pi t) \dot\alpha \notag \\
\label{eq:D2_1_w2}
&\quad + (-4\pi^{2} r^{2}+1)\cos(2\pi t) (\alpha \cross_m \dot\alpha),\\
(-D_{t,g_0}^2+1)(-2\pi r W_3) &= -\sqrt{1+r^{2}}(4\pi^{2}r^{2} +1) \cos(2\pi t) \dot\alpha\notag\\
\label{eq:D2_1_w3}
&\quad + (-4\pi^{2} r^{2}+1)\sin(2\pi t) (\alpha \cross_m \dot\alpha).
\end{align}
Consequently, by (\ref{eq:dx_to_wg_perp}), \eqref{eq:8}, and the above computations
$W_1$ is orthogonal  
to $\langle W_2,W_3\rangle$ and to $R\big(D_{g_0}X_{k_0,g_0,\hz}\eval_{\alpha}\big)$
in $T_\alpha H^{2,2}(S^1,\hz)$.
As in $L^{2}(S^{1},\rz)$
\begin{align*}
\lambda_2''+(2\pi)^{2}\lambda_2 \perp_{L^2} \langle \cos(2\pi t),\sin(2\pi t)\rangle,\;
\langle \lambda_1'',\lambda_2'\rangle  \perp_{L^2} {\rm const},
\end{align*}
we get from \eqref{eq:d_x_g0_alpha} and the fact that $1-4\pi^2r^2\neq 0$
\begin{align*}
\{0\} &=  (-D_{t,g_0}^2+1)\big(\langle W_1,W_2,W_3\rangle\big)\\
&\qquad \cap 
(-D_{t,g_0}^2+1)D_{g_0}X_{k_0,g_0,\hz}\eval_{\alpha}(T_{\alpha}H^{2,2}(S^{1},\hz))   
\end{align*}
and the claim follows for $D_{g_0}X_{k_0,g_0,\hz}\eval_{\alpha}$ is a Fredholm operator of index $0$.
\end{proof}
Moreover, we see for $\alpha \in \mathcal{Z}_{\hz}$
\begin{align}
\label{eq:4}
R(D_{g_0}X_{k_0,g_0,\hz}\eval_{\alpha}) &= 
\big\{
(-D_{t,g_0}^2+1)^{-1}\big(
(-\lambda_1''+2\pi\sqrt{1+r^2}\lambda_2')\dot{\alpha} \notag\\
&\qquad -(\lambda_2''+(2\pi)^2\lambda_2)(\alpha \cross_m \dot{\alpha})\big)
\where \lambda_1,\,\lambda_2 \in H^{2,2}(S^1,\rz)
\big\} \notag \\
&=
\big\{
(-D_{t,g_0}^2+1)^{-1}\big(
\lambda_1\dot{\alpha}+\lambda_2(\alpha \cross_m \dot{\alpha})\big)
\where
\lambda_i \in L^2(S^1,\rz), \notag \\
&\qquad  \lambda_1 \perp_{L^2} 1,\,
\lambda_2 \perp_{L^2} \langle \cos(2\pi t),\sin(2\pi t)\rangle
\big\}\notag \\
&= \langle (\alpha \cross_m \dot{\alpha}) \rangle \oplus E_{+},
\end{align}
where $E_{+}$ is given by
\begin{align*}
E_{+} &=
\big\{
(-D_{t,g_0}^2+1)^{-1}\big(
\lambda_1\dot{\alpha}+\lambda_2(\alpha \cross_m \dot{\alpha})\big)
\where\\
&\qquad \lambda_i\in L^2(S^1,\rz),\, \lambda_1 \perp_{L^2}1,\,
\lambda_2 \perp_{L^2} \langle 1, \cos(2\pi t),\sin(2\pi t)\rangle
\big\}   
\end{align*}
We have for $V=\lambda_1\dot{\alpha}+\lambda_2 (\alpha \cross_m \dot{\alpha})$ in $T_\alpha H^{2,2}(S^{1},\hz)$ 
\begin{align*}
D_{g_0}X_{k_0,g_0,\hz}\eval_{\alpha}(V) \in E_{+} \aequi \lambda_2 \perp_{L^{2}} 1
\aequi V \perp_{L^{2}} (\alpha \cross_m \dot{\alpha}).  
\end{align*}
We fix
\begin{align*}
V=(-D_{t,g_0}^{2}+1)^{-1}(\lambda_1\dot{\alpha}+\lambda_2 (\alpha \cross_m \dot{\alpha})) 
\in E_{+}.  
\end{align*} 
Then
\begin{align*}
\int_{S^{1}} \langle V,&\alpha \cross_m \dot{\alpha}\rangle_m\\
&=
\int_{S^{1}} \langle
(-D_{t,g_0}^{2}+1)^{-1}(\lambda_1\dot{\alpha}+\lambda_2 (\alpha \cross_m \dot{\alpha})),
\alpha \cross_m \dot{\alpha}\rangle_m\\
&= \int_{S^{1}} \langle\lambda_1\dot{\alpha}+\lambda_2 (\alpha \cross_m \dot{\alpha}) 
,(-D_{t,g_0}^{2}+1)^{-1}(\alpha \cross_m \dot{\alpha})\rangle_m\\
&= (4\pi^{2}(1+r^{2})+1)^{-1}\int_{S^{1}} 
\langle \lambda_1\dot{\alpha}+\lambda_2 (\alpha \cross_m \dot{\alpha}), 
\alpha \cross_m \dot{\alpha}\rangle_m=0.
\end{align*}
Consequently, $D_{g_0}X_{k_0,g_0,\hz}\eval_{\alpha}(E_{+})=E_{+}$.\\
$E_+$ is $L^{2}$-orthogonal to $\alpha \times_m \dot\alpha$ and $\dot\alpha$, we may thus write
\begin{align*}
V &= (\nu_1+f_1)\dot{\alpha}+(\nu_2+f_2)(\alpha \cross_m \dot{\alpha}),  
\end{align*}
where 
\begin{align*}
\nu_1,\nu_2 \perp_{L^2} \langle 1, \sin(2\pi \cdot),\cos(2\pi \cdot)\rangle 
\text{ and }
f_1,f_2 \in \langle \sin(2\pi \cdot),\cos(2\pi \cdot)\rangle.  
\end{align*}
Then 
\begin{align}
\label{eq:5}
\langle (-D_{t,g_0}^{2}&+1) D_{g_0}X_{k_0,g_0,\hz}\eval_{\alpha}(V),V\rangle_{L^{2}} \notag \\
&=
\int_{S^{1}} (\nu_1')^2-2\pi \sqrt{1+r^2}\nu_1'\nu_2+(\nu_2')^2-4\pi^2(\nu_2)^2 \notag \\
&\quad \int_{S^{1}} (f_1')^2-2\pi \sqrt{1+r^2}f_1'f_2.
\end{align}
Since $\nu_2 \perp_{L^2} \langle 1,\, \cos(2\pi\cdot),\sin(2\pi\cdot)\rangle$
we have
\begin{align*}
\int_{S^{1}}(\nu_2')^{2}-4\pi^{2}(\nu_2)^{2}\ge \int_{S^{1}}16\pi^{2}(\nu_2)^{2}  
\end{align*}
and for $0<r\le 1$
\begin{align*}
\int_{S^{1}} (\nu_1')^2&-2\pi \sqrt{1+r^2}\nu_1'\nu_2+(\nu_2')^2-4\pi^2(\nu_2)^2\\
&\ge 
\int_{S^{1}} (\nu_1')^2-\frac{1}{4}(\nu_1')^2-4\pi^2(1+r^2)(\nu_2)^2
+(\nu_2')^2-4\pi^2(\nu_2)^2\\
&\ge 
\int_{S^{1}} \frac{3}{4}(\nu_1')^2+4\pi^2(\nu_2)^2.  
\end{align*}
Concerning the remaining term in \eqref{eq:5} we note that
as $(-D_{t,g_0}^{2}+1)$ maps 
\begin{align*}
\big\{\lambda_1 \dot{\alpha}+ \lambda_2 (\alpha \cross_m \dot{\alpha}) \where
\lambda_1,\lambda_2 \in \langle \sin(2\pi \cdot),\cos(2\pi \cdot)\rangle 
\big\}
\end{align*}
into itself and $V \in E_{+}$ there holds 
\begin{align*}
f_1 \dot{\alpha}+f_2(\alpha \cross_m \dot{\alpha})
\in (-D_{t,g_0}^{2}+1)^{-1}
\big\langle \cos(2\pi \cdot) \dot\alpha,\sin(2\pi \cdot) \dot\alpha\big\rangle. 
\end{align*}
Hence, by explicit computations there are $x,y \in \rz$ satisfying
\begin{align*}
f_1(t) &= x \cos(2\pi t)+ y \sin(2\pi t),\\
f_2(t) &= \frac{8\pi^2\sqrt{1+r^2}}{4\pi^2(2+r^2)+1}
\big(
y\cos(2\pi t)-x\sin(2\pi t)
\big),  
\end{align*}
such that
\begin{align*}
\int_{S^{1}} (f_1')^2&-2\pi \sqrt{1+r^2}f_1'f_2
= \frac{2\pi^2(1-4\pi^2r^2)}{4\pi^2(2+r^2)+1}(x^2+y^2).
\end{align*}
This shows that if $r < (2\pi)^{-1}$, then
\begin{align*}
\langle (-D_{t,g_0}^{2}+1) D_{g_0}X_{k_0,g_0,\hz}\eval_{\alpha}(V),V\rangle_{L^{2}} >0
\text{ for all } V \in E_{+}\setminus \{0\},  
\end{align*}
and the homotopy
\begin{align*}
[0,1] \ni s \mapsto (1-s) \big(D_{g_0}X_{k_0,g_0,\hz}\eval_{\alpha}\big)\eval_{E_{+}} +s\, id\eval_{E_{+}}  
\end{align*}
is admissible.
We use the decomposition in \eqref{eq:4} and
\begin{align*}
D_{g_0}X_{k_0,g_0,\hz}\eval_{\alpha}(\alpha \cross_m \dot{\alpha}) = -\frac{4 \pi^2}{4\pi^2(1+r^2)+1} 
(\alpha \cross_m \dot{\alpha})   
\end{align*}
to see that under the assumption $r < (2\pi)^{-1}$
\begin{align*}
\big(D_{g_0}X_{k_0,g_0,\hz}\eval_{\alpha}\big)\eval_{R(D_{g_0}X_{k_0,g_0,\hz}\eval_{\alpha})} 
\sim 
\begin{pmatrix}
-1 & 0\\
0 &   id\eval_{E_{+}}
\end{pmatrix}.
\end{align*} 
Consequently, for $r < (2\pi)^{-1}$
\begin{align}
\label{eq:sgn_dx_g0_range}
\sgn \big(D_{g_0}X_{k_0,g_0,\hz}\eval_{\alpha}\big)\eval_{R(D_{g_0}X_{k_0,g_0,\hz}\eval_{\alpha})} =-1.
\end{align}
We remark that
the formula for the degree continues to hold for $r > (2\pi)^{-1}$.\\
From \eqref{eq:3} and the fact that $(\pi_M)_*$ is an isometry we obtain
for $\tilde{\alpha} \in \mathcal{Z}_{M}$
\begin{align}
\label{eq:6}
\{0\}&=\langle \tilde{W}_1,\tilde{W}_2,\tilde{W}_3\rangle 
\cap R\big(D_{g_0}X_{k_0,g_0}\eval_{\tilde{\alpha}}\big) ,\notag \\
\langle \tilde{W}_1\rangle^{\perp}&=  
\langle \tilde{W}_2,\tilde{W}_3\rangle \oplus R\big(D_{g_0}X_{k_0,g_0}
\eval_{\tilde{\alpha}}\big), 
\notag\\
-1 &= sgn \big(D_{g_0}X_{k_0,g_0}\eval_{\tilde{\alpha}}\big)
\eval_{R(D_{g_0}X_{k_0,g_0}\eval_{\tilde{\alpha}})}.
\end{align}
We fix $\tilde{\alpha}_0 \in \mathcal{Z}_{M}$ and a parametrization $\phi$ of $\mathcal{Z}_{M}$, 
which maps an open neighborhood of 
$0$ in $\langle \tilde{W}_1(\tilde{\alpha}_0),\tilde{W}_2(\tilde{\alpha}_0),\tilde{W}_3(\tilde{\alpha}_0)\rangle$ into $\mathcal{Z}_{M}$, such that
\begin{align*}
\phi(0)=\tilde{\alpha}_0 \text{ and } D\phi\eval_0=id.  
\end{align*}
As $\mathcal{Z}_{M}$ consists of smooth functions, 
$\mathcal{Z}_{M}$ is a sub-manifold of $H^{m,2}(S^{1},M)$ for $1\le m<\infty$.
We define a map $\Phi$ from an open neighborhood $\mathcal{U}$
of $0$ in
\begin{align*}
T_{\tilde{\alpha}_0}H^{2,2}(S^{1},M)=\langle \tilde{W}_1(\tilde{\alpha}_0),\tilde{W}_2(\tilde{\alpha}_0),\tilde{W}_3(\tilde{\alpha}_0)\rangle \oplus 
\text{Range}(D_{g_0}X_{k_0,g_0}\eval_{\tilde{\alpha}_0})  
\end{align*}
to $H^{2,2}(S^{1},M)$ by 
\begin{align}
\label{eq:15}
\Phi(W,U):= Exp_{\tilde{\alpha}_0,g_0}\big(Exp_{\tilde{\alpha}_0,g_0}^{-1}(\phi(W))+U\big).  
\end{align}
Then $(\Phi,\mathcal{U})$ is a chart
of $H^{2,2}(S^{1},M)$ around $\tilde{\alpha}_0$ such that
$\mathcal{U}$ is an open neighborhood of $0$ in $T_{\tilde{\alpha}_0}H^{2,2}(S^{1},M)$, 
$\Phi(0)=\tilde{\alpha}_0$, and
\begin{align*}
 D\Phi\eval_0=id,\; 
\Phi^{-1}\big(\mathcal{Z}_{M}\cap \Phi(\mathcal{U})\big)= \mathcal{U}\cap \langle \tilde{W}_1(\tilde{\alpha}_0),\tilde{W}_2(\tilde{\alpha}_0),\tilde{W}_3(\tilde{\alpha}_0)\rangle.
\end{align*}
From the properties of $Exp_{\tilde{\alpha}_0,g_0}$ the map $\Phi$ is a chart of $H^{k,2}(S^{1},M)$ 
around
$\tilde{\alpha}_0$ for any $1\le k \le 4$ and shrinking $\mathcal{U}$ we may assume that
\begin{align}
\label{eq:def_trans_1_phi}
T_{\Phi(V)}H^{1,2}(S^{1},M) 
&= \langle \frac{d}{dt}\Phi(V)\rangle \oplus D\Phi\eval_V(\langle \dot\tilde{\alpha}_0\rangle^{\perp,H^{1,2}}),\\
\label{eq:def_trans_2_phi}
T_{\Phi(V)}H^{2,2}(S^{1},M) 
&= \langle W_{g_0}(\Phi(V))\rangle \oplus D\Phi\eval_V(\langle W_{g_0}(\tilde{\alpha}_0)\rangle^{\perp}),\\
\label{eq:def_trans_3_phi}
\text{Proj}_{\langle W_{g_0}(\Phi(V)\rangle^\perp}\circ D \Phi\eval_{V}&:\,
\langle W_{g_0}(\tilde{\alpha}_0)\rangle^\perp \xpfeil{\iso}{} \langle W_{g_0}(\Phi(V)\rangle^\perp,
\end{align}
and the norm of the projections in (\ref{eq:def_trans_1_phi}) and (\ref{eq:def_trans_2_phi})
as well as the norm of the map in (\ref{eq:def_trans_3_phi}) 
and its inverse are uniformly bounded with respect to $V$.

\section{The perturbative analysis}
\label{sec:perturbed}
For $\tilde{\alpha}_0 \in \mathcal{Z}_{M}$ 
the vectors $\tilde{W}_1(\tilde{\alpha}_0)$ and $W_{g_0}(\tilde{\alpha}_0)$ are collinear
and we use $\langle \tilde{W}_1(\tilde{\alpha}_0)\rangle$ 
instead of $\langle W_{g_0}(\tilde{\alpha}_0)\rangle$
in the analysis below.\\
We define a $S^{1}$-invariant 
vector bundle $SH^{2,2}(S^{1},M)$ by
\begin{align*}
SH^{2,2}(S^{1},M) := \{(\gamma,V)\in TH^{2,2}(S^{1},M)
\where \gamma \neq {\rm const},\,V\in \langle W_g(\gamma)\rangle^\perp\}.   
\end{align*}
As in \cite[Sec. 4]{arXiv:0808.4038} we obtain
a chart $\Psi$ for the bundle $SH^{2,2}(S^{1},M)$ around $(\tilde{\alpha}_0,0)$ by,
\begin{align*}
\Psi: \mathcal{U}\times \mathcal{U}\cap \langle \tilde{W}_1(\tilde{\alpha}_0)\rangle^{\perp}
\to SH^{2,2}(S^{1},M),\\
\Psi(V,U):= \big(\Phi(V),Proj_{\langle W_{g_0}(\Phi(V))\rangle^{\perp}}\circ D\Phi\eval_V(U)\big).
\end{align*}
We define
\begin{align*}
X_{g_0,\eps}^{\Phi}:\: \mathcal{U}\cap \langle \tilde{W}_1(\tilde{\alpha}_0)\rangle^{\perp}
\to \langle \tilde{W}_1(\tilde{\alpha}_0)\rangle^{\perp}   
\end{align*}
by
\begin{align*}
X_{g_0,\eps}^{\Phi}(V) := Proj_2 \circ \Psi^{-1}\big(\Phi(V),X_{g_0,\eps}(\Phi(V))\big).  
\end{align*}
As in \cite[Lem. 3.5]{arXiv:0808.4038} it is easy to see that
\begin{align}
\label{eq:nondegenerate_phi}
V \in \mathcal{U}\cap \langle \tilde{W}_1(\tilde{\alpha}_0)\rangle^{\perp}
\text{ is a (nondegenerate) zero of } X_{g_0,\eps}^{\Phi} \text{ if and only if } \notag \\
S^{1}*\Phi(V) \text{ is a (nondegenerate) critical orbit of } X_{g_0,\eps},
\end{align}
and
if $X_{g_0,\eps}^{\Phi}(V)=0$, then
\begin{align}
\label{eq:d_x_phi}
D_{g_0}X_{g_0,\eps}^{\Phi}\eval_V = A_V^{-1}\circ D_{g_0}X_{g_0,\eps}\eval_{\Phi(V)} \circ D\Phi\eval_V,  
\end{align}
where the isomorphism $A_V:\: \langle \tilde{W}_1(\tilde{\alpha}_0)\rangle^{\perp} \to \langle W_{g_0}(\Phi(V))\rangle^{\perp}$ 
is given by
\begin{align*}
A_V = Proj_{\langle W_{g_0}(\Phi(V))\rangle^{\perp}}\circ D\Phi\eval_V.  
\end{align*} 
From Lemma \ref{l:kernel_not_in_range} we may assume
\begin{align*}
\mathcal{U}\cap \langle \tilde{W}_1(\tilde{\alpha}_0)\rangle^{\perp} = \mathcal{U}_1\times \mathcal{U}_2,   
\end{align*}
where $\mathcal{U}_1$ and $\mathcal{U}_2$ are open neighborhoods of $0$ 
in $\langle \tilde{W}_2(\tilde{\alpha}_0),\tilde{W}_3(\tilde{\alpha}_0)\rangle$
and $R\big(D_{g_0}X_{k_0,g_0}\eval_{\tilde{\alpha}_0}\big)$.  
We denote for $\tilde{\alpha} \in \mathcal{Z}_{M}$ by $P_2(\tilde{\alpha})$ the projection onto 
$R(D_{g_0}X_{g_0,0}\eval_{\tilde{\alpha}})$ with respect to the decomposition
\begin{align*}
\langle \tilde{W}_1(\tilde{\alpha})\rangle^{\perp}&=  
\langle \tilde{W}_2(\tilde{\alpha}),\tilde{W}_3(\tilde{\alpha})\rangle \oplus R\big(D_{g_0}X_{k_0,g_0}\eval_{\tilde{\alpha}}\big),
\end{align*}
and by $P_1(\tilde{\alpha})$ the projection onto $\langle \tilde{W}_2(\tilde{\alpha}),\tilde{W}_3(\tilde{\alpha})\rangle$. 
Moreover, for $W \in \mathcal{U}_1$ we define for $i=1,2$
\begin{align*}
P_i^{\Phi}(W):= (A_W)^{-1}\circ P_i(\Phi(W)) \circ A_W.
\end{align*}
The projections $P_1^{\Phi}(W)$ and $P_2^{\Phi}(W)$ correspond to the decomposition
\begin{align}
\label{eq:decomp_phi_w}
\langle \tilde{W}_1(\tilde{\alpha}_0)\rangle^{\perp}&=  
\langle \tilde{W}_2(\tilde{\alpha}_0),\tilde{W}_3(\tilde{\alpha}_0)\rangle \oplus R\big(D_{g_0}X_{g_0,0}^{\Phi}\eval_{W}\big),
\end{align}
as we have for $W \in \mathcal{U}_1$
\begin{align*}
D_{g_0}X_{g_0,0}^{\Phi}\eval_W = A_W^{-1}\circ D_{g_0}X_{g_0,0}\eval_{\Phi(W)} \circ A_W.  
\end{align*}
Moreover, for $\tilde{\alpha} \in \mathcal{Z}_{M}$ the vector field $K_1(\tilde{\alpha})$ is
orthogonal to $\tilde{W}_1(\tilde{\alpha})$ and we may define a vector field on $\mathcal{Z}_{M}$ by
\begin{align*}
\mathcal{Z}_{M} \ni \tilde{\alpha} \mapsto P_1(\tilde{\alpha}) \circ K_1(\tilde{\alpha}) \in 
\langle \tilde{W}_2(\tilde{\alpha}),\tilde{W}_3(\tilde{\alpha})\rangle.   
\end{align*}
Note that $P_1(\cdot) \circ K_1(\cdot)$ is $S^1$-equivariant, i.e.
\begin{align*}
\theta * \big(P_1(\tilde{\alpha}) \circ K_1(\tilde{\alpha})\big) 
= P_1(\theta*\tilde{\alpha}) \circ K_1(\theta*\tilde{\alpha}) \text{ for all }(\theta,\tilde{\alpha})
\in S^1\times \mathcal{Z}_{M}.  
\end{align*}
If $P_1(\tilde{\alpha}_0) \circ K_1(\tilde{\alpha}_0) =0$ for some $\tilde{\alpha}_0 \in \mathcal{Z}_{M}$
differentiating the identity 
\begin{align*}
0 \equiv 
\langle P_1(\tilde{\alpha}) \circ K_1(\tilde{\alpha}), \tilde{W}_1(\tilde{\alpha})\rangle  
\end{align*}
we find that the covariant derivative 
$D_{\mathcal{Z}_{M}}\big(P_1(\cdot) \circ K_1(\cdot)\big)\eval_{\tilde{\alpha}_0}$
maps 
\begin{align*}
T_{\tilde{\alpha}_0}\mathcal{Z}_{M} 
= \langle \tilde{W}_1(\tilde{\alpha}_0),\tilde{W}_2(\tilde{\alpha}_0),
\tilde{W}_3(\tilde{\alpha}_0)\rangle  
\end{align*}
to $\langle \tilde{W}_2(\tilde{\alpha}_0),\tilde{W}_3(\tilde{\alpha}_0)\rangle$
and the $S^1$ equivariance leads to
\begin{align*}
D_{\mathcal{Z}_{M}}\big(P_1(\cdot) \circ K_1(\cdot)\big)\eval_{\tilde{\alpha}_0}
\big(\tilde{W}_1(\tilde{\alpha}_0)\big) =0.  
\end{align*}
Consequently, we say that $S^1*\tilde{\alpha}_0 \in \mathcal{Z}_{M}$ is a nondegenerate zero orbit
of $P_1(\cdot) \circ K_1(\cdot)$, if $P_1(\tilde{\alpha}_0) \circ K_1(\tilde{\alpha}_0)=0$ and
\begin{align*}
D_{\mathcal{Z}_{M}}\big(P_1(\cdot) \circ K_1(\cdot)\big)\eval_{\tilde{\alpha}_0}:\:
\langle \tilde{W}_2(\tilde{\alpha}_0),\tilde{W}_3(\tilde{\alpha}_0)\rangle \to \langle \tilde{W}_2(\tilde{\alpha}_0),\tilde{W}_3(\tilde{\alpha}_0)\rangle  
\end{align*}
is invertible.\\
Using the above notation the perturbative analysis done in \cite{arXiv:0808.4038}
carries over and we state the following four results without proof
(see \cite[Lem. 5.2-5.5]{arXiv:0808.4038}).  
\begin{lemma}
\label{l:implicit_function}
For $\tilde{\alpha}_0\in \mathcal{Z}$ after possibly shrinking $\mathcal{U}$
there are $\eps_0>0$ and 
\begin{align*}
U &\in C^2([-\eps_0,\eps_0]\times \mathcal{U}_1,\langle \tilde{W}_1(\tilde{\alpha}_0)\rangle^\perp),\\
R &\in C^2([-\eps_0,\eps_0]\times \mathcal{U}_1,\langle \tilde{W}_2(\tilde{\alpha}_0),\tilde{W}_3(\tilde{\alpha}_0)\rangle),
\end{align*}
such that for all $(\eps,W) \in [-\eps_0,\eps_0]\times \mathcal{U}_1$ 
\begin{align*}
R(\eps,W) &=X_{g_0,\eps}^{\Phi}(W+U(\eps,W)),\\
0&= P_1^\Phi(W)\circ U(\eps,W),\\
O(\eps)_{\eps \to 0} &=\|U(\eps,W)\|+\|D_W U(\eps,W)\|+\|R(\eps,W)\|+\|D_W R(\eps,W)\|,\\
R(\eps,W)&= \eps P_1^\Phi(W)\circ K_1^{\Phi}(W)+o(\eps)_{\eps \to 0},\\
U(\eps,W) &= -\eps (D_{g_0}X_{g_0,0}^\Phi\eval_{W})^{-1}\circ  P_2^\Phi(W)\circ K_1^{\Phi}(W)+o(\eps)_{\eps \to 0}.  
\end{align*}
Moreover, the functions $U(\eps, W)$ and $R(\eps,W)$ are unique, in the sense that, 
if $(\eps,W,U,R)$ in $[-\eps_0,\eps_0]\times \mathcal{U}_1\times \mathcal{U}\cap\langle \tilde{W}_1(\tilde{\alpha}_0)\rangle^\perp
\times \mathcal{U}_1$ 
satisfies
\begin{align*}
X_{g_0,\eps}^{\Phi}(W+U)=R \text{ and }
P_1^\Phi(W)\big(U\big) =0,  
\end{align*}
then $U=U(\eps,W)$ and $R=R(\eps,W)$.
\end{lemma}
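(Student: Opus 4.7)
The statement is a standard Lyapunov--Schmidt reduction, realized as a parametric implicit function theorem in Banach spaces: one splits the equation $X_{g_0,\eps}^{\Phi}(W+U)=R$ along the decomposition \eqref{eq:decomp_phi_w}, solves the ``range part'' for $U$ by IFT, and then reads off $R$ from the ``kernel part.'' The plan is to carry this out using the nondegeneracy information already assembled in Section \ref{sec:unperturbed}, specifically \eqref{eq:6}, which is precisely what makes $D_U$ of the range equation invertible at the base point.

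Concretely, I would consider the $C^2$ map
\begin{equation*}
G:(-\eps_0,\eps_0)\times \mathcal{U}_1 \times \mathcal{U}_2 \to R\big(D_{g_0}X_{g_0,0}^{\Phi}\eval_0\big),\qquad
G(\eps,W,U):=P_2^{\Phi}(W)\circ X_{g_0,\eps}^{\Phi}(W+U).
\end{equation*}
At $(0,0,0)$: $\Phi(0)=\tilde\alpha_0$ is a zero of $X_{k_0,g_0}=X_{g_0,0}$, so by \eqref{eq:nondegenerate_phi} one has $X_{g_0,0}^{\Phi}(0)=0$ and hence $G(0,0,0)=0$. Using $D\Phi\eval_0=\text{id}$ and \eqref{eq:d_x_phi} with $\eps=0$, $W=0$, the partial derivative $D_U G\eval_{(0,0,0)}$ equals $P_2^{\Phi}(0)\circ D_{g_0}X_{g_0,0}^{\Phi}\eval_0$ restricted to $\mathcal{U}_2\subset R(D_{g_0}X_{k_0,g_0}\eval_{\tilde\alpha_0})$, and \eqref{eq:6} guarantees that this restriction is an isomorphism. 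The parametric IFT then yields, after shrinking $\mathcal{U}_1$ and choosing $\eps_0>0$ sufficiently small, a unique $C^2$ function $U(\eps,W)\in\mathcal{U}_2$ with $G(\eps,W,U(\eps,W))\equiv 0$ and $U(0,0)=0$. Since $\mathcal{U}_2$ lies in the range of $D_{g_0}X_{g_0,0}^{\Phi}\eval_W$ for $W$ small (by continuity of the decomposition \eqref{eq:decomp_phi_w}), the relation $P_1^{\Phi}(W)\circ U(\eps,W)=0$ is automatic. I then define $R(\eps,W):=P_1^{\Phi}(W)\circ X_{g_0,\eps}^{\Phi}(W+U(\eps,W))$, which takes values in $\langle \tilde{W}_2(\tilde\alpha_0),\tilde{W}_3(\tilde\alpha_0)\rangle$.

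For the asymptotic expansions, I would expand $X_{g_0,\eps}^{\Phi}=X_{g_0,0}^{\Phi}+\eps K_1^{\Phi}+O(\eps^2)$ (the $\eps$-linearity of $X_{g_0,\eps}$ passes to the chart) and differentiate $G(\eps,W,U(\eps,W))\equiv 0$ in $\eps$ at $\eps=0$ to obtain
\begin{equation*}
D_\eps U(0,W)=-\big(D_{g_0}X_{g_0,0}^{\Phi}\eval_W\big)^{-1}\circ P_2^{\Phi}(W)\circ K_1^{\Phi}(W),
\end{equation*}
which is exactly the claimed leading-order formula for $U(\eps,W)$; substituting into the definition of $R$ gives $R(\eps,W)=\eps P_1^{\Phi}(W)\circ K_1^{\Phi}(W)+o(\eps)$. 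The $C^2$ regularity of $U$ and $R$ in $(\eps,W)$ delivers the bounds on $\|U\|+\|D_W U\|$ and $\|R\|+\|D_W R\|$. Uniqueness is the uniqueness clause of the IFT: if $(\eps,W,U,R)$ satisfies the two equations $X_{g_0,\eps}^{\Phi}(W+U)=R$ and $P_1^{\Phi}(W)(U)=0$, applying $P_2^{\Phi}(W)$ yields $G(\eps,W,U)=0$, and $U\in\mathcal{U}_2$ by the first constraint, so $U=U(\eps,W)$ and then $R=R(\eps,W)$ by definition.

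The only mildly technical point, which I expect to be the main obstacle, is keeping the IFT uniform in the parameter $W\in\mathcal{U}_1$: the isomorphism $A_W$, the projections $P_i^{\Phi}(W)$, and the linearization $D_{g_0}X_{g_0,0}^{\Phi}\eval_W$ all depend on $W$, and one must verify that $D_U G\eval_{(\eps,W,0)}$ remains a uniform isomorphism on a full neighborhood. This follows from the smoothness of $W\mapsto A_W$ and $W\mapsto P_i^{\Phi}(W)$ together with the uniform bounds \eqref{eq:def_trans_1_phi}--\eqref{eq:def_trans_3_phi} and the openness of the set of invertible Rothe maps; the argument is identical to \cite[Lem. 5.2]{arXiv:0808.4038}, as announced.
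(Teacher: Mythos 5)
Your Lyapunov--Schmidt framework is the right one, but the step that is supposed to deliver the constraint $P_1^\Phi(W)\circ U(\eps,W)=0$ contains an error. You apply the implicit function theorem to $G(\eps,W,U)=P_2^\Phi(W)\circ X_{g_0,\eps}^\Phi(W+U)$ with $U$ ranging over the \emph{fixed} set $\mathcal{U}_2\subset R(D_{g_0}X_{g_0,0}^\Phi\eval_0)$, and then assert that ``$\mathcal{U}_2$ lies in the range of $D_{g_0}X_{g_0,0}^\Phi\eval_W$ for $W$ small, by continuity of the decomposition.'' That assertion is false. The summand $R(D_{g_0}X_{g_0,0}^\Phi\eval_W)$ in \eqref{eq:decomp_phi_w} is a $W$-dependent codimension-two subspace of $\langle\tilde{W}_1(\tilde{\alpha}_0)\rangle^\perp$. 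Continuity of the decomposition means the projections $P_i^\Phi(W)$ vary continuously in $W$, not that the range at $W$ contains the range at $0$; two nearby codimension-two subspaces of the same ambient space contain each other only if they coincide. So for $W\neq 0$ the $U(\eps,W)\in\mathcal{U}_2$ produced by your IFT typically has a nonzero $P_1^\Phi(W)$-component, and the second of the five asserted identities fails. The uniqueness argument inherits the same gap, since your IFT gives uniqueness within $\mathcal{U}_2$, not within the affine constraint set $\{U : P_1^\Phi(W)U=0\}$.

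The repair is to build the constraint into the ansatz rather than hope it is automatic: parametrize $U=P_2^\Phi(W)V$ with $V\in\mathcal{U}_2$ and run the IFT on $\tilde G(\eps,W,V):=P_2^\Phi(W)\circ X_{g_0,\eps}^\Phi\big(W+P_2^\Phi(W)V\big)$. Since $P_2^\Phi(0)$ restricts to the identity on $R(D_{g_0}X_{g_0,0}^\Phi\eval_0)$, the partial derivative $D_V\tilde G\eval_{(0,0,0)}$ is still the restriction of $D_{g_0}X_{g_0,0}^\Phi\eval_0$ to its range, which is invertible by \eqref{eq:6}; and now $P_1^\Phi(W)U(\eps,W)=P_1^\Phi(W)P_2^\Phi(W)V(\eps,W)=0$ holds identically, because $P_1^\Phi(W)P_2^\Phi(W)=0$. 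Equivalently, treat $\big(P_2^\Phi(W)\circ X_{g_0,\eps}^\Phi(W+U),\,P_1^\Phi(W)U\big)=0$ as a system for $U$ in a full neighborhood of $0$ in $\langle\tilde{W}_1(\tilde{\alpha}_0)\rangle^\perp$. After this correction the definition of $R$, the $\eps$-expansions obtained by differentiating $\tilde G\equiv 0$, and the uniqueness argument all go through as you describe. Note also that the paper does not prove Lemma~\ref{l:implicit_function} itself but refers to \cite[Lem.~5.2--5.5]{arXiv:0808.4038}, so the benchmark is that reference, and your proposal once fixed reproduces the expected reduction.
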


\begin{lemma}
\label{l:expansion_eps_0}
Under the assumptions of Lemma \ref{l:implicit_function} we have
as $\eps \to 0$
\begin{align*}
X_{g_0,\eps}^{\Phi}(W+U(\eps,W))
&= \eps P_1^\Phi(W)\circ K_1^{\Phi}(W) + O(\eps^{2})_{\eps \to 0},  
\end{align*}
where $K_1^{\Phi}$ is the vector-field $K_1$ in the coordinates $\Phi$, i.e.
\begin{align*}
K_1^{\Phi} = X_{g_0,1}^\Phi-X_{g_0,0}^\Phi.  
\end{align*}
\end{lemma}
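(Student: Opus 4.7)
The plan is to combine the affine dependence of $X^\Phi_{g_0,\eps}$ on $\eps$ with a first-order Taylor expansion in the second slot, and then to exploit that the image of $P_1^\Phi(W)$ is algebraically complementary to the range of $D_{g_0}X^\Phi_{g_0,0}\eval_W$.

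First I would observe that $X_{g_0,\eps}=X_{k_0,g_0}+\eps K_1$ is affine in $\eps$ and that by construction $\Psi^{-1}(\Phi(V),\cdot)$ is linear on each fiber, so this affine dependence carries over to $\Phi$-coordinates and yields
\[
X^\Phi_{g_0,\eps}(V)=X^\Phi_{g_0,0}(V)+\eps\,K_1^\Phi(V),
\]
which is precisely the identity $K_1^\Phi=X^\Phi_{g_0,1}-X^\Phi_{g_0,0}$ recorded in the statement. Next, for $W\in\mathcal{U}_1\subset\langle\tilde{W}_2(\tilde{\alpha}_0),\tilde{W}_3(\tilde{\alpha}_0)\rangle$ the chart satisfies $\Phi(W)=\phi(W)\in\mathcal{Z}_M$, so $X_{k_0,g_0}(\Phi(W))=0$ and hence $X^\Phi_{g_0,0}(W)=0$.

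Combining these two observations with the bound $\|U(\eps,W)\|=O(\eps)$ from Lemma \ref{l:implicit_function} and a Taylor expansion of the smooth maps $X^\Phi_{g_0,0}$ and $K_1^\Phi$ around $W$, I would arrive at
\begin{align*}
X^\Phi_{g_0,\eps}(W+U(\eps,W)) = D_{g_0}X^\Phi_{g_0,0}\eval_W\big(U(\eps,W)\big) + \eps K_1^\Phi(W) + O(\eps^2).
\end{align*}
To eliminate the linear-in-$U$ term I apply $P_1^\Phi(W)$. By the decomposition \eqref{eq:decomp_phi_w}, the range of $D_{g_0}X^\Phi_{g_0,0}\eval_W$ is a complement of the image $\langle\tilde{W}_2(\tilde{\alpha}_0),\tilde{W}_3(\tilde{\alpha}_0)\rangle$ of $P_1^\Phi(W)$, and consequently $P_1^\Phi(W)\circ D_{g_0}X^\Phi_{g_0,0}\eval_W\equiv 0$. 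On the other hand the left-hand side equals $R(\eps,W)$, which already lies in that image by Lemma \ref{l:implicit_function}, so $P_1^\Phi(W)$ acts on it as the identity, and the desired expansion
\[
X^\Phi_{g_0,\eps}(W+U(\eps,W)) = \eps\,P_1^\Phi(W)\circ K_1^\Phi(W) + O(\eps^2)
\]
drops out.

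I do not expect a substantial obstacle; the argument is essentially a second-order bookkeeping refinement of Lemma \ref{l:implicit_function} and needs only two inputs from there, namely $\|U\|=O(\eps)$ and the containment of $R(\eps,W)$ in $\langle\tilde{W}_2(\tilde{\alpha}_0),\tilde{W}_3(\tilde{\alpha}_0)\rangle$. The only real subtlety to track is that while the kernel of $P_1^\Phi(W)$ moves with $W$ along $R(D_{g_0}X^\Phi_{g_0,0}\eval_W)$, its image is the fixed two-dimensional subspace $\langle\tilde{W}_2(\tilde{\alpha}_0),\tilde{W}_3(\tilde{\alpha}_0)\rangle$; this rigidity is what makes $P_1^\Phi(W)\circ D_{g_0}X^\Phi_{g_0,0}\eval_W=0$ hold uniformly in $W$ on $\mathcal{U}_1$, and thereby promotes the $o(\eps)$ remainder of Lemma \ref{l:implicit_function} to the $O(\eps^2)$ error claimed here.
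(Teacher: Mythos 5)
Your argument is correct. A few remarks on how it sits relative to the paper. The paper does not actually prove Lemma~\ref{l:expansion_eps_0}; it states Lemmas~\ref{l:implicit_function}--\ref{l:nondeg_zero_gamma} without proof, deferring to the corresponding results in \cite{arXiv:0808.4038}. So there is no in-text proof to compare against, but your reasoning is the natural Lyapunov--Schmidt bookkeeping that the setup of Section~\ref{sec:perturbed} is designed to support, and it is the argument that the cited source carries out.

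Your three inputs are exactly the right ones: (i) the affine dependence $X_{g_0,\eps}=X_{k_0,g_0}+\eps K_1$ survives conjugation by the fiberwise-linear chart $\Psi$, giving $X^\Phi_{g_0,\eps}=X^\Phi_{g_0,0}+\eps K_1^\Phi$; (ii) $X^\Phi_{g_0,0}(W)=0$ for $W\in\mathcal{U}_1$, since $\Phi(W,0)=\phi(W)\in\mathcal{Z}_M$; (iii) by Lemma~\ref{l:implicit_function} one has $\|U(\eps,W)\|=O(\eps)$, and $R(\eps,W)=X^\Phi_{g_0,\eps}(W+U(\eps,W))$ takes values in $\langle\tilde{W}_2(\tilde{\alpha}_0),\tilde{W}_3(\tilde{\alpha}_0)\rangle$, which is the range of $P_1^\Phi(W)$ for every $W$. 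With these, the $C^2$-Taylor expansion to second order plus the annihilation $P_1^\Phi(W)\circ D_{g_0}X_{g_0,0}^\Phi\eval_W=0$ (which is what \eqref{eq:decomp_phi_w} buys) produces the $O(\eps^2)$ improvement over the $o(\eps)$ expansion already recorded in Lemma~\ref{l:implicit_function}. The only point that deserves an explicit word is uniformity: the implicit constants in the Taylor remainder and the operator norms $\|P_1^\Phi(W)\|$ and $\|A_W^{\pm1}\|$ must be bounded uniformly over $W\in\mathcal{U}_1$; the paper has arranged this (after shrinking $\mathcal{U}$) in \eqref{eq:def_trans_1_phi}--\eqref{eq:def_trans_3_phi}, and you implicitly rely on it, so it is worth saying so.
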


\begin{lemma}
\label{l:nondeg_zero}
Under the assumptions of Lemma \ref{l:implicit_function}
suppose $0$ is a nondegenerate zero of the vector-field $P_1^\Phi(\cdot)\circ K_1^\Phi(\cdot)$,
in the sense that $P_1^\Phi(0)\circ K_1^\Phi(0)=0$
and
\begin{align*}
D_W(P_1^\Phi(\cdot)\circ K_1^\Phi(\cdot))\eval_0 
\in \mathcal{L}(\langle {W}_2(\tilde{\alpha}_0),{W}_3(\tilde{\alpha}_0)\rangle)  
\end{align*}
is an isomorphism. Then, after possibly shrinking $\eps_0$ and $\mathcal{U}$,
for any $0<\eps\le \eps_0 $ there is a unique $W(\eps)\in \mathcal{U}_1$
such that
\begin{align*}
X_{g_0,\eps}^{\Phi}(W(\eps)+U(\eps,W(\eps)))=0,\\
W(\eps) \to 0 \text{ as } \eps \to 0.  
\end{align*}
Moreover, $V(\eps):= W(\eps)+U(\eps,W(\eps))$ is the only zero of
$X_{g_0,\eps}^{\Phi}$ in $\mathcal{U}\cap \langle \tilde{W}_1(\tilde{\alpha}_0)\rangle^{\perp}$
and is nondegenerate with
\begin{align*}
\sgn(D X_{g_0,\eps}^{\Phi}\eval_{V(\eps)}) 
&= -det(D_W(P_1^\Phi(\cdot)\circ K_1^\Phi(\cdot))\eval_0).  
\end{align*}
\end{lemma}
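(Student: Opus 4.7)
The plan is to run a Lyapunov-Schmidt argument whose finite-dimensional bifurcation equation has already been isolated by Lemmas \ref{l:implicit_function} and \ref{l:expansion_eps_0}. We will find $W(\eps)$ via the implicit function theorem applied to $R(\eps, W) = 0$, establish uniqueness in the full neighborhood by invoking the uniqueness clause of Lemma \ref{l:implicit_function}, and compute the sign of the linearization by a block-matrix / Schur-complement reduction.

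First we produce $W(\eps)$. Since $R$ is $C^2$ and $\|R(\eps, W)\| = O(\eps)$ by Lemma \ref{l:implicit_function}, we have $R(0, W) \equiv 0$, so we may factor $R(\eps, W) = \eps G(\eps, W)$ with $G$ of class $C^1$ on $[-\eps_0, \eps_0] \times \mathcal{U}_1$; Lemma \ref{l:expansion_eps_0} identifies $G(0, W) = P_1^\Phi(W) \circ K_1^\Phi(W)$. Under the standing hypothesis, $G(0, 0) = 0$ and $D_W G(0, 0) = D_W(P_1^\Phi \circ K_1^\Phi)\eval_0$ is an isomorphism of the two-dimensional space $\langle \tilde{W}_2(\tilde{\alpha}_0), \tilde{W}_3(\tilde{\alpha}_0)\rangle$. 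The implicit function theorem then provides, after possibly shrinking $\eps_0$, a unique $C^1$ branch $\eps \mapsto W(\eps) \in \mathcal{U}_1$ with $W(\eps) \to 0$ and $G(\eps, W(\eps)) = 0$, so that $V(\eps) := W(\eps) + U(\eps, W(\eps))$ satisfies $X_{g_0, \eps}^\Phi(V(\eps)) = 0$.

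For the uniqueness of $V(\eps)$ in the whole neighborhood, we would argue that after further shrinking $\mathcal{U}$ every element $V \in \mathcal{U} \cap \langle \tilde{W}_1(\tilde{\alpha}_0)\rangle^\perp$ admits a unique decomposition $V = W + U$ with $W \in \mathcal{U}_1$ and $P_1^\Phi(W) \circ U = 0$, since this $W$-dependent splitting is a $C^1$-small perturbation of the fixed splitting at $W = 0$ and the inverse function theorem applies. A zero $V$ of $X_{g_0, \eps}^\Phi$ then produces a tuple $(\eps, W, U, 0)$ meeting the uniqueness hypothesis of Lemma \ref{l:implicit_function}, forcing $U = U(\eps, W)$ and $R(\eps, W) = 0$, and hence $W = W(\eps)$ by the step above.

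The main obstacle will be the nondegeneracy and sign computation. With respect to the splitting
\begin{align*}
\langle \tilde{W}_1(\tilde{\alpha}_0)\rangle^\perp = \langle \tilde{W}_2(\tilde{\alpha}_0), \tilde{W}_3(\tilde{\alpha}_0)\rangle \oplus R\big(D_{g_0} X_{k_0, g_0}\eval_{\tilde{\alpha}_0}\big),
\end{align*}
we expand $D_{g_0} X_{g_0, \eps}^\Phi\eval_{V(\eps)}$ as a $2 \times 2$ block operator. Differentiating the bifurcation equation $R(\eps, W(\eps)) = 0$ and applying Lemma \ref{l:expansion_eps_0} yields diagonal blocks $\eps D_W(P_1^\Phi \circ K_1^\Phi)\eval_0 + O(\eps^2)$ on the kernel direction and $(D_{g_0} X_{g_0, 0}^\Phi\eval_0)\eval_R + O(\eps)$ on the range direction, with off-diagonal blocks of order $O(\eps)$; by \eqref{eq:6} the bottom-right block is a Rothe isomorphism of sign $-1$. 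The delicate point is to conjugate by a Schur-complement triangular operator close to the identity, verify that the interpolating family stays within invertible Rothe maps of the form $\text{identity} + \text{compact}$, and conclude that the sign of the full operator equals the product of the two diagonal signs. Since the top-left block acts on a two-dimensional space, its sign equals $\sgn \det(\eps D_W(P_1^\Phi \circ K_1^\Phi)\eval_0) = \sgn \det(D_W(P_1^\Phi \circ K_1^\Phi)\eval_0)$ for small $\eps > 0$, and multiplying by the $-1$ from the bottom-right block yields the stated identity.
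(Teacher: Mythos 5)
The paper itself states this lemma without proof, deferring to \cite[Lem.\ 5.2--5.5]{arXiv:0808.4038}, so there is no in-paper argument to compare against; your Lyapunov--Schmidt treatment is the natural route and is correct in structure: factor $\eps$ out of the bifurcation map, apply the implicit function theorem to the reduced map $G$, invoke the uniqueness clause of Lemma~\ref{l:implicit_function} to pin down the zero, and obtain the sign by a Schur-complement reduction combined with \eqref{eq:6}.

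Two places deserve sharpening. For the factorization $R(\eps,W)=\eps G(\eps,W)$, the identity $R(0,W)\equiv 0$ (which licenses the integral-remainder factorization) is better derived structurally than from the $O(\eps)$ estimate: $\Phi$ maps $\mathcal{U}_1\subset\langle\tilde{W}_2(\tilde{\alpha}_0),\tilde{W}_3(\tilde{\alpha}_0)\rangle$ into the solution manifold $\mathcal{Z}_M$ and $U(0,W)=0$, so $X_{g_0,0}^\Phi(W+U(0,W))=X_{g_0,0}^\Phi(W)=0$ directly; the $O(\eps)$ bound is then a consequence. For the sign, what differentiating the bifurcation equation gives you is the Schur complement $A-BD^{-1}C = D_W R(\eps,\cdot)\eval_{W(\eps)}$, not the raw top-left block $A$ of $DX_{g_0,\eps}^\Phi\eval_{V(\eps)}$; since $B,C=O(\eps)$ and $D^{-1}=O(1)$, both equal $\eps\, D_W(P_1^\Phi\circ K_1^\Phi)\eval_0+O(\eps^2)$ so your conclusion is unaffected, but the logic should run through the identity $\sgn D X_{g_0,\eps}^\Phi\eval_{V(\eps)}=\sgn\det\big(D_W R(\eps,\cdot)\eval_{W(\eps)}\big)\cdot\sgn D$. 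The Rothe bookkeeping you flag as the delicate step is manageable: $BD^{-1}$ and $D^{-1}C$ are finite rank, so the triangular conjugating factors are Rothe maps of sign $+1$ (homotope the off-diagonal entry linearly to zero through invertibles), and $\sgn D=-1$ for small $\eps>0$ follows from \eqref{eq:6} together with openness of the sign for Rothe isomorphisms. With those lines filled in, the proof is complete and matches what the cited reference must be doing.
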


\begin{lemma}
\label{l:nondeg_zero_gamma}
Under the assumptions of Lemma \ref{l:implicit_function}
suppose $\tilde{\alpha}_0$ is a nondegenerate zero of the vector-field $P_1(\cdot)\circ K_1(\cdot)$ on $\mathcal{Z}_{M}$,
in the sense that $P_1(\tilde{\alpha}_0)\circ K_1(\tilde{\alpha}_0)=0$
and
\begin{align*}
D_{\mathcal{Z}}(P_1(\cdot)\circ K_1(\cdot))\eval_{\tilde{\alpha}_0} 
\in \mathcal{L}(\langle {W}_2(\tilde{\alpha}_0),{W}_3(\tilde{\alpha}_0)\rangle) 
\end{align*}
is an isomorphism. Then 
for any $0<\eps<\eps_0$ there is $\tilde{\gamma}(\eps) \in \Phi(\mathcal{U})$ satisfying
\begin{align*}
X_{g_0,\eps}(\tilde{\gamma}(\eps))=0 \text{ and }
\tilde{\gamma}(\eps) \to \tilde{\alpha}_0 \text{ as } \eps \to 0.  
\end{align*}
Moreover, $S^1*\tilde{\gamma}(\eps)$ is the unique critical orbit of $X_{g_0,\eps}$ in $\Phi(\mathcal{U})$
and is nondegenerate with
\begin{align*}
\deg_{loc, S^{1}}(X_{g_0,\eps},S^1*\tilde{\gamma}(\eps)) &= -\det(D_{\mathcal{Z}_{M}}(P_1(\cdot)\circ K_1(\cdot))\eval_{\tilde{\alpha}_0}).  
\end{align*}
\end{lemma}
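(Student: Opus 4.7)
The plan is to transfer Lemma~\ref{l:nondeg_zero} from the chart $\Phi$ to $\mathcal{Z}_M$ via the equivalence \eqref{eq:nondegenerate_phi}, which relates zeros of $X_{g_0,\eps}^\Phi$ to critical orbits of $X_{g_0,\eps}$. First, I identify the hypotheses on the two sides. The restriction of $\Phi$ to $\mathcal{U}\cap \langle \tilde{W}_1(\tilde{\alpha}_0),\tilde{W}_2(\tilde{\alpha}_0),\tilde{W}_3(\tilde{\alpha}_0)\rangle$ equals the parametrization $\phi$ of $\mathcal{Z}_M$ and satisfies $\Phi(0)=\tilde{\alpha}_0$ with $D\Phi\eval_0=id$. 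Consequently $A_0$ is the identity on $\langle \tilde{W}_1(\tilde{\alpha}_0)\rangle^\perp$, so $P_i^\Phi(0)=P_i(\tilde{\alpha}_0)$ and $K_1^\Phi(0)=K_1(\tilde{\alpha}_0)$. Since $D\Phi\eval_0=id$, differentiating $P_1^\Phi\circ K_1^\Phi$ along $\langle \tilde{W}_2(\tilde{\alpha}_0),\tilde{W}_3(\tilde{\alpha}_0)\rangle$ produces the same endomorphism as differentiating $P_1\circ K_1$ along the corresponding subspace of $T_{\tilde{\alpha}_0}\mathcal{Z}_M$. Hence $\tilde{\alpha}_0$ is a nondegenerate zero of $P_1\circ K_1$ on $\mathcal{Z}_M$ if and only if $0$ is a nondegenerate zero of $P_1^\Phi\circ K_1^\Phi$ on $\mathcal{U}_1$, and the two determinants coincide.

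Next I invoke Lemma~\ref{l:nondeg_zero}: after possibly shrinking $\eps_0$ and $\mathcal{U}$, for each $0<\eps<\eps_0$ there is a unique nondegenerate zero $V(\eps)=W(\eps)+U(\eps,W(\eps))$ of $X_{g_0,\eps}^\Phi$ in $\mathcal{U}\cap \langle \tilde{W}_1(\tilde{\alpha}_0)\rangle^\perp$, with $V(\eps)\to 0$ as $\eps\to 0^+$. Setting $\tilde{\gamma}(\eps):=\Phi(V(\eps))\in \Phi(\mathcal{U})$ and applying \eqref{eq:nondegenerate_phi}, I obtain that $S^1*\tilde{\gamma}(\eps)$ is a nondegenerate critical orbit of $X_{g_0,\eps}$ in $\Phi(\mathcal{U})$; continuity of $\Phi$ yields $\tilde{\gamma}(\eps)\to \tilde{\alpha}_0$. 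Uniqueness of the critical orbit within $\Phi(\mathcal{U})$ follows from the uniqueness statement of Lemma~\ref{l:nondeg_zero} together with \eqref{eq:nondegenerate_phi}.

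For the degree formula, I use the identity \eqref{eq:d_x_phi},
\begin{align*}
D_{g_0}X_{g_0,\eps}^\Phi\eval_{V(\eps)} = A_{V(\eps)}^{-1}\circ D_{g_0}X_{g_0,\eps}\eval_{\tilde{\gamma}(\eps)}\circ D\Phi\eval_{V(\eps)}.
\end{align*}
By properties \eqref{eq:def_trans_2_phi}--\eqref{eq:def_trans_3_phi}, the map $A_{V(\eps)}$ and the restriction of $D\Phi\eval_{V(\eps)}$ relevant here are isomorphisms of the form $identity+compact$ that deform continuously to the identity through such maps, hence each has Rothe sign $+1$. Therefore the sign of $D_{g_0}X_{g_0,\eps}^\Phi\eval_{V(\eps)}$ equals that of $D_{g_0}X_{g_0,\eps}\eval_{\tilde{\gamma}(\eps)}$ on $\langle W_{g_0}(\tilde{\gamma}(\eps))\rangle^\perp$, which by definition is $\deg_{loc,S^1}(X_{g_0,\eps},S^1*\tilde{\gamma}(\eps))$. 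Combining with the sign formula of Lemma~\ref{l:nondeg_zero} and the identification of determinants from the first step yields
\begin{align*}
\deg_{loc,S^1}(X_{g_0,\eps},S^1*\tilde{\gamma}(\eps)) = -\det\big(D_{\mathcal{Z}_M}(P_1(\cdot)\circ K_1(\cdot))\eval_{\tilde{\alpha}_0}\big).
\end{align*}

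The main obstacle is the bookkeeping of the identification step: checking that $D\Phi\eval_0=id$ combined with $A_0=id$ on $\langle \tilde{W}_1(\tilde{\alpha}_0)\rangle^\perp$ actually forces the pulled-back linearization to agree on the nose with $D_{\mathcal{Z}_M}(P_1\circ K_1)\eval_{\tilde{\alpha}_0}$, so that the determinants (and not merely their signs) match. Once that is in place, everything else is a routine consequence of Lemma~\ref{l:nondeg_zero} and the invariance of Rothe signs under conjugation by isomorphisms homotopic to the identity within the class of $identity+compact$ maps.
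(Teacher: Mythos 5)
The paper itself does not prove Lemma~\ref{l:nondeg_zero_gamma}; it cites \cite{arXiv:0808.4038} for all four Lemmas~\ref{l:implicit_function}--\ref{l:nondeg_zero_gamma}. Your chart-transfer argument from Lemma~\ref{l:nondeg_zero} to Lemma~\ref{l:nondeg_zero_gamma} is the natural (and presumably intended) route, and it is correct in its essentials.

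The ``main obstacle'' you flag at the end can be closed cleanly, and it is worth spelling out. Writing $Y(\tilde\alpha):=P_1(\tilde\alpha)\circ K_1(\tilde\alpha)$ and $Y^\Phi(W):=P_1^\Phi(W)\circ K_1^\Phi(W)$, note that $X_{g_0,\eps}^\Phi(W)=A_W^{-1}X_{g_0,\eps}(\Phi(W))$ (using that $X_{g_0,\eps}\perp W_{g_0}$), hence $K_1^\Phi(W)=A_W^{-1}K_1(\Phi(W))$ (using $K_1\perp W_{g_0}$), and so
\begin{align*}
Y^\Phi(W)=A_W^{-1}P_1(\Phi(W))A_W\, A_W^{-1}K_1(\Phi(W))=A_W^{-1}\,Y(\Phi(W)).
\end{align*}
Differentiating at $W=0$ in a direction $\xi\in\langle\tilde W_2(\tilde\alpha_0),\tilde W_3(\tilde\alpha_0)\rangle$ and using $Y(\tilde\alpha_0)=0$, $A_0=\mathrm{id}$, $D\Phi\eval_0=\mathrm{id}$, the product-rule term involving $D_W(A_W^{-1})$ dies because it hits $Y(\tilde\alpha_0)=0$, leaving
$D_WY^\Phi\eval_0\,\xi = D_{\mathcal Z_M}Y\eval_{\tilde\alpha_0}\,\xi$.
(The vanishing of $Y$ at $\tilde\alpha_0$ is also what lets you identify the ordinary derivative with the covariant one.) This gives literal equality of the two linearizations, not just equality of their signs, so the determinant formula of Lemma~\ref{l:nondeg_zero} transports verbatim.

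Two minor imprecisions in the degree step: $A_{V(\eps)}$ and $D\Phi\eval_{V(\eps)}$ need not be of the form $identity+compact$; what you actually need (and what \eqref{eq:def_trans_3_phi} provides, together with the $C^2$-dependence of $\Phi$ on $V$ and $A_0=D\Phi\eval_0=\mathrm{id}$) is that for small $\eps$ they lie in the path-component of the identity in the group of isomorphisms, so that conjugation in \eqref{eq:d_x_phi} leaves the Rothe sign unchanged. Also, strictly speaking \eqref{eq:d_x_phi} involves $D\Phi\eval_V$ rather than $A_V$ on the right, so one should note that $D_{g_0}X_{g_0,\eps}\eval_{\Phi(V)}$ annihilates the $\langle W_{g_0}(\Phi(V))\rangle$-component before reading the right side as a conjugation by $A_V$; alternatively, one can simply invoke the fact (established in \cite[Lem.~3.5]{arXiv:0808.4038}, of which \eqref{eq:nondegenerate_phi} is the statement) that local $S^1$-degrees are invariant under the chart $\Phi$. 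With these points tightened, your proof is complete.
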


\section{The computation of the degree}
\label{sec:computation_degree}
In order to compute the $S^{1}$-degree of $X_{g_0,\eps}$ we choose a smooth Morse function 
$k_1 \in C^{\infty}(M,\rz)$.
The corresponding vector-field $K_1$ on $H^{2,2}(S^{1},M)$ is given by
\begin{align*}
K_1(\gamma) = (-D_{t,g_0}^{2}+1)^{-1}(|\dot \gamma|_{g_0} k_1(\tilde{\gamma}) J_{g_0}(\gamma)\dot\gamma).  
\end{align*}
We note that for $\tilde{\alpha} = \pi_M \circ \alpha(\cdot,2\pi|r|,v_0,v_1,w) \in \mathcal{Z}_{M}$ 
and $r>0$ small enough we have
\begin{align*}
K_1(\tilde{\alpha}) = 2\pi r
(-D_{t,g_0}^{2}+1)^{-1}
\big(
k_1(\tilde{\alpha}) (\phi_M)_*(\alpha \times_m \dot\alpha)
\big).
\end{align*}
Consequently, from \eqref{eq:D2_1_w2}, \eqref{eq:D2_1_w3}, and \eqref{eq:4}
\begin{align}
\label{eq:11}
P_1(\tilde{\alpha})\circ K_1(\tilde{\alpha}) = 
\sigma_2(\tilde{\alpha}) \tilde{W}_2(\tilde{\alpha}) 
+ \sigma_3(\tilde{\alpha}) \tilde{W}_3(\tilde{\alpha}),   
\end{align}
where $\sigma_2(\tilde{\alpha}),\, \sigma_2(\tilde{\alpha}) \in \rz$ are defined by the condition that
\begin{align*}
2\pi r k_1(\tilde{\alpha}) 
-\frac{\sigma_2(\tilde{\alpha})}{2\pi r}   
(1-4\pi^{2}r^{2})\cos(2\pi \cdot)
-\frac{\sigma_3(\tilde{\alpha})}{2\pi r}   
(1-4\pi^{2}r^{2})\sin(2\pi \cdot) 
\end{align*}
is $L^{2}$-orthogonal to $\langle \cos(2\pi \cdot), \sin(2\pi \cdot)\rangle$.
Hence,
\begin{align*}
\sigma_2(\tilde{\alpha}) &= \frac{8\pi^{2} r^{2}}{1-4\pi^{2}r^{2}}
\int^{1}_{0} k_1\circ \tilde{\alpha}(t) \cos(2\pi t) \, dt,\\
\sigma_3(\tilde{\alpha}) &= \frac{8\pi^{2} r^{2}}{1-4\pi^{2}r^{2}}
\int^{1}_{0} k_1\circ \tilde{\alpha}(t) \sin(2\pi t) \, dt.
\end{align*}
In the following we are interested in the asympotics of $\sigma_2$ and
$\sigma_3$ as $r \to 0^+$ or equivalently as $k_0 \to \infty$.
There holds
\begin{align}
\label{eq:7}
&\frac{1-4\pi^{2}r^{2}}{8\pi^{2} r^{2}}\sigma_2(\tilde{\alpha}) \notag \\
&\quad = \int^{1}_{0} 
\Big( k_1\circ \pi_M(w) 
+ r dk_1\eval_{\pi_M(w)} \cos(2\pi t)(\pi_M)_* v_1 \notag \\
&\qquad +r dk_1\eval_{\pi_M(w)}\sin(2\pi t)(\pi_M)_* v_0 + O(r^2)\Big)
\cos(2\pi t) \, dt \notag\\
&\quad = \frac{1}{2} r dk_1\eval_{\pi_M(w)} (\pi_M)_* v_1 + O(r^2), 
\end{align}
and analogously we find
\begin{align}
\label{eq:9}
\frac{1-4\pi^{2}r^{2}}{8\pi^{2} r^{2}}\sigma_3(\tilde{\alpha})
&=  \frac{1}{2} r dk_1\eval_{\pi_M(w)} (\pi_M)_* v_0 + O(r^2). 
\end{align}
From the above expansion we easily deduce
\begin{lemma}
\label{l:k_1_1}
For all $\delta>0$ there is $r_0>0$ such that for all $0<r\le r_0$ and
\begin{align*}
\tilde{\alpha}= \pi_M(\sqrt{1+r^2}w+r\cos(2\pi t)v_1+r\sin(2\pi t)v_0) \in \mathcal{Z}_{M}  
\end{align*}
satisfying $P_1(\tilde{\alpha})\circ K_1(\tilde{\alpha}) =0$ there holds
\begin{align*}
\pi_M(w) \in \cupl_{i=1}^n B_{\delta}(\tilde{w}_i),  
\end{align*}
where $\{\tilde{w}_i\where 1\le i \le n\}$ denotes the set of critical points of $k_1$
in $M$.
\end{lemma}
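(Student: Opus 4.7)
The plan is to combine the expansions \eqref{eq:7} and \eqref{eq:9} with the fact that the right-hand side of \eqref{eq:11} vanishes if and only if both scalar coefficients $\sigma_2(\tilde{\alpha}), \sigma_3(\tilde{\alpha})$ vanish; this will transfer the condition $P_1(\tilde{\alpha})\circ K_1(\tilde{\alpha}) = 0$ to an approximate criticality condition on $k_1$ at $\pi_M(w)$, from which the claim follows by compactness.

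More concretely, I would first observe that $\{\tilde{W}_2(\tilde{\alpha}), \tilde{W}_3(\tilde{\alpha})\}$ is a basis of $\langle \tilde{W}_2(\tilde{\alpha}), \tilde{W}_3(\tilde{\alpha})\rangle$ (being the image of the linearly independent set $\{W_2, W_3\}$ under the local isometry $(\pi_M)_*$, cf.\ the computations of $W_1, W_2, W_3$ in \eqref{eq:12}), so the hypothesis forces $\sigma_2(\tilde{\alpha}) = \sigma_3(\tilde{\alpha}) = 0$. Dividing by the prefactor $(1-4\pi^2r^2)/(8\pi^2 r^2)$ (which is bounded away from zero for $r$ small) and then by $r$, the expansions \eqref{eq:7} and \eqref{eq:9} yield
\begin{align*}
dk_1\eval_{\pi_M(w)}(\pi_M)_* v_1 &= O(r), \\
dk_1\eval_{\pi_M(w)}(\pi_M)_* v_0 &= O(r),
\end{align*}
where the constants in $O(r)$ can be taken independent of $w$, $v_0$, $v_1$ because the positive orthonormal systems $(v_0, v_1, w)$ with $w \in \hz$ lie, modulo the $SO(2,1)_+$ action on $\hz$ (which descends to a compact family of local frames on $M$), in a compact set, $M$ itself is compact, and all derivatives of the smooth function $k_1$ are bounded.

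Since $\{v_0, v_1, w\}$ is a positive orthonormal system in $(\rz^3, g_m)$, the pair $\{v_0, v_1\}$ is an orthonormal basis of $T_w \hz$, and because $\pi_M$ is a local isometry, $\{(\pi_M)_*v_0, (\pi_M)_*v_1\}$ is an orthonormal basis of $T_{\pi_M(w)} M$. Hence the two displayed equations control the full differential, giving $\|dk_1\eval_{\pi_M(w)}\| \le C r$ for some $C$ independent of the data.

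Finally, because $k_1 \in C^\infty(M, \rz)$ is a Morse function on the compact manifold $M$, its critical points $\{\tilde{w}_1, \ldots, \tilde{w}_n\}$ are finite, and for any $\delta > 0$ the quantity
\[
\eta(\delta) := \inf\bigl\{\|dk_1\eval_x\| : x \in M\setminus \cupl_{i=1}^n B_\delta(\tilde{w}_i)\bigr\}
\]
is strictly positive. Choosing $r_0 > 0$ with $C r_0 < \eta(\delta)$ then forces $\pi_M(w) \in \cupl_{i=1}^n B_\delta(\tilde{w}_i)$, as required. The only slightly delicate point, and in my view the main one to handle carefully, is the uniformity of the $O(r)$ remainder in the positive orthonormal systems $\{v_0, v_1, w\}$; this is where invoking the compactness of $M$ (so that $w$ effectively varies in a fundamental domain) together with the $SO(2,1)_+$-symmetry of the construction is essential.
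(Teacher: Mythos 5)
Your proof is correct and takes essentially the same route as the paper, which merely states that the lemma follows "easily" from the expansions \eqref{eq:7} and \eqref{eq:9}; you correctly fill in the details: linear independence of $\tilde{W}_2,\tilde{W}_3$ forces $\sigma_2=\sigma_3=0$, orthonormality of $\{(\pi_M)_*v_0,(\pi_M)_*v_1\}$ in $T_{\pi_M(w)}M$ converts the two scalar bounds into $\|dk_1\|_{\pi_M(w)}=O(r)$, and compactness of $M$ (so the remainder is uniform and $\inf\{\|dk_1\|: x\notin \cup B_\delta(\tilde{w}_i)\}>0$) finishes it.
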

Fix $w_0 \in \hz$ and a positive orthonormal system $\{v_0,v_1,w_0\}$ in $(\rz^3,m)$ such that 
$\pi_M(w_0)$ is a critical point of $k_1$ in $M$. We choose $\delta>0$, a parametrization
\begin{align*}
w:\: B_1(0) \subset \rz^2 \to B_{\delta}(w_0) \subset \hz,\;
(x,y) \mapsto w(x,y),  
\end{align*}
and smooth maps $v_0, v_1:\: B_1(0) \to \rz^3$ such that
$\{v_0(x,y),v_1(x,y),w(x,y)\}$ is orthonormal for all $(x,y) \in B_1(0)$ 
and
\begin{align}
\label{eq:10}
(v_0(0,0),v_1(0,0),w(0,0))=(v_0,v_1,w_0),\,
\frac{\rand w}{\rand x}\eval_{(0,0)}=v_1,\, \frac{\rand w}{\rand y}\eval_{(0,0)}=v_0.
\end{align}
Shrinking $\delta>0$ we may assume that 
$\pi_M \circ \phi_w$ parametrizes $M$ and that
$(x,y) \mapsto \tilde{\alpha}(x,y)$
is an injective immersion from $B_1(0)$ to $\mathcal{Z}_M$, where
\begin{align*}
\tilde{\alpha}(x,y) := 
\pi_M \big(\sqrt{1+r^2} w(x,y)+r \cos(2\pi \cdot) v_1(x,y) +r \sin(2\pi \cdot) v_0(x,y)\big).
\end{align*}
From \eqref{eq:12} and \eqref{eq:10} we get as $r,\delta \to 0^+$
\begin{align}
\label{eq:13}
\frac{\rand}{\rand x} \tilde{\alpha}\eval_{(x,y)}
&= \tilde{W}_2(\tilde{\alpha}(x,y)) + O(r)+O(\delta) \notag\\
\frac{\rand}{\rand y} \tilde{\alpha}\eval_{(x,y)}
&= \tilde{W}_3(\tilde{\alpha}(x,y)) + O(r)+O(\delta).
\end{align}
Define $H:\: B_1(0) \to \rz^2$ by
\begin{align*}
H(x,y) := \big(\sigma_2(\tilde{\alpha}(x,y)),\sigma_3(\tilde{\alpha}(x,y))\big).  
\end{align*}
By \eqref{eq:7} and \eqref{eq:9} we have as $r \to 0^+$
\begin{align*}
\frac{H(x,y)}{8\pi^2r^3} &:=  
\big(dk_1\eval_{\pi_M(w(x,y))} (\pi_M)_* v_1(x,y), 
dk_1\eval_{\pi_M(w(x,y))} (\pi_M)_* v_0(x,y)\big)\\
&\quad +O(r).  
\end{align*}
Since
\begin{align*}
\frac{d}{dx} \pi_M \circ w\eval_{0,0} = (\pi_M)_* v_1(0,0),\;
\frac{d}{dy} \pi_M \circ w\eval_{0,0} = (\pi_M)_* v_0(0,0)  
\end{align*}
we find for small values of $\delta>0$ and $r>0$
\begin{align*}
\deg(H,B_1(0),0)&= \deg(\nabla (k_1\circ \pi_M \circ w),B_1(0),0)\\
&= \deg(\nabla k_1,B_{\delta}(\pi_M(w_0)),0)
= \deg_{loc}(\nabla k_1,\pi_M(w_0)),  
\end{align*}
and the set of zeros of $H$ in $B_1(0)$ is non-empty. Fix a zero $(x_0,y_0) \in B_1(0)$ of $H$.
Then
\begin{align*}
dH\eval_{(x_0,y_0)} =
\begin{pmatrix}
\frac{\rand}{\rand x}(\sigma_2 \circ \tilde{\alpha})\eval_{(x_0,y_0)} &
\frac{\rand}{\rand y}(\sigma_2 \circ \tilde{\alpha})\eval_{(x_0,y_0)}\\  
\frac{\rand}{\rand x}(\sigma_3 \circ \tilde{\alpha})\eval_{(x_0,y_0)} &
\frac{\rand}{\rand y}(\sigma_3 \circ \tilde{\alpha})\eval_{(x_0,y_0)}\\  
\end{pmatrix}.
\end{align*}
From \eqref{eq:7}, \eqref{eq:9}, and the fact that $H(x_0,y_0)=0$ we get 
\begin{align*}
dk_1\eval_{\pi_M(w(x_0,y_0))} = O(r).
\end{align*}
Thus, we have as $r \to 0^+$
\begin{align*}
dk_1&\eval_{\tilde{\alpha}(x_0,y_0)(t)} \frac{\rand}{\rand x}\tilde{\alpha}\eval_{(x_0,y_0)}(t)\\
&= dk_1\eval_{\pi_M(w(x_0,y_0))} (\pi_M)_* \frac{\rand }{\rand x}w\eval_{(x_0,y_0)}\\
&\quad + r \Big(\nabla_{\frac{\rand}{\rand r} \tilde{\alpha}(x_0,y_0)(t)\eval_{r=0}} 
dk_1\eval_{\pi_M(w(x_0,y_0))} (\pi_M)_*\frac{\rand }{\rand x}w\eval_{(x_0,y_0)}\\
&\quad +dk_1\eval_{\pi_M(w(x_0,y_0))} 
\nabla_{\frac{\rand}{\rand r} \tilde{\alpha}(x_0,y_0)(t)\eval_{r=0}}
\frac{\rand}{\rand x}\tilde{\alpha}(x_0,y_0)(t)\eval_{r=0}\Big)
+O(r^2)\\
&= dk_1\eval_{\pi_M(w(x_0,y_0))} (\pi_M)_* \frac{\rand }{\rand x}w\eval_{(x_0,y_0)}\\
&\quad + r (\nabla dk_1)\eval_{\pi_M(w(x_0,y_0))}
\Big((\pi_M)_* \frac{\rand }{\rand x}w\eval_{(x_0,y_0)},\\
&\qquad\cos(2\pi t) (\pi_M)_* v_1(x_0,y_0)+\sin(2\pi t) (\pi_M)_* v_0(x_0,y_0)\Big)
+O(r^2).
\end{align*}
Using \eqref{eq:10}, this leads to, as $r,\delta \to 0^{+}$
\begin{align*}
\frac{\rand}{\rand x}&(\sigma_2 \circ \tilde{\alpha})\eval_{(x_0,y_0)}\\
&= \int_0^1 dk_1\eval_{\tilde{\alpha}(x_0,y_0)(t)} 
\frac{\rand}{\rand x}\tilde{\alpha}\eval_{(x_0,y_0)}(t)
\cos(2\pi t) dt\\
&=  \frac{r}{2} (\nabla dk_1)\eval_{\pi_M(w(x_0,y_0))}
((\pi_M)_* \frac{\rand }{\rand x}w\eval_{(x_0,y_0)}, (\pi_M)_* v_1(x_0,y_0))
+O(r^2)\\
&=  \frac{r}{2} (\nabla dk_1)\eval_{\pi_M(w_0)}
\Big((\pi_M)_* v_1, (\pi_M)_* v_1\Big)
+O(r^2)+O(r\delta).
\end{align*}
Analogously, we may compute the remaining partial derivatives of $H$
and we find for small values of $\delta>0$ and $r>0$
\begin{align}
\label{eq:14}
\sgn \det(dH\eval_{(x_0,y_0)}) =
\sgn \det(\nabla dk_1\eval_{\pi_M(w_0)})=\deg_{loc}(\nabla k_1,\pi_M(w_0)),    
\end{align}
such that $(x_0,y_0)$ is the unique zero of $H$ in $B_1(0)$. 
From \eqref{eq:11} we see that 
\begin{align*}
P_1(\tilde{\alpha}(x_0,y_0)) \circ K_1(\tilde{\alpha}(x_0,y_0)) =0,
\end{align*}
by \eqref{eq:13} we obtain as $r,\delta \to 0^+$
\begin{align*}
&\nabla_{\tilde{W}_2}\big(P_1(\cdot) \circ K_1(\cdot)\big)\eval_{\tilde{\alpha}(x_0,y_0)}\\
&\quad= \Big(d\sigma_2\eval_{\tilde{\alpha}(x_0,y_0)} \tilde{W}_2(\tilde{\alpha}(x_0,y_0))\Big)
\tilde{W}_2(\tilde{\alpha}(x_0,y_0))\\
&\qquad +
\Big(d\sigma_3\eval_{\tilde{\alpha}(x_0,y_0)} \tilde{W}_2(\tilde{\alpha}(x_0,y_0))\Big)
\tilde{W}_3(\tilde{\alpha}(x_0,y_0))\\
&\quad = \frac{\rand}{\rand x}(\sigma_2 \circ \tilde{\alpha})\eval_{(x_0,y_0)}
\tilde{W}_2(\tilde{\alpha}(x_0,y_0))\\
&\qquad +
\frac{\rand}{\rand x}(\sigma_3 \circ \tilde{\alpha})\eval_{(x_0,y_0)}
\tilde{W}_3(\tilde{\alpha}(x_0,y_0)) + O(r)+O(\delta).  
\end{align*}
Concerning the covariant derivative of $P_1(\cdot) \circ K_1(\cdot)$ in 
direction $\tilde{W}_3$ we have to replace $\frac{\rand}{\rand x}$
by $\frac{\rand}{\rand y}$ in the above formula. Consequently,
from \eqref{eq:14}
\begin{align*}
\sgn \det(D_{\mathcal{Z}_{M}}(P_1(\cdot)\circ K_1(\cdot))\eval_{\tilde{\alpha}(x_0,y_0)}) 
= \deg_{loc}(\nabla k_1,\pi_M(w_0)).
\end{align*}
Thus we arrive at the following
\begin{lemma}
\label{l:k_1_2}
Let $\{\tilde{w}_i\where 1\le i \le n\}$ denote the set of critical points of $k_1$
in $M$. Then there is $r_0>0$ such that
for all $0<r\le r_0$ the set of critical orbits of 
$P_1(\cdot )\circ K_1(\cdot)$ is given by $\{S^1*\tilde{\alpha}_{i,r}\where 1\le i \le n\}$,
where 
\begin{align*}
\tilde{\alpha}_{i,r}= \pi_M(\sqrt{1+r^2}w_{i,r}+r\cos(2\pi t)v_{1,i,r}
+r\sin(2\pi t)v_{0,i,r}) \in \mathcal{Z}_{M}.  
\end{align*}
Moreover, we have for $1\le i\le n$
\begin{align*}
\pi_M(w_{i,r}) \to  \tilde{w}_i \text{ as }r \to 0^+,\\
\sgn \det(D_{\mathcal{Z}_{M}}(P_1(\cdot)\circ K_1(\cdot))\eval_{\tilde{\alpha}_{i,r}}) 
= \deg_{loc}(\nabla k_1,\tilde{w}_i).   
\end{align*}
\end{lemma}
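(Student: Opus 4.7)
The plan is to synthesize the preceding computations: first localize critical orbits via Lemma \ref{l:k_1_1}, then reduce the problem in a neighborhood of each critical point $\tilde{w}_i$ of $k_1$ to a finite-dimensional problem on a two-dimensional transverse slice, and finally identify that slice problem with the Morse gradient of $k_1$ plus a vanishing correction.

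First I would invoke Lemma \ref{l:k_1_1}: fixing $\delta > 0$ small enough (chosen below), for all sufficiently small $r$ any zero orbit of $P_1(\cdot)\circ K_1(\cdot)$ is represented by some $\tilde{\alpha} = \pi_M(\sqrt{1+r^{2}}w + r\cos(2\pi t)v_1 + r\sin(2\pi t)v_0)$ with $\pi_M(w) \in \bigcup_{i=1}^{n} B_\delta(\tilde{w}_i)$. Since by \eqref{eq:param_crit_orbits_hz} the orbits in $\mathcal{Z}_M$ are parametrized by $\pi_M(w) \in M$, it is enough to analyze the vector field separately in each such ball, and no orbit can contribute to two different balls provided $\delta$ is smaller than half the minimal distance between critical points of $k_1$.

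Second, I fix $\tilde{w}_i = \pi_M(w_0)$ and use the injective immersion $(x,y) \mapsto \tilde{\alpha}(x,y)$ from $B_1(0)$ into $\mathcal{Z}_M$ constructed in the excerpt. This immersion is transverse to the $S^{1}$-action and, after shrinking $\delta$, parametrizes all orbits whose center lies in $B_\delta(\tilde{w}_i)$. By \eqref{eq:11}, zeros of $P_1 \circ K_1$ in this chart correspond bijectively to zeros of the map $H(x,y) := (\sigma_2(\tilde{\alpha}(x,y)),\sigma_3(\tilde{\alpha}(x,y)))$ on $B_1(0)$. The expansions \eqref{eq:7} and \eqref{eq:9} together with \eqref{eq:10} show
\begin{align*}
\frac{H(x,y)}{8\pi^{2}r^{3}} = \nabla(k_1 \circ \pi_M \circ w)(x,y) + O(r) \quad \text{in } C^{1},
\end{align*}
so the nondegeneracy of the Hessian of $k_1$ at $\tilde{w}_i$ forces, via the inverse function theorem, the existence of a unique zero $(x_0,y_0)$ of $H$ in $B_1(0)$ for $0 < r \le r_0$, with $\sgn\det(dH|_{(x_0,y_0)}) = \sgn\det(\nabla dk_1|_{\pi_M(w_0)}) = \deg_{loc}(\nabla k_1, \tilde{w}_i)$ as asserted in \eqref{eq:14}. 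The convergence $\pi_M(w_{i,r}) \to \tilde{w}_i$ follows since $(x_0,y_0) \to (0,0)$ as $r,\delta \to 0^+$.

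Finally, I would pass from $\det dH$ to $\det D_{\mathcal{Z}_M}(P_1 \circ K_1)$ using \eqref{eq:13}: in the ordered basis $\{\tilde{W}_2(\tilde{\alpha}(x_0,y_0)), \tilde{W}_3(\tilde{\alpha}(x_0,y_0))\}$ of $\langle \tilde{W}_2,\tilde{W}_3\rangle$ at the critical orbit, the matrix of $D_{\mathcal{Z}_M}(P_1 \circ K_1)$ coincides with $dH|_{(x_0,y_0)}$ up to an $O(r) + O(\delta)$ perturbation, since $\partial_x\tilde{\alpha}$ and $\partial_y\tilde{\alpha}$ agree with $\tilde{W}_2$ and $\tilde{W}_3$ modulo such errors and since $H(x_0,y_0) = 0$ kills the boundary terms. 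The sign of the determinant is therefore preserved for $r, \delta$ small, completing the identification. The main technical delicacy, which I would address by choosing $r_0$ and $\delta$ uniformly across the finite set of critical points of $k_1$ (compactness of $M$), is to ensure that the Morse Hessian dominates the $O(r)$ and $O(\delta)$ error terms simultaneously in all the local charts, so that both the uniqueness of zeros within each ball and the absence of spurious zeros away from the $\tilde{w}_i$ hold with a single threshold $r_0$.
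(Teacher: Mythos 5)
Your proposal is correct and follows essentially the same route as the paper: localize critical orbits via Lemma~\ref{l:k_1_1}, pass to the finite-dimensional map $H(x,y)=(\sigma_2,\sigma_3)$ in the local chart around each $\tilde{w}_i$, read off existence, uniqueness and sign from the expansions \eqref{eq:7}, \eqref{eq:9}, \eqref{eq:13}, \eqref{eq:14}, and return to $D_{\mathcal{Z}_M}(P_1\circ K_1)$ via the identification of $\rand_x\tilde{\alpha},\rand_y\tilde{\alpha}$ with $\tilde{W}_2,\tilde{W}_3$ modulo $O(r)+O(\delta)$. The only minor difference is bookkeeping: the paper obtains uniqueness of the zero in $B_1(0)$ from the combination of $\deg(H,B_1(0),0)=\deg_{loc}(\nabla k_1,\tilde{w}_i)$ and the fact that every zero of $H$ has the same local sign by \eqref{eq:14}, whereas you argue via the inverse function theorem using a uniform $C^1$ closeness of $H/(8\pi^2r^3)$ to $\nabla(k_1\circ\pi_M\circ w)$; both are valid (and your displayed formula omits an inessential factor of $\tfrac{1}{2}$), so this is a presentational rather than substantive difference.
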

\begin{proof}
From Lemma \ref{l:k_1_1} and the analysis of $H$ we may choose $\delta>0$ and $r_0>0$
such that the union $\cup_{i=1}^n B_{\delta}(\tilde{w}_i)$ is disjoint and
for every $i$ and $0<r\le r_0$ 
there is a unique $\pi_M(w_{i,r}) \in B_{\delta}(\tilde{w}_i)$ corresponding
to a critical orbit $S^1*\tilde{\alpha}_{i,r}$. Moreover, 
if $r \to 0^+$ we may shrink $\delta>0$, which yields together with the uniqueness
of $\pi_M(w_{i,r})$ the claimed asymptotic.     
\end{proof}

\begin{lemma}
\label{l:euler_charak_unperturbed}
Let $\mathscr{M}_A$ be the set of oriented Alexandrov embedded regular curves
in $H^{2,2}(S^1,M)$. There is $C_{k_0}>0$ such that for all $k_0\ge C_{k_0}$
we have
\begin{align*}
\chi_{S^1}(X_{k_0,g_0},\mathscr{M}_A)=-\chi(M),  
\end{align*}
where $\chi(M)$ denotes the Euler characteristic of $M$.
\end{lemma}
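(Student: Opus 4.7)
The plan is to use homotopy invariance of the $S^{1}$-degree to reduce the computation of $\chi_{S^{1}}(X_{k_0,g_0},\mathscr{M}_A)$ — where $X_{k_0,g_0}$ has the entire $3$-dimensional manifold $\mathcal{Z}_M$ as its zero set in $\mathscr{M}_A$, and is therefore too degenerate for a direct sum-of-local-degrees formula — to the perturbed field $X_{g_0,\eps}$, whose critical orbits are isolated and nondegenerate for small $\eps>0$ by the analysis of Sections~\ref{sec:unperturbed}--\ref{sec:perturbed}. I would first fix a Morse function $k_1\in C^{\infty}(M,\rz)$, so that by the classical Poincaré--Hopf theorem on $M$ its critical set $\{\tilde w_i\}_{i=1}^{n}$ satisfies $\sum_i\deg_{loc}(\nabla k_1,\tilde w_i)=\chi(M)$, and enlarge $C_{k_0}$ if necessary so that $k_0\ge C_{k_0}$ ensures simultaneously the conclusion of Lemma~\ref{l:alexandrov_soln_M} and the smallness of $r=r(k_0)$ required by Lemma~\ref{l:k_1_2}.

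By Lemma~\ref{l:k_1_2}, the reduced vector field $P_1(\cdot)\circ K_1(\cdot)$ on $\mathcal{Z}_M$ has precisely the $n$ nondegenerate critical orbits $S^{1}*\tilde\alpha_{i,r}$, with signatures matching $\deg_{loc}(\nabla k_1,\tilde w_i)$. Applying Lemma~\ref{l:nondeg_zero_gamma} at each $\tilde\alpha_{i,r}$ would yield, for $0<\eps\le\eps_0$, a unique nondegenerate critical orbit $S^{1}*\tilde\gamma_i(\eps)$ of $X_{g_0,\eps}$ in the corresponding chart $\Phi_i(\mathcal{U}_i)$, with
\begin{align*}
\deg_{loc,S^{1}}(X_{g_0,\eps},S^{1}*\tilde\gamma_i(\eps))=-\deg_{loc}(\nabla k_1,\tilde w_i),
\end{align*}
and these orbits would lie in $\mathscr{M}_A$ for small $\eps$ by the openness statement Lemma~\ref{lem:basis_alexandrov}(2). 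To localize the critical set of $X_{g_0,\eps}$ in a tubular neighborhood $\mathcal{O}$ of $\mathcal{Z}_M$, I would augment $\{\Phi_i(\mathcal{U}_i)\}$ by additional charts from Lemma~\ref{l:implicit_function} into a finite $S^{1}$-invariant open cover of the compact manifold $\mathcal{Z}_M$; on each such auxiliary chart $P_1^{\Phi}\circ K_1^{\Phi}$ has no zero (again by Lemma~\ref{l:k_1_2}), so the expansion of Lemma~\ref{l:expansion_eps_0} combined with the uniqueness statement of Lemma~\ref{l:implicit_function} rules out zeros of $X_{g_0,\eps}$ on that chart for $\eps$ small.

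The hard part will be to verify admissibility of the homotopy $t\mapsto X_{g_0,t\eps}$, $t\in[0,1]$, in the sense of Definition~\ref{d:M_g_t_s_1_homotopy}, namely compactness of $\{(t,\gamma)\in[0,1]\times\mathscr{M}_A:X_{g_0,t\eps}(\gamma)=0\}$, since the endpoint zero set $\{0\}\times\mathcal{Z}_M$ is $3$-dimensional. This splits into two sub-claims. First, for $\eps$ small and uniformly in $t\in[0,1]$, no zeros of $X_{g_0,t\eps}$ lie in $\mathscr{M}_A\setminus\mathcal{O}$: I would prove this by contradiction, extracting from a putative escaping sequence a convergent subsequence in $H^{2,2}$ using the Rothe structure of $X_{g_0,t\eps}$ together with a length/speed bound on constant-speed solutions of the ODE $D_{t,g_0}\dot\gamma=|\dot\gamma|_{g_0}(k_0+t\eps k_1(\gamma))J_{g_0}(\gamma)\dot\gamma$ (and appealing, if needed, to the a priori estimate of Section~\ref{sec:apriori-estimate}), whose limit would be a zero of $X_{k_0,g_0}$ outside $\mathcal{Z}_M$ and so contradict Lemma~\ref{l:alexandrov_soln_M}. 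Second, inside $\mathcal{O}$ the finite-dimensional reduction of Lemma~\ref{l:implicit_function} confines zeros of $X_{g_0,t\eps}$ to perturbations of the $\tilde\alpha_{i,r}$, which form a compact set. Granting admissibility, Lemma~\ref{lem:degree_properties} together with the Poincaré--Hopf identity for $k_1$ gives
\begin{align*}
\chi_{S^{1}}(X_{k_0,g_0},\mathscr{M}_A)=\chi_{S^{1}}(X_{g_0,\eps},\mathscr{M}_A)=\sum_{i=1}^{n}\deg_{loc,S^{1}}(X_{g_0,\eps},S^{1}*\tilde\gamma_i(\eps))=-\chi(M),
\end{align*}
which is the desired formula.
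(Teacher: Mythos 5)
Your proposal follows the same overall strategy as the paper: choose a Morse function $k_1$, invoke Lemma~\ref{l:k_1_2} to locate the $n$ nondegenerate critical orbits of $P_1(\cdot)\circ K_1(\cdot)$ on $\mathcal{Z}_M$, use the perturbation lemma to produce nondegenerate critical orbits $S^1*\tilde\gamma_i(\eps)$ of $X_{g_0,\eps}$, run a contradiction argument to show these are the only zeros near $\mathcal{Z}_M$, and conclude with the classical Poincar\'e--Hopf identity $\sum_i\deg_{loc}(\nabla k_1,\tilde w_i)=\chi(M)$.

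The one place where you make the argument harder than it needs to be is your first sub-claim, namely that for $\eps$ small and uniformly in $t\in[0,1]$, $X_{g_0,t\eps}$ has no zeros anywhere in $\mathscr{M}_A\setminus\mathcal{O}$. This global statement is not required. Since Lemma~\ref{l:alexandrov_soln_M} already gives that the full Alexandrov-embedded zero set of $X_{k_0,g_0}$ is exactly $\mathcal{Z}_M$, the $S^1$-degree can be localized by excision to any open neighborhood $\mathcal{U}\supset\mathcal{Z}_M$: $\chi_{S^1}(X_{k_0,g_0},\mathscr{M}_A)=\chi_{S^1}(X_{k_0,g_0},\mathcal{U})$. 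Homotopy invariance is then applied only over $\mathcal{U}$, and there one only needs to know that the zeros of $X_{g_0,\eps}$ \emph{inside} $\mathcal{U}$ are precisely the $S^1*\tilde\gamma_i(\eps)$ --- which is your second sub-claim and is exactly the paper's contradiction argument using the uniqueness in Lemma~\ref{l:implicit_function} together with Lemma~\ref{l:expansion_eps_0}. What happens to $X_{g_0,\eps}$ far from $\mathcal{Z}_M$ is irrelevant after the excision step, so your proposed compactness argument in $\mathscr{M}_A\setminus\mathcal{O}$ (invoking the Rothe structure and the a priori estimate of Section~\ref{sec:apriori-estimate}) is a detour one can simply skip. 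It is not wrong --- under the hypotheses $k_0\gg1$ it can be pushed through --- but it introduces bookkeeping (in particular checking the hypotheses \eqref{eq:17}--\eqref{eq:18} along $k_0+t\eps k_1$ and making sure the $C^2$-limit stays in $\mathscr{M}_A$ via Lemma~\ref{lem:basis_alexandrov}) that the excision step renders unnecessary.

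Two minor remarks: you cite Lemma~\ref{l:nondeg_zero_gamma} directly where the paper first invokes Lemma~\ref{l:nondeg_zero} in the chart and then translates; this is only a cosmetic difference. And the covering of $\mathcal{Z}_M$ by charts that you set up in the middle paragraph is exactly what the contradiction argument formalizes, so with the excision step in place, that paragraph alone suffices to finish.
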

\begin{proof}
We choose a Morse function $k_1$ on $M$ with nondegenerate critical points
$\{\tilde{w}_i\where 1\le i \le n\}$.
From Lemma \ref{l:k_1_2} we obtain $C_{k_0}>0$ such that for all
$k_0\ge C_{k_0}$ the critical orbits of $P_1(\cdot)\circ K_1(\cdot)$
are given by $\{S^1*\tilde{\alpha}_{i,k_0}\where 1\le i \le n\}$ satisfying
\begin{align*}
\sgn \det(D_{\mathcal{Z}_{M}}(P_1(\cdot)\circ K_1(\cdot))\eval_{\tilde{\alpha}_{i,k_0}}) 
= \deg_{loc}(\nabla k_1,\tilde{w}_i).  
\end{align*}
We fix $k_0\ge C_{k_0}$.
By Lemma \ref{l:nondeg_zero} there is $\eps>0$ such that
for any $0<\eps<\eps_0$ and $1\le i\le n$
there is $\tilde{\gamma}_{i}(\eps) \in \Phi(\mathcal{U}_i)$ satisfying
\begin{align*}
X_{g_0,\eps}(\tilde{\gamma}_{i}(\eps))=0 \text{ and }
\tilde{\gamma}(\eps) \to \tilde{\alpha}_{i,k_0} \text{ as } \eps \to 0.  
\end{align*}
Moreover, $S^1*\tilde{\gamma}_{i}(\eps)$ is the unique critical orbit of $X_{g_0,\eps}$ 
in $\Phi(\mathcal{U}_i)$ and is nondegenerate with
\begin{align}
\label{eq:16}
\deg_{loc, S^{1}}(X_{g_0,\eps},S^1*\tilde{\gamma}(\eps)) &= -\deg_{loc}(\nabla k_1,\tilde{w}_i).  
\end{align}
To show that there is an open neighborhood
$\mathcal{U}$ of $\mathcal{Z}_M$ and $\eps_0>0$
such that for all $0<\eps<\eps_0$ the critical orbits
of $X_{g_0,\eps}$ in $\mathcal{U}$ are given exactly 
by $\{\tilde{\gamma}_i(\eps)\where 1 \le i \le n\}$
we argue by contradiction. Suppose there are $\eps_n \to 0^+$ and
a sequence $(\tilde{\alpha}_n)$ of zeros of $X_{g_0,\eps_n}$ that converges to
$\mathcal{Z}_M$ but $\tilde{\alpha}_n \notin \{\tilde{\gamma}_i(\eps)\where 1 \le i \le n\}$.
Up to a subsequence we may assume
\begin{align*}
\tilde{\alpha}_n \to \tilde{\alpha}_0 \in \mathcal{Z}_M
\end{align*}
as $n \to \infty$. For large $n$ we use the chart $\Phi$ around $\tilde{\alpha}_0$
given in \eqref{eq:15}. 
From the existence of a slice of the $S^{1}$-action (see \cite[Lem. 3.1]{arXiv:0808.4038})
we get sequences $\theta_n \in \rz/\zz$ and  
$V_n \in \langle \tilde{W}_1(\tilde{\alpha}_0)\rangle^\perp$
converging to $0$ such that
\begin{align*}
\theta_n*\tilde{\alpha}_n = \Phi(V_n).
\end{align*}
Note that from the $S^1$-invariance and by construction  
\begin{align*}
X_{g_0,\eps_n}(\theta_n*\tilde{\alpha}_n)=0 \text{ and }
X_{g_0,\eps_n}^{\Phi}(V_n)=0. 
\end{align*}
We consider the map
\begin{align*}
\Lambda:\:
\langle \tilde{W}_1(\tilde{\alpha}_0)\rangle^{\perp}=  
\langle \tilde{W}_2(\tilde{\alpha}_0),\tilde{W}_3(\tilde{\alpha}_0)\rangle 
\oplus R\big(D_{g_0}X_{g_0,0}^{\Phi}\eval_{0}\big)
\to \langle \tilde{W}_1(\tilde{\alpha}_0)\rangle^{\perp},  
\end{align*}
defined by
\begin{align*}
\Lambda(W,V) := W + D_{g_0}X_{g_0,0}^{\Phi}\eval_{W}(V).  
\end{align*}
From \eqref{eq:decomp_phi_w} the map $\Lambda$ is a diffeomorphism 
locally around $(0,0)$, hence 
we may decompose
\begin{align*}
V_n= \Phi^{-1}(\theta_n*\alpha_n)= W_n+U_n,  
\end{align*}
where $W_n \in \langle \tilde{W}_1(\alpha_0)\rangle^\perp$ and 
$U_n \in R(D_{g_0}X_{k_0,g_0}^\Phi\eval_{W_n})$ converge to $0$
as $n \to \infty$.
From the uniqueness part of Lemma \ref{l:implicit_function}, 
as $X_{g_0,\eps_n}^{\Phi}(W_n+U_n)=0$,
we get $U_n= U(\eps_n,W_n)$. By Lemma \ref{l:expansion_eps_0} we see that
\begin{align*}
P_1(\tilde{\alpha}_0)\circ K_1(\tilde{\alpha}_0)=0.  
\end{align*}
Consequently, $S^1*\alpha_0 \in \{S^1*\tilde{\alpha}_{i,k_0}\where 1\le i \le n\}$.
From the uniqueness part in Lemma \ref{l:nondeg_zero} we finally arrive at the contradiction
\begin{align*}
S^1*\tilde{\alpha}_n \in \{S^1*\tilde{\gamma}_{i}(\eps_n)\where 1 \le i \le n\}.  
\end{align*}
From the definition of the $S^1$-equivariant Poincar\'{e}-Hopf index, 
the classification of Alexandrov embedded zeros of $X_{k_0,g_0}$, and \eqref{eq:16}
there holds for small $\eps>0$ 
\begin{align*}
\chi_{S^1}(X_{k_0,g_0},\mathscr{M}_A) &= \chi_{S^1}(X_{k_0,g_0},\mathcal{U})
= \chi_{S^1}(X_{g_0,\eps},\mathcal{U})\\
&= \sum_{i=1}^n \deg_{loc, S^{1}}(X_{g_0,\eps},S^1*\tilde{\gamma}_i(\eps))\\
&= -\sum_{i=1}^n \deg_{loc}(\nabla k_1,\tilde{w}_i) = -\chi(M).  
\end{align*}
\end{proof}

\section{The apriori estimate}
\label{sec:apriori-estimate}
We fix a continuous family of metrics $\{g_t\where t\in [0,1]\}$ on $M$
and a continuous family of positive continuous function $\{k_t \where t\in [0,1]\}$ on $M$.
We assume that there is $K_0>0$, 
such that the Gaussian curvature $K_{g_t}$ of each metric $g_t$ on $M$ 
and the functions $\{k_t\}$ satisfy
\begin{align}
\label{eq:17}
K_{g_t} \ge -K_0,\\  
\label{eq:18}
k_{inf}:= \inf\{k_t(x)\where (x,t)\in M\times [0,1]\}>(K_0)^\frac12.  
\end{align}
We let $X_{t}$ be the vector field on $H^{2,2}(S^1,M)$ defined by
\begin{align*}
X_t:= X_{k_t,g_t}.  
\end{align*}
We denote by $\mathscr{M}_A\subset H^{2,2}(S^1,M)$ the set
\begin{align*}
\mathscr{M}_A:= \{\gamma \in H^{2,2}_{reg}(S^1,M) 
\where \gamma \text{ is prime and oriented Alexandrov embedded.}\}.
\end{align*}
We shall show that the set 
\begin{align*}
X^{-1}(0) := \{(\gamma,t) \in \mathscr{M}_A\times [0,1] \where X_t(\gamma)=0\}   
\end{align*}
is compact in $\mathscr{M}_A\times [0,1]$.
Fix $(\gamma,t)\in X^{-1}(0)$. Then there is an oriented immersion $F:\overline{B}\to M$
with $F\eval_{\rand B}=\gamma$. We denote by $F^*g_t$ the induced metric on $B$.
\begin{lemma}
\label{lem_soln_prescribed_gauss}
For any $(\gamma,t)\in X^{-1}(0)$
there is $\phi \in C^2(\overline{B},\rz)$ satisfying
\begin{align}
\label{eq:19}
-\laplace_{F^*g_t}\phi + K_{F^*g_t}+K_0 e^\phi = 0 \text{ in } B, \notag \\
\rand_{\nu} \phi = 0 \text{ on } \rand B,  
\end{align}
where $\nu$ denotes the unit normal oriented to the outside.\\
Moreover, there is $C_0>0$, which may be chosen independently of $(\gamma,t)\in X^{-1}(0)$,
such that
\begin{align*}
0\ge \phi \ge -C_0.  
\end{align*}
\end{lemma}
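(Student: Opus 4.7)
The equation in \eqref{eq:19} is a prescribed Gaussian curvature equation: a solution $\phi$ produces a conformal metric $\tilde g := e^\phi F^*g_t$ of constant Gaussian curvature $-K_0$, and the Neumann condition encodes the freedom of the conformal factor along $\partial B$. I would organize the proof in three parts: existence, the upper bound, and the uniform lower bound.

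For the \emph{upper bound} the constant function $\bar\phi\equiv 0$ satisfies
\[
-\Delta_{F^*g_t}\bar\phi + K_{F^*g_t} + K_0 e^{\bar\phi} = K_{F^*g_t}+K_0 \ge 0
\]
by the assumption $K_{g_t}\ge -K_0$, so $\bar\phi$ is a supersolution. Since the nonlinearity $s\mapsto K_0 e^s$ is strictly increasing and both $\bar\phi$ and any solution $\phi$ satisfy $\partial_\nu(\,\cdot\,)=0$, the weak comparison principle for $-\Delta + K_0 e^{(\cdot)}$ under Neumann data forces $\phi\le 0$ on $\overline B$.

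For \emph{existence} I would minimize the functional
\[
J(\phi) = \tfrac12\int_B|\nabla\phi|_{F^*g_t}^2\,dV + K_0\int_B e^\phi\,dV + \int_B K_{F^*g_t}\,\phi\,dV
\]
on $H^1(B,F^*g_t)$; its critical points automatically satisfy \eqref{eq:19} together with the natural Neumann boundary condition. Coercivity is obtained by the splitting $\phi=\bar\phi+\psi$ with $\int_B\psi\,dV=0$, Jensen's inequality $\int e^\phi\ge |B|\,e^{\bar\phi}$ to dominate the $\bar\phi\to+\infty$ regime, the Moser--Trudinger inequality to control the zero-mean part, and the Gauss--Bonnet identity $\int_B K_{F^*g_t}\,dV = 2\pi - \int_{\partial B} k_t(\gamma)\,ds$ combined with the hypothesis $k_t>K_0^{1/2}$ to handle the $\bar\phi\to-\infty$ regime. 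Once a weak solution is obtained, standard elliptic regularity lifts it to $C^2(\overline B)$.

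The \emph{uniform lower bound} is the main obstacle. Integrating \eqref{eq:19} over $B$, using the Neumann condition and Gauss--Bonnet, yields
\[
K_0\int_B e^\phi\,dV_{F^*g_t} = -\int_B K_{F^*g_t}\,dV_{F^*g_t} = \int_{\partial B} k_t(\gamma)\,ds - 2\pi,
\]
and this right-hand side is uniformly bounded in $(\gamma,t)\in X^{-1}(0)$ once a uniform length bound for $\gamma$ in $g_t$ is available (which is the natural next step in this section). Combined with $\phi\le 0$, this bounds $\int_B e^\phi\,dV$ uniformly, so the right-hand side $-K_{F^*g_t}-K_0 e^\phi$ of \eqref{eq:19} is uniformly in $L^\infty$. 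A Harnack inequality for the linear operator $-\Delta_{F^*g_t}+K_0$, or equivalently $L^p$-elliptic theory together with two-dimensional Sobolev embedding, then converts the bound on $\int e^\phi$ and on $\phi_+$ into a uniform pointwise lower bound $\phi\ge -C_0$. The delicate issue here is that the metric $F^*g_t$ itself varies with $(\gamma,t)$, so the Harnack and Sobolev constants must be controlled in terms of geometric quantities (area, length of $\partial B$, injectivity radius) that are apriori bounded along $X^{-1}(0)$; ensuring this uniformity is where the real work of the lemma lies.
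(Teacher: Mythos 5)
Your proposal agrees with the paper on the supersolution $\phi_+\equiv 0$, but the remaining parts take a different route, and the step you yourself flag as ``where the real work lies'' is a genuine gap that your plan does not close.

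For the uniform lower bound you propose to (a) bound $\int_B e^\phi\,dV$ via the integrated equation and Gauss--Bonnet, which requires a uniform length bound on $\gamma$, and (b) run Harnack/elliptic estimates on $(B,F^*g_t)$. Step (a) is circular: the length bound \eqref{eq:21} in this section is \emph{derived from} Lemma~\ref{lem_soln_prescribed_gauss} by passing to the conformal metric $h_t=e^\phi F^*g_t$; it is not available beforehand. Step (b) is the deeper problem: the immersion $F$ is not assumed injective, so $(\overline B,F^*g_t)$ can wind around $M$ many times, and its area, diameter, and Neumann Poincar\'e/Sobolev constants are not a priori controlled along $X^{-1}(0)$. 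You acknowledge that these constants ``must be controlled in terms of geometric quantities \ldots that are a priori bounded along $X^{-1}(0)$,'' but those quantities are precisely what this section is trying to bound, with this lemma as the first step. A similar circularity infects your coercivity claim for the minimization: to handle $\bar\phi\to -\infty$ you need $\int_B K_{F^*g_t}\,dV<0$, i.e.\ $\int_{\partial B}k_t(\gamma)\,ds>2\pi$, which again is not known without a length estimate.

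The paper sidesteps all of this by never doing any analysis on $(B,F^*g_t)$ beyond writing down barriers. The subsolution is $\phi_-=\phi_1\circ F-C_1-C_2$, where $\phi_1$ solves the linear Kazdan--Warner problem
\begin{align*}
-\laplace_{g_t}\phi_1 + K_{g_t}-2\pi\chi(M)\,\mathrm{vol}(M,g_t)^{-1}=0 \text{ on }M,\qquad \int_M\phi_1\,dg_t=0,
\end{align*}
on the \emph{fixed compact manifold} $(M,g_t)$. Because $F$ is a local isometry, $\laplace_{F^*g_t}(\phi_1\circ F)=(\laplace_{g_t}\phi_1)\circ F$ and $K_{F^*g_t}=K_{g_t}\circ F$, so $\phi_-$ becomes a subsolution on $B$ after shifting by constants; the constants $C_1=\sup_M|\phi_1|$ and $C_2$ come from elliptic estimates and Gauss--Bonnet on $(M,g_t)$, uniformly over the compact family $\{g_t\}$, and hence are independent of $(\gamma,t)$ and of $F$. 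Existence and the two-sided bound $\phi_-\le\phi\le 0$ then follow from the method of upper and lower solutions. If you want a uniform lower barrier, you should construct it upstairs on $M$ and pull it back through $F$, rather than trying to run uniform elliptic theory on the uncontrolled immersed disk.
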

\begin{proof}
To show the existence of a solution $\phi$ we use the method of upper and lower solutions 
(see also \cite{MR0343205}).
The function $\phi_+ \equiv 0$ satisfies
\begin{align*}
-\laplace_{F^*g_t}\phi_+ + K_{F^*g_t}+K_0 e^{\phi_+} = K_{F^*g_t}+K_0 \ge 0,   
\end{align*}
from \eqref{eq:17} and the fact that $F$ is a local isometry. Hence, $\phi_+$
is a supersolution of \eqref{eq:19}. To find a subsolution, we let $\phi_1 \in C^\infty(M,\rz)$
be defined as the solution to the linear equation
\begin{align*}
-\laplace_{g_t}\phi_1 + K_{g_t}-2\pi \chi(M) \text{vol}(M,g_t)^{-1}= 0 \text{ in }M,\;
\int_M \phi_1 dg_t =0.
\end{align*}
By standard elliptic estimates using a Green's function on $(M,g_t)$ 
(see \cite[Thm 4.13]{MR1636569}) we have
\begin{align*}
\sup_{M}|\phi_1| &\le C(g_t) 
\big(\sup_{M}|K_{g_t}|- 2\pi \chi(M) \text{vol}(M,g_t)^{-1}\big)\\
&\le C_1,
\end{align*}
because $\{g_t\where t\in [0,1]\}$ is a compact set of smooth metrics.
We may choose $C_2>1$ such that we have for all $t\in [0,1]$ 
\begin{align*}
-C_2 \le \ln\big(-2\pi \chi(M)K_0 \text{vol}(M,g_t)^{-1}\big).
\end{align*}
Since $F$ is a local isometry, there holds
\begin{align*}
\laplace_{F^*g_t} (\phi_1 \circ F)= (\laplace_{g_t} \phi_1) \circ F.   
\end{align*}
We define $\phi_- \in C^2(B,\rz)$ by
\begin{align*}
\phi_-:= \phi_1\circ F -C_1-C_2  
\end{align*}
and get
\begin{align*}
-\laplace_{F^*g_t}\phi_- + K_{F^*g_t}+K_0 e^{\phi_-} =
2\pi \chi(M) \text{vol}(M,g_t)^{-1} +K_0 e^{\phi_1 \circ F-C_1-C_2}
\le 0. 
\end{align*}
Hence $\phi_-$ is a subsolution of \eqref{eq:19} satisfying
\begin{align*}
-C_0 := -(2C_1+C_2) \le \phi_-  < \phi_+. 
\end{align*}
Using a version of the method of upper and lower solutions
given in \cite{MR2316146} we find a solution $\phi$ to \eqref{eq:19}
satisfying $\phi_- \le \phi \le \phi_+$.
\end{proof}
We consider $B$ equipped with the metric  $h_t:= e^\phi F^* g_t$.
Then the Gaussian curvature $K_{h_t}$ and the geodesic
curvature $k_{h_t}$ of $\rand B$ with respect to $(B,h_t)$ are given by 
(see \cite[Sec 5.8.2]{MR1636569})
\begin{align*}
K_{h_t} \equiv -K_0 \text { and } k_{h_t} = k_{F^*g_t} e^{-\frac{\phi}{2}}.   
\end{align*}
Consequently, since $0\le \phi$,
\begin{align*}
\inf_{\rand B} k_{h_t} \ge \inf_{\rand B} k_{F^*g_t} 
\ge k_{\inf}.
\end{align*}
The Gauss-Bonnet formula applied to $(B,h_t)$ gives
\begin{align*}
2\pi &= -\int_{B} K_0 \,dh_t +
\int_{\rand B}k_{h_t} \, dS_{h_t}
&\ge
-K_0 A(B,h_t)+k_{inf} L(\rand B, h_t),  
\end{align*}
where $A(B,h_t)$ denotes the area of $B$ and $L(\rand B, h_t)$ the length of $\rand B$
with respect to $h_t$.
The isoperimetric inequality (see \cite[Thm 4.3]{MR0500557}) yields
\begin{align*}
L(\rand B, h_t)^2 \ge 4\pi A(B,h_t) + K_0 A(B,h_t)^2 \ge K_0 A(B,h_t)^2.    
\end{align*}
Thus we arrive at
\begin{align*}
2\pi \ge -(K_0)^{\frac12} L(\rand B, h_t) + k_{inf} L(\rand B, h_t).    
\end{align*}
This yields
\begin{align*}
L(\gamma,g_t) &= L(\rand B,F^* g_t) \le e^{C_0} L(\rand B,h_t)
\le e^{C_0} \frac{2\pi}{k_{inf}- (K_0)^{\frac12}}.     
\end{align*}
Using again the Gauss-Bonnet formula we see
\begin{align*}
2\pi &= -\int_{B} K_0 \,dh_t +
\int_{\rand B}k_{h_t} \, dS_{h_t}\\
&\le e^{\frac{C_0}{2}}\big(\sup\{k_t(x)\where (x,t)\in M\times [0,1]\}\big) 
L(\rand B,h_t)\\
&\le e^{2C_0}\big(\sup\{k_t(x)\where (x,t)\in M\times [0,1]\}\big)
L(\gamma,g_t).  
\end{align*}
Consequently, there is $C>0$, such that
\begin{align}
\label{eq:21}
C \le L(\gamma,g_t) \le C^{-1},  
\end{align}
for all $(\gamma,t) \in X^{-1}(0)$.\\
Fix a sequence $(\gamma_n,t_n)_{n\in \nz}$ in $X^{-1}(0)$. 
As a solution each $\gamma_n$ is parameterized proportional
to its arc-length. From \eqref{eq:21},
$(\gamma_n)$ is uniformly bounded in $C^1(S^1,M)$. Using the equation
\eqref{eq:1} we obtain a uniform bound of $(\gamma_n)$
in $C^3(S^1,M)$, such that we may extract a subsequence, still denoted
by $(\gamma_n,t_n)_{n \in \nz}$, which
converges in $C^2(S^1,M)\times [0,1]$ to $(\gamma_0,t_0)$.
The convergence in $C^2(S^1,M)$ and the lower bound in \eqref{eq:21} imply that
$X_{t_0}(\gamma_0)=0$ and that $\gamma_0$ is an immersion.
By Lemma \ref{lem:basis_alexandrov}
the curve $\gamma_0$ is oriented Alexandrov embedded and hence
$(\gamma_0,t_0)\in X^{-1}(0)$. This shows that
\begin{lemma}
\label{l:apriori-estimate}
Under the assumptions \eqref{eq:17} and \eqref{eq:18}
the set $X^{-1}(0)$ is compact. 
\end{lemma}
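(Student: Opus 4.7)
The plan is to show that every sequence $(\gamma_n,t_n)$ in $X^{-1}(0)$ has a subsequence converging in $\mathscr{M}_A \times [0,1]$ to a point still in $X^{-1}(0)$. The argument rests on two ingredients: uniform two-sided length bounds on the curves $\gamma_n$, and a bootstrap from those bounds to $C^2$-compactness via the ODE \eqref{eq:1}.

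For the length bounds I would exploit the oriented Alexandrov embedding: each $\gamma_n$ extends to an immersion $F_n: \overline{B} \to M$, so one can pull back $g_{t_n}$ to $F_n^* g_{t_n}$ on $B$. Since $F_n$ is a local isometry, $K_{F_n^* g_{t_n}} \geq -K_0$, and I would conformally rescale to uniform constant curvature $-K_0$ by applying Lemma \ref{lem_soln_prescribed_gauss} to obtain a conformal factor $\phi_n$ satisfying $-C_0 \leq \phi_n \leq 0$ with $C_0$ independent of $n$. I expect this uniform sub/supersolution construction to be the main technical obstacle, since the constants produced by the elliptic estimates on $M$ must be controlled uniformly as $t$ varies in the compact parameter interval $[0,1]$.

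On $(B, h_{t_n})$ with $h_{t_n} = e^{\phi_n} F_n^* g_{t_n}$, the curvature is identically $-K_0$ and the boundary geodesic curvature satisfies $k_{h_{t_n}} = e^{-\phi_n/2} k_{F_n^* g_{t_n}} \geq k_{\inf}$. Gauss–Bonnet gives
\begin{equation*}
2\pi = -K_0\, A(B,h_{t_n}) + \int_{\partial B} k_{h_{t_n}}\, dS_{h_{t_n}},
\end{equation*}
and combining this with the isoperimetric inequality $L^2 \geq 4\pi A + K_0 A^2$ for surfaces of curvature $\geq -K_0$ and with the hypothesis $k_{\inf} > \sqrt{K_0}$ yields an upper bound on $L(\partial B, h_{t_n})$, hence on $L(\gamma_n, g_{t_n})$ via $\phi_n \geq -C_0$. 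The reverse bound comes from re-applying Gauss–Bonnet with the upper estimate $k_{h_{t_n}} \leq e^{-\phi_n/2} \sup k_t$, giving a uniform lower bound on $L(\gamma_n, g_{t_n})$.

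Finally, since solutions are parametrized proportionally to arc-length, the two-sided length bound produces a uniform $C^1$ bound, and \eqref{eq:1} then bootstraps to a uniform $C^3$ bound. Arzelà–Ascoli extracts a subsequence converging in $C^2(S^1,M) \times [0,1]$ to $(\gamma_0, t_0)$ with $X_{t_0}(\gamma_0)=0$; the lower length bound forces $\gamma_0$ to be an immersion, and since $k_{t_0}(\gamma_0) > 0$, Lemma \ref{lem:basis_alexandrov}(1) guarantees that the $C^2$-limit of oriented Alexandrov embedded curves is again oriented Alexandrov embedded. Primeness is preserved because any nontrivial isotropy would force $\gamma_0$ to be a multiple cover, contradicting the embedded-disk property established in Lemma \ref{l:alexandrov_soln_hz}-style reasoning applied to the limit. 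Thus $(\gamma_0, t_0) \in X^{-1}(0)$, and compactness follows.
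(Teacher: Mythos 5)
Your proof follows the paper's argument essentially line by line: conformally rescale the pulled-back metric to constant curvature $-K_0$ via Lemma \ref{lem_soln_prescribed_gauss} with uniform $C^0$ bounds, combine Gauss--Bonnet with the isoperimetric inequality and $k_{\inf}>\sqrt{K_0}$ to get the upper length bound, reapply Gauss--Bonnet for the lower bound, then bootstrap the arc-length parametrization through the ODE to $C^3$ and pass to a $C^2$-limit that remains Alexandrov embedded by Lemma \ref{lem:basis_alexandrov}(1). The only difference is your added remark on primeness, which the paper leaves implicit; it is a reasonable observation but as stated (``Lemma \ref{l:alexandrov_soln_hz}-style reasoning applied to the limit'') it is a gesture rather than an argument, and would need the Gauss--Bonnet computation to be carried out on the limiting immersed disk if one wanted to make it precise.
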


\section{Existence results}
\label{sec:existence}
We give the proof of our main existence result.
\begin{proof}[{Proof of Theorem \ref{thm_existence}}]
From the uniformization theorem 
$(M,g)$
is isometric to $(\hz/\Gamma,e^\phi g_0)$,
where $\Gamma \subset O(2,1)_+$ is a group
of isometries acting
freely and properly discontinuously and $\phi$ is a function in
$C^\infty(\hz/\Gamma,\rz)$.
Due to the invariance of \eqref{eq:1} under isometries we may assume
without loss of generality that
\begin{align*}
(M,g) = (\hz/\Gamma,e^\phi g_0). 
\end{align*}
We consider the family of metrics $\{g_t:= e^{t\phi} g_{0} \where t\in [0,1]\}$
and choose a large constant $k_0>>1$, such that
\begin{align*}
k_0 > \big(-\inf\{K_{g_t}(x) \where (x,t) \in M\times [0,1]\}\big)^\frac12
+\inf_{M}k + C_{k_0},  
\end{align*}
where $K_{g_t}$ denotes the Gaussian curvature of the metric $g_t$ given by
\begin{align*}
K_{g_t} &= e^{-t\phi}\big(-t\laplace_{g_0}(\phi)+2\big).
\end{align*}
From Lemma \ref{l:apriori-estimate} the homotopy 
\begin{align*}
[0,1]\ni t\mapsto X_{k_0,g_t}  
\end{align*}
is $(\mathscr{M}_A,g_t,S^1)$-admissible. By Lemma \ref{l:euler_charak_unperturbed}
and the homotopy invariance of the $S^{1}$-equivariant Poincar\'{e}-Hopf index
we obtain 
\begin{align*}
-\chi(M)= \chi_{S^1}(X_{k_0,g_0},\mathscr{M}_A)=\chi_{S^1}(X_{k_0,g},\mathscr{M}_A).  
\end{align*}
For $t\in [0,1]$ we define $k_t \in C^\infty(M,\rz)$ by
\begin{align*}
k_t(x) := (1-t)k_0 + t k(x).  
\end{align*}
Then 
\begin{align*}
\inf\{k_t(x)\where (x,t)\in M\times [0,1]\}= \inf_{M}k > 
\big(-\inf_{M}K_{g}\big)^\frac12.  
\end{align*}
From Lemma \ref{l:apriori-estimate} the homotopy 
\begin{align*}
[0,1]\ni t\mapsto X_{k_t,g}  
\end{align*}
is $(\mathscr{M}_A,g,S^1)$-admissible and there holds
\begin{align*}
\chi_{S^1}(X_{k,g},\mathscr{M}_A)=\chi_{S^1}(X_{k_0,g},\mathscr{M}_A)= -\chi(M).
\end{align*}
This gives the claim.
\end{proof}

\bibliographystyle{plain}
\bibliography{geodesic_curves}

\end{document}